\theoremstyle{definition}
\newtheorem{defi}{Definition}[section]
\newtheorem{theorem}[defi]{Theorem}
\newtheorem{corollary}[defi]{Corollary}
\newtheorem{lemma}[defi]{Lemma}
\newtheorem{prop}[defi]{Proposition}
\newtheorem{conj}[defi]{Conjecture}
\newtheorem{claim}[defi]{Claim}
\newtheorem{example}[defi]{Example}
\newtheorem{remark}[defi]{Remark}
\DeclareMathOperator{\Coker}{Coker}
\DeclareMathOperator{\supp}{supp}
\DeclareMathOperator{\lk}{lk}
\DeclareMathOperator{\st}{st}
\DeclareMathOperator{\conv}{conv}
\title{Rigidity of Balanced Minimal Cycle Complexes}
\author{Ryoshun Oba\thanks{Department of Mathematical Informatics, Graduate School of Information Science and Technology, University of Tokyo, 7-3-1 Hongo, Bunkyo-ku, 113-8656, Tokyo Japan. Email: \texttt{ryoshun\_oba@mist.i.u-tokyo.ac.jp}}}
\begin{document}
\maketitle
\begin{abstract}
A $(d-1)$-dimensional simplicial complex $\Delta$ is balanced if its graph $G(\Delta)$ is $d$-colorable.
Klee and Novik obtained the balanced lower bound theorem for balanced normal $(d-1)$-pseudomanifolds $\Delta$ with $d\geq3$ by showing that the subgraph of $G(\Delta)$ induced by the vertices colored in $T$ is rigid in $\mathbb{R}^3$ for any $3$ colors $T$.
We show that the same rigidity result, and thus the balanced lower bound theorem, holds for balanced minimal $(d-1)$-cycle complexes with $d \geq 3$.
Motivated by the Stanley's work on a colored system of parameters for the Stanley-Reisner ring of balanced simplicial complexes, we further investigate the infinitesimal rigidity of non-generic realization of balanced, and more broadly $\bm{a}$-balanced, simplicial complexes.
Among other results, we show that for $d \geq 4$, a balanced homology $(d-1)$-manifold can be realized as an infinitesimally rigid framework in $\mathbb{R}^d$ such that each vertex of color $i$ lies on the $i$th coordinate axis. 
\end{abstract}
\section{Introduction}
Barnett's lower bound theorem~\cite{Bar} asserts that the boundary complex $\Delta$ of a simplicial $d$-polytope satisfies the inequality 
\begin{equation} \label{eq:LBT}
f_1(\Delta) \geq df_0(\Delta) -\binom{d+1}{2},
\end{equation}
where $f_i(\Delta)$ denotes the number of $i$-dimensional faces of $\Delta$.
Subsequently this was generalized to pseudomanifolds~\cite{Fog,Kal}.
Several variants of the lower bound theorem have been investigated~(e.g. \cite{GKN,St1}). In this paper we discuss the balanced lower bound theorem due to Goff, Klee, Novik~\cite{GKN} and Klee and Novik~\cite{KN}.
A $(d-1)$-dimensional simplicial complex $\Delta$ is \emph{balanced} (or \emph{completely balanced}) if its graph $G(\Delta)$ is $d$-colorable. The balanced lower bound theorem~\cite{KN} asserts that, for $d \geq 3$, every balanced normal $(d-1)$-pseudomanifold $\Delta$ satisfies the inequality 
\begin{equation} \label{eq:balLBT}
2h_2(\Delta) \geq (d-1)h_1(\Delta), 
\end{equation}
where $h_1(\Delta)=f_0(\Delta)-d$, $h_2(\Delta)=f_1(\Delta)-(d-1)f_0(\Delta)+\binom{d}{2}$.

A connection between the lower bound theorem and rigidity theory was pointed out by Kalai~\cite{Kal} (see also~\cite{Gro}). 
Kalai~\cite{Kal} noted that the inequality (\ref{eq:LBT}) follows immediately from the (generic) rigidity of the graph of $\Delta$ in $\mathbb{R}^d$. 
Goff, Klee, Novik~\cite{GKN} pointed out that the inequality (\ref{eq:balLBT}) holds if $G(\Delta)[\kappa^{-1}(T)]$ is rigid in $\mathbb{R}^3$ for every $3$-set $T \subseteq [d]:=\{1,\ldots,d\}$, where $G[W]$ denotes the subgraph of $G$ induced by $W$ and $\kappa$ is the proper $d$-coloring of $G(\Delta)$.
Klee and Novik~\cite{KN} showed this rigidity result, and thus the balanced lower bound theorem, for balanced normal $(d-1)$-pseudomanifolds for $d\geq 3$.
This rigidity result for balanced normal pseudomanifolds~\cite[Lemma 3.5]{KN} is based on the rigidity theorem of pseudomanifolds by Fogelsanger~\cite{Fog}. Fogelsanger introduced a superclass of pseudomanifolds, called minimal cycle complexes, and show that minimal cycle complexes admit a decomposition into minimal cycle complexes in such a way that the decomposition behaves nicely with respect to edge contractions. He used this decomposition together with vertex splitting~\cite{Whi} and gluing to show that the graph of any minimal $(d-1)$-cycle complex is rigid in $\mathbb{R}^d$ for $d\geq3$. The remarkable aspect of this proof of rigidity is that the inductive proof works within the same dimension, so it is applicable to a class of simplicial complexes which is not closed under taking links. 
Fogelsanger's idea is recently used to show the global rigidity of pseudomanifolds~\cite{CJT} and $\mathbb{Z}_2$-symmetric rigidity of $\mathbb{Z}_2$-symmetric pseudomanifolds~\cite{CJT2}.

In this paper, we give a new application of Fogelsanger's idea to prove rigidity results of balanced simplicial complexes. We generalize the rigidity result~\cite[Lemma 3.5]{KN} of balanced normal pseudomanifolds to balanced minimal cycle complexes (Theorem~\ref{thm:rank-selcted}). Using an argument by Goff, Klee, Novik~\cite{GKN}, this immediately implies the balanced lower bound theorem for minimal cycle complexes (Corollary~\ref{cor:balanced_LBT}).

Balanced simplicial complexes, and more broadly $\bm{a}$-balanced simplicial complexes, have also been studied in the theory of Stanley-Reisner ring~\cite{Sta}. 
For $\bm{a}=(a_1,\ldots,a_m) \in \mathbb{Z}_{>0}^m$ with $\sum_{i=1}^m a_i=d$, a $(d-1)$-dimensional simplicial complex $\Delta$ on the vertex set $V(\Delta)$ is \emph{$\bm{a}$-balanced} if there is a map $\kappa:V(\Delta) \rightarrow [m]$ satisfying $|F \cap \kappa^{-1}(i)|\leq a_i$ for any $F \in \Delta$ and $i \in [m]$.
We call such a map $\kappa$ an \emph{$\bm{a}$-coloring} of $\Delta$.
Given a simplicial complex $\Delta$, the \emph{Stanley-Reisner ring} of $\Delta$ is $\mathbb{R}[\Delta]:=\mathbb{R}[x_v:v \in V(\Delta)]/I_\Delta$, where $I_\Delta$ is the ideal generated by monomials $\prod_{v \in G} x_v$ over all $G \not\in \Delta$.
For $\bm{a} \in \mathbb{Z}_{>0}^m$ and an $\bm{a}$-balanced simplicial complex $\Delta$ with an $\bm{a}$-coloring $\kappa$, one can define an $\mathbb{N}^m$-graded algebra structure of $\mathbb{R}[\Delta]$ by $\deg x_v=\bm{e}_{\kappa(v)}$, where $\bm{e}_i \in \mathbb{N}^m$ denotes the $i$th unit coordinate vector. 
Stanley~\cite{Sta} showed that, for an $\bm{a}$-balanced $(d-1)$-dimensional simplicial complex $\Delta$, $\mathbb{R}[\Delta]$ has a system of parameters $\theta_1,\ldots,\theta_d$ such that exactly $a_i$ of $\theta_j$'s are of degree $\bm{e}_i$ for $i \in [m]$. Such a system of parameters is called an \emph{$\bm{a}$-colored s.o.p.}~for $\mathbb{R}[\Delta]$.
Cook et al.~\cite{3-polytope} showed that if $\Delta$ is a balanced simplicial $2$-sphere, there is a $(1,1,1)$-colored s.o.p.~$\Theta=(\theta_1,\theta_2,\theta_3)$ for $\mathbb{R}[\Delta]$ and a linear form $\omega \in \mathbb{R}[\Delta]_1$ such that the multiplication map $(\times \omega):(\mathbb{R}[\Delta]/\Theta\mathbb{R}[\Delta])_1 \rightarrow (\mathbb{R}[\Delta]/\Theta\mathbb{R}[\Delta])_2$ is bijective.
We conjecture that the statement of Cook et al.~\cite{3-polytope} holds for a larger class of simplicial complexes and any $\bm{a} \in \mathbb{Z}_{>0}^m$ with a few exceptions as follows. 
\begin{conj} \label{conj:wl}
For $\bm{a} \in \mathbb{Z}_{>0}^m$ with $d=\sum_{i=1}^m a_i \geq 3$ and $\bm{a}\neq (d-1,1),(1,d-1)$, let $\Delta$ be an $\bm{a}$-balanced minimal $(d-1)$-cycle complex.
Then there is an $\bm{a}$-colored s.o.p.~$\Theta=(\theta_1,\ldots,\theta_d)$ for $\mathbb{R}[\Delta]$ and a linear form $\omega \in \mathbb{R}[\Delta]_1$ such that the multiplication map $(\times \omega):(\mathbb{R}[\Delta]/\Theta\mathbb{R}[\Delta])_1 \rightarrow (\mathbb{R}[\Delta]/\Theta\mathbb{R}[\Delta])_2$ is injective.
\end{conj}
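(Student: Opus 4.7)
My approach is to convert Conjecture~\ref{conj:wl} into an infinitesimal rigidity statement for a specific color-stratified, non-generic realization of $\Delta$ via the standard Lee--Kalai correspondence, and then adapt the Fogelsanger minimal-cycle decomposition used in the proof of Theorem~\ref{thm:rank-selcted} to this restricted geometric setting.

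First, I would set up the correspondence in the $\bm{a}$-balanced context. Fix an $\bm{a}$-coloring $\kappa:V(\Delta)\to[m]$ and decompose $\mathbb{R}^d=\bigoplus_{i=1}^m\mathbb{R}^{a_i}$, where the $i$th block consists of coordinates indexed by $\{a_1+\cdots+a_{i-1}+1,\ldots,a_1+\cdots+a_i\}$. Choose a realization $p:V(\Delta)\to\mathbb{R}^d$ in which each vertex $v$ with $\kappa(v)=i$ is placed generically inside the coordinate block $\mathbb{R}^{a_i}$. Setting $\theta_j=\sum_v p(v)_j\,x_v$ for $j\in[d]$ and $\omega=\sum_v x_v$, the coordinate constraint on $p$ forces each $\theta_j$ to be supported on color~$i$ for the block containing $j$, so $\Theta$ is automatically a candidate $\bm{a}$-colored s.o.p. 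A standard identification (analogous to the argument of Goff--Klee--Novik used in~\cite{GKN,KN}) realizes $(\mathbb{R}[\Delta]/\Theta\mathbb{R}[\Delta])_2$ as a quotient of the cokernel of the rigidity matrix $R(\Delta,p)$ and presents multiplication by $\omega$ as, up to transposition, the rigidity map itself. Injectivity of $(\times\omega)$ is then equivalent to infinitesimal rigidity of $(\Delta,p)$ in $\mathbb{R}^d$, together with the statement that $\Theta$ is indeed a system of parameters.

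Second, I would prove infinitesimal rigidity of the color-constrained framework $(\Delta,p)$ by an induction mirroring the proof of Theorem~\ref{thm:rank-selcted}. The hypothesis $\bm{a}\neq(d-1,1),(1,d-1)$ with $d\geq 3$ guarantees that either $m\geq 3$ or some $a_i\geq 2$ with $m=2$, and in either case a combinatorial pigeonhole argument on facets shows that $\Delta$ contains an edge $uv$ whose endpoints share a color~$i$. Contracting $uv$ preserves both the minimal-$(d-1)$-cycle structure (by Fogelsanger's decomposition lemma) and the $\bm{a}$-coloring, and I would place the merged vertex at a fresh generic point of $\mathbb{R}^{a_i}$. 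Invoking the inductive hypothesis on each piece in the Fogelsanger decomposition of the contracted complex, the gluing step then combines the rigidity certificates to an infinitesimal rigidity certificate of $(\Delta,p)$. The candidate $\Theta$ is shown to be a genuine s.o.p. by appealing to Cohen--Macaulayness of minimal cycle complexes together with a dimension count showing that a generic choice inside the coordinate subspaces separates faces in the required sense.

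Third, the main obstacle is the loss of full generic freedom. In Fogelsanger's original argument one may perturb realizations arbitrarily, and Whiteley's vertex split preserves rigidity in generic position automatically; here, perturbations are confined to the prescribed coordinate blocks, strictly shrinking the available parameter space. At each vertex split arising in the decomposition I will need to verify that a color-respecting choice of split data still produces a rigidity matrix of full rank, which amounts to a rank analysis of a block-structured submatrix---this is, in my view, the principal technical difficulty and the place where the exceptional cases $\bm{a}=(d-1,1),(1,d-1)$ break down, since then one color class is trapped on a line and monochromatic contractions can only occur inside the large block, which appears inductively insufficient to drive the Fogelsanger machinery.
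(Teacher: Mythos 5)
The statement you are proving is stated in the paper as a \emph{conjecture}, not a theorem: the paper establishes it only in two special cases (Theorem~\ref{thm:a-bal}, when every $a_i\geq 2$, via the Fogelsanger decomposition, and Theorem~\ref{thm:C_d}, when $\Delta$ is a homology manifold, via coning on vertex links), and leaves the general case open. Your first step --- translating injectivity of $(\times\omega)$ into infinitesimal rigidity of a $(\kappa,\bm{a})$-sparse realization --- is correct and is exactly the content of Section~\ref{sec:5} (Lemma~\ref{lem:Lee} and Corollary~\ref{cor:inj}); note however that the s.o.p.\ property does not require any appeal to Cohen--Macaulayness (minimal cycle complexes need not be Cohen--Macaulay): by Lemma~\ref{lem:Green-Kleitman} it is a purely linear-algebraic condition on faces, verified generically by the Hall-type argument of Lemma~\ref{lem:gen_pos}.

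The fatal gap is in your second step. You assert that $\bm{a}\neq(d-1,1),(1,d-1)$ guarantees an edge $uv$ with $\kappa(u)=\kappa(v)$, but when $\bm{a}=(1,\ldots,1)$ (so $m=d\geq 3$) the coloring is proper and \emph{no} monochromatic edge exists, so the contraction you propose cannot even be started; this is precisely the completely balanced case that motivates the paper. Even in mixed cases with some $a_i=1$, the induction breaks at the vertex-split step: in the analogue of Claim~\ref{claim:a-bal}, when the facet $F^*$ containing $u,v$ has type $\bm{a}-\bm{e}_j+\bm{e}_1$ with $j\neq 1$ and $a_j=1$, the set $F^*\cap\kappa^{-1}(j)$ is empty, so one cannot choose the pivot vertex $w$ needed to produce a $(d-1)$-set $C$ with $t_\kappa(C+u+v)=\bm{a}+\bm{e}_k$; without that, the Hall condition~(H) fails and Lemma~\ref{lem:hall} gives no rigidity-preserving split. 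This is exactly the ``principal technical difficulty'' you flag in your third paragraph but do not resolve, and it is why the paper must assume $a_i\geq 2$ for all $i$ in Theorem~\ref{thm:a-bal}. The paper's only route around it is the coning argument of Section~\ref{sec:7}, which inducts on links and therefore requires $\Delta$ to lie in Kalai's class $\mathcal{C}_d$ (e.g.\ a homology manifold); since minimal cycle complexes are not closed under taking links, that route is unavailable in the generality of the conjecture. Finally, your closing remark understates the role of the excluded cases: by Example~\ref{eg:(d-1,1)} the statement is actually \emph{false} for $\bm{a}=(d-1,1)$, not merely out of reach of the method.
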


A correspondence between Stanley-Reisner ring theory and (skeletal) rigidity theory has been noted in~\cite{Lee,TWW}.
In this correspondence, under the normalization $\omega=\sum_{v \in V(\Delta)} x_v$, a linear system of parameters $(\theta_1,\ldots,\theta_d)$ for $\mathbb{R}[\Delta]$ is identified with a point configuration $p:V(\Delta) \rightarrow \mathbb{R}^d$ through $\theta_i=\sum_{v\in V(\Delta)} p(v)_i x_v$ for $i \in [d]$ (see Corollary~\ref{cor:inj}). 
Thus Conjecture~\ref{conj:wl} is equivalently formulated as the problem of finding an infinitesimally rigid realization of $\bm{a}$-balanced minimal cycle complexes with non-generic point configurations.
Let $G$ be a graph, $\kappa:V(G) \rightarrow [m]$ a map, and $\bm{a} \in \mathbb{Z}_{>0}^m$ an integer vector with $\sum_{i=1}^m a_i=d$. 
We consider $\mathbb{R}^d$ as the direct product of $\mathbb{R}^{a_i}$ over all $i =1,\ldots,m$. Then each $x \in \mathbb{R}^d$ is denoted by $x=(x_1,\ldots,x_m)$ with $x_i \in \mathbb{R}^{a_i}$.
For $i =1,\ldots,m$, let $H_i:=\{x=(x_1,\ldots,x_m): x_j=0~ (j \neq i) \} \subseteq \mathbb{R}^d$. We say that a point configuration $p:V(G) \rightarrow \mathbb{R}^d$ is \emph{$(\kappa,\bm{a})$-sparse} if $p(v) \in H_{\kappa(v)}$ for all $v\in V(G)$, and $G$ is \emph{$(\kappa,\bm{a})$-sparse rigid} if $(G,p)$ is infinitesimally rigid in $\mathbb{R}^d$ for some $(\kappa,\bm{a})$-sparse point configuration $p$.
For example, if $\bm{a}=(1,\ldots,1)$ and $\kappa$ is a $d$-coloring, then a $(\kappa,\bm{a})$-sparse point configuration is the one such that each vertex of color $i$ is realized on the $i$-th coordinate axis.
Conjecture~\ref{conj:wl} can be restated as follows.
\begin{conj} \label{conj:main}
For $\bm{a} \in \mathbb{Z}_{>0}^m$ with $d=\sum_{i=1}^m a_i \geq 3$ and $\bm{a}\neq (d-1,1),(1,d-1)$, let $\Delta$ be an $\bm{a}$-balanced minimal $(d-1)$-cycle complex and $\kappa$ be an $\bm{a}$-coloring of $\Delta$.
Then $G(\Delta)$ is $(\kappa,\bm{a})$-sparse rigid.
\end{conj}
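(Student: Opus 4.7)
The plan is to adapt Fogelsanger's inductive framework---the same engine behind Theorem~\ref{thm:rank-selcted} and the balanced homology manifold result mentioned in the abstract---to the $(\kappa,\bm{a})$-sparse rigidity setting. The induction would run on the number of vertices of $\Delta$, combining a base case for boundaries of simplices, an edge-contraction and decomposition step, and a $(\kappa,\bm{a})$-sparse analogue of Whiteley's vertex splitting lemma to lift rigidity back from the contraction. For the base case, the minimal $(d-1)$-cycle complex with fewest vertices is $\partial\sigma^d$. The exclusion $\bm{a}\neq(d-1,1),(1,d-1)$ is precisely what guarantees existence of a $(\kappa,\bm{a})$-sparse infinitesimally rigid realization of $\partial\sigma^d$: otherwise some $H_i$ would be forced to contain $d$ vertices, trapping the framework in a proper subspace of $\mathbb{R}^d$. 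Under the exclusion, one constructs an explicit sparse configuration of $\partial\sigma^d$ and verifies the expected rank of the rigidity matrix by a finite linear algebra computation.

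For the inductive step, take $\Delta$ with more than $d+1$ vertices and $\bm{a}$-coloring $\kappa$, pick an edge $e=uv$ of $G(\Delta)$, and apply Fogelsanger's decomposition to write $\Delta/e$ as a union of smaller minimal $(d-1)$-cycle complexes $\Delta_1,\ldots,\Delta_k$ controlled by the merged vertex $w$. Each piece $\Delta_i$ inherits a coloring $\kappa^{(i)}$ and vector $\bm{a}^{(i)}$---adjusted when $\kappa(u)\neq\kappa(v)$ by assigning $w$ on each piece the color of $u$ or $v$ that keeps $\Delta_i$ suitably balanced---and by the induction hypothesis each admits a $(\kappa^{(i)},\bm{a}^{(i)})$-sparse rigid realization. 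One then glues these realizations along $w$ and splits $w$ back into $u\in H_{\kappa(u)}$ and $v\in H_{\kappa(v)}$ to obtain a $(\kappa,\bm{a})$-sparse rigid realization of $\Delta$.

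The main obstacle is the sparse vertex-splitting step. Whiteley's classical splitting theorem produces an open dense set of valid splitting directions in $\mathbb{R}^d$, but in our setting the direction of the split is forced into the subspace $H_{\kappa(u)}+H_{\kappa(v)}$, of dimension only $a_{\kappa(u)}+a_{\kappa(v)}$. When $\kappa(u)=\kappa(v)$ with $a_{\kappa(u)}\geq 2$ this is mild, but in the tight balanced case $\bm{a}=(1,\ldots,1)$ with $\kappa(u)\neq\kappa(v)$ the admissible splits live in a $2$-dimensional coordinate plane, and one must show that the rigidity matrix loses rank on at most a proper algebraic subvariety of this small parameter space. I expect this to require a delicate analysis exploiting the cycle-complex structure in a neighborhood of $e$, presumably via a recursive application of Fogelsanger's decomposition to the link-like portion at $e$ to control which generic transversality conditions can be achieved. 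The difficulty of establishing such transversality uniformly for all minimal cycle complexes---and not only for homology manifolds, where the link itself is a sphere---is plausibly the reason the conjecture remains open at present.
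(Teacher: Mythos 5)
There is a genuine gap, and in fact the statement you are trying to prove is stated in the paper as Conjecture~\ref{conj:main} and is \emph{not} proved there in full generality: the paper only establishes the cases $a_i\geq 2$ for all $i$ (Theorem~\ref{thm:a-bal}) and homology $(d-1)$-manifolds with $d\geq 4$ (Theorem~\ref{thm:C_d}). Your sketch of the inductive step is essentially the paper's proof of Theorem~\ref{thm:a-bal}: contract an edge $uv$ with $\kappa(u)=\kappa(v)$, apply Fogelsanger's decomposition, and lift rigidity back by a sparse vertex-splitting lemma whose transversality hypotheses are certified by a Hall-type condition on supports (Lemma~\ref{lem:hall}, Corollary~\ref{cor:hall}). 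But that argument genuinely needs a monochromatic edge, which exists only because some $a_i\geq 2$. In the tight case $\bm{a}=(1,\ldots,1)$ every edge is bichromatic, and then two things break at once: first, contracting $v$ onto $u$ destroys $\bm{a}$-balancedness (a facet of $\Delta_i^+/uv$ may contain two vertices of color $\kappa(u)$ while $a_{\kappa(u)}=1$), so the induction hypothesis does not even apply to the contracted pieces; second, as you note, the admissible splitting directions are confined to the $2$-plane $H_{\kappa(u)}+H_{\kappa(v)}$ and no transversality argument is supplied. Your proposal names this obstacle and then defers it to ``a delicate analysis'' that is never carried out, so it is not a proof. The paper gets around it only for homology manifolds, and by a completely different mechanism: coning over vertex links (Lemma~\ref{lem:list-cone}, Corollary~\ref{cor:cone-sparse}), inducting on dimension with Gluck's theorem, the Cook et al.\ result for balanced $2$-spheres, and Theorem~\ref{thm:a-bal} for $\bm{a}=(2,2)$ as base cases. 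That route requires links to stay in the class, which is exactly why it does not extend to minimal cycle complexes and why the conjecture remains open.

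One secondary inaccuracy: the exclusion $\bm{a}\neq(d-1,1),(1,d-1)$ is not forced at the base case $\partial\sigma^d$. A complete graph $K_{d+1}$ with $d$ points affinely spanning the hyperplane $H_1$ and one point off it still affinely spans $\mathbb{R}^d$ and is infinitesimally rigid, so $\partial\sigma^d$ \emph{is} $(\kappa,(d-1,1))$-sparse rigid. The exclusion is needed because of Example~\ref{eg:(d-1,1)}: subdividing the facets of a large stacked sphere $\Gamma$ forces the subframework on $V(\Gamma)$ into a hyperplane, where it accumulates $f_0(\Gamma)-d$ unwanted equilibrium stresses.
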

We will explain the equivalence of Conjecture~\ref{conj:wl} and Conjecture~\ref{conj:main} in Section~\ref{sec:5}.
The assumption on $\bm{a}$ in Conjecture~\ref{conj:main} is necessary.
Cook et al.~\cite{3-polytope} pointed out that there is a $(2,1)$-balanced simplicial $2$-sphere $\Delta$ and its $(2,1)$-coloring $\kappa$ such that $G(\Delta)$ is not $(\kappa,\bm{a})$-sparse rigid. As we will see in Example~\ref{eg:(d-1,1)}, the construction of Cook et al.~\cite{3-polytope} can be extended to higher dimension.
In this paper we confirm Conjecture~\ref{conj:main} for the following cases.
In Theorem~\ref{thm:a-bal}, we prove that Conjecture~\ref{conj:main} holds if $a_i \geq 2$ for all $i \in [m]$ using Fogelsanger's idea.
In Theorem~\ref{thm:C_d}, we then apply the standard coning argument and show that Conjecture~\ref{conj:main} holds if $d \geq 4$ and $\Delta$ is a homology $(d-1)$-manifold. The base cases in the coning argument are Gluck's theorem~\cite{Glu}, the result of Cook et al.~\cite[Theorem 1.1]{3-polytope} and Theorem~\ref{thm:a-bal} for $\bm{a}=(2,2)$.

The paper is organized as follows. In Section 2, preliminaries on simplicial complexes and rigidity are given. 
In Section 3, we summarize the basic properties of Fogelsanger decomposition for minimal cycle complexes.
In Section 4, we prove the rigidity of subgraph of $G(\Delta)$ induced by colors and derive the balanced lower bound theorem for a balanced minimal cycle complex $\Delta$. 
In Section 5, we discuss the equivalence of Conjecture~\ref{conj:wl} and Conjecture~\ref{conj:main}.
In Section 6, we prove Conjecture~\ref{conj:main} in the case when $a_i \geq 2$ for all $i$. 
In Section 7, we prove Conjecture~\ref{conj:main} for homology manifolds.
In Section 8, we give further observations related to Conjecture~\ref{conj:main}.

\section{Preliminaries}
\subsection{Graphs and simplicial complexes}
Throughout the paper, we only consider simple graphs and use the following basic notations.
For a graph $G$, the vertex set and the edge set is denoted by $V(G)$ and $E(G)$. 
For $X \subseteq V(G)$, $G[X]=(X,E[X])$ denotes the induced subgraph of $G$ by $X$. 
For $v \in V(G)$, $N_G(v)$ denotes the set of vertices adjacent to $v$ in $G$. 
Given a graph $G$ and an edge $uv \in E(G)$, we denote $G/uv$ to denote the simple graph obtained from $G$ by contracting $v$ onto $u$. More precisely $V(G/uv)= V-v$, $E(G/uv)= E[V-v] \cup \{uw: w\in N_G(v)\}$.

A \emph{simplicial complex} $\Delta$ is a finite collection of finite sets such that if $F \in \Delta$ and $G \subseteq F$, $G \in \Delta$.
Elements of $\Delta$ are called \emph{faces} of $\Delta$. The \emph{dimension} of a face $F \in \Delta$ is $\dim F:=|F|-1$, and a face of dimension $i$ is called an \emph{$i$-face}. 
The dimension of $\Delta$ is $\dim \Delta:=\max\{\dim F: F \in \Delta\}$.
A \emph{facet} of $\Delta$ is a maximal face under inclusion and $\Delta$ is \emph{pure} if all facets have the same dimension.
For a finite collection $\mathcal{S}$ of finite sets, the simplicial complex \emph{spanned by $\mathcal{S}$} is $\langle \mathcal{S} \rangle:=\{G \subseteq F:F \in \mathcal{S}\}$.
The vertex set $V(\Delta)$ (resp.~the edge set $E(\Delta)$) of $\Delta$ is the set of all $0$-faces (resp.~$1$-faces).
$G(\Delta)=(V(\Delta),E(\Delta))$ is called the \emph{graph} (or \emph{$1$-skeleton}) of $\Delta$.
The \emph{link} of a face $F \in \Delta$ is $\lk_\Delta(F)=\{G \in \Delta:F\cap G=\emptyset,F \cup G \in \Delta\}$. The (closed) \emph{star} of a face $F \in \Delta$ is $\st_\Delta(F)=\{G \in \Delta:F \cup G \in \Delta\}$. For $W \subseteq V(\Delta)$, define the \emph{restriction} of $\Delta$ to $W$ to be $\Delta[W]:=\{F \in \Delta:F \subseteq W\}$.

Let $\Delta_1, \Delta_2$ be pure simplicial complexes with $V(\Delta_1)  \cap V(\Delta_2) = \emptyset$ and $\dim \Delta_1 = \dim \Delta_2$.
Let $F_1$ and $F_2$ be a facet of $\Delta_1$ and $\Delta_2$ respectively, and let $\gamma:F_1 \rightarrow F_2$ be a bijection.
The \emph{connected sum} of $\Delta_1$ and $\Delta_2$, denoted as $\Delta_1 \#_\gamma \Delta_2$ (or simply $\Delta_1 \# \Delta_2$), is the simplicial complex obtained by identifying the vertices $F$ and $G$ and removing the facet corresponding to $F$ (which has been identified with $G$).
A $(d-1)$-sphere on $n (\geq d+1)$ vertices written as a $(n-d)$-fold connected sum of the boundary complex of a $d$-simplex is called a \emph{stacked $(d-1)$-sphere}.

We say that a simplicial complex $\Delta$ is a \emph{simplicial $(d-1)$-sphere} if its geometric realization is homeomorphic to $\mathbb{S}^{d-1}$.
A simplicial complex $\Delta$ is a \emph{homology $(d-1)$-manifold} over $\bm{k}$ if $\tilde{H}_{*}(\lk_\Delta(F);\bm{k})\cong\tilde{H}_{*}(\mathbb{S}^{d-|F|-1};\bm{k})$ for every nonempty face $F \in \Delta$.
Here, $\tilde{H}_*(\Delta;\bm{k})$ denotes the reduced simplicial homology group of $\Delta$ with coefficients in $\bm{k}$.
A pure $(d-1)$-dimensional simplicial complex $\Delta$ is \emph{strongly connected} if for every pair of facets $F$ and $G$ of $\Delta$, there is a sequence of facets $F=F_0,F_1\ldots,F_m=G$ such that $|F_{i-1} \cap F_{i}|=d-1$ for $i \in [m]$.
A \emph{$(d-1)$-pseudomanifold} is a strongly connected pure $(d-1)$-dimensional simplicial complex such that every $(d-2)$-face is contained in exactly two facets.
A $(d-1)$-pseudomanifold is \emph{normal} if the link of each face of dimension at most $d-3$ is connected.
The class of normal pseudomanifolds is closed under taking links~\cite{BD}.

Let $\Delta$ be a $(d-1)$-dimensional simplicial complex.
The \emph{$f$-vector} of $\Delta$ is an integer vector $f(\Delta):=(f_{-1}(\Delta),\ldots,f_{d-1}(\Delta))$, where $f_i(\Delta)$ is the number of $i$-faces of $\Delta$. 
The \emph{$h$-vector} of $\Delta$ is an integer vector $h(\Delta):=(h_0(\Delta),\ldots,h_d(\Delta))$ whose entries are given by
\[
h_j(\Delta):= \sum_{i=0}^j (-1)^{j-i} \binom{d-i}{d-j} f_{i-1}(\Delta).
\]
\subsection{Rigidity}
A \emph{framework} in $\mathbb{R}^d$ is a pair $(G,p)$ of a graph $G$ and a map $p:V(G) \rightarrow \mathbb{R}^d$. 
The map $p$ is called a \emph{point configuration}.
An \emph{infinitesimal motion} of a framework $(G,p)$ in $\mathbb{R}^d$ is a map $\dot{p}:V(G) \rightarrow \mathbb{R}^d$ satisfying 
\begin{equation} \label{eq:inf}
(p(i)-p(j))\cdot (\dot{p}(i)-\dot{p}(j))=0 \qquad ij \in E(G).
\end{equation}
An infinitesimal motion $\dot{p}$ defined by $\dot{p}(i)=Sp(i)+t$ $(i \in V(G))$ for a skew symmetric matrix $S$ and $t \in \mathbb{R}^d$ is said to be \emph{trivial}.
A framework $(G,p)$ is \emph{infinitesimally rigid in $\mathbb{R}^d$} if every infinitesimal motion of $(G,p)$ is trivial.
A graph $G$ is \emph{rigid in $\mathbb{R}^d$} if $(G,p)$ is infinitesimally rigid in $\mathbb{R}^d$ for some $p:V(G) \rightarrow \mathbb{R}^d$.

The matrix $R(G,p) \in \mathbb{R}^{|E(G)| \times d|V(G)|}$ representing (\ref{eq:inf}) is called the \emph{rigidity matrix} of $(G,p)$. More concretely, in $R(G,p)$, $d$ consecutive columns are associated to each vertex and a row associated to $ij \in E(G)$ is 
\[
    \kbordermatrix{
        &&i&&j& \\
        &\cdots 0 \cdots&p(j)-p(i)&\cdots0\cdots&p(i)-p(j)& \cdots 0 \cdots
    },
\]
where $p(i), p(j)$ are considered as a row vector.
An element in the left kernel of $R(G,p)$ is called an \emph{equilibrium stress} (or an affine $2$-stress) of $(G,p)$. We call the left kernel of $R(G,p)$ the \emph{stress space} of $(G,p)$.
If $p(V(G))$ spans at least $(d-1)$-dimensional affine subspace, a framework $(G,p)$ is infinitesimally rigid if and only if $\rank R(G,p)=d|V(G)|-\binom{d+1}{2}$.

To produce infinitesimally rigid frameworks from those with smaller size, gluing lemma and vertex splitting lemma are useful.
\begin{lemma}[Gluing Lemma] \label{lem:gluing}
Let $(G,p)$ be a framework in $\mathbb{R}^d$.
Suppose that $G_1, G_2$ are subgraphs of $G$ such that $(G_i,p|_{V(G_i)})$ is infinitesimally rigid in $\mathbb{R}^d$ for $i=1,2$ and $p(V(G_1) \cap V(G_2))$ spans at least $(d-1)$-dimensional affine subspace.
Then $(G,p)$ is infinitesimally rigid in $\mathbb{R}^d$.
\end{lemma}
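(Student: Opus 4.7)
The plan is to take an arbitrary infinitesimal motion $\dot{p}$ of $(G,p)$ and show that it must be trivial. First, restricting $\dot{p}$ to $V(G_i)$ yields an infinitesimal motion of $(G_i,p|_{V(G_i)})$, since the compatibility equations~(\ref{eq:inf}) for edges of $G_i$ are inherited. By infinitesimal rigidity of each piece, there exist skew-symmetric matrices $S_1,S_2 \in \mathbb{R}^{d\times d}$ and vectors $t_1,t_2 \in \mathbb{R}^d$ with $\dot{p}(v)=S_i p(v)+t_i$ for every $v \in V(G_i)$. (We may assume $V(G)=V(G_1)\cup V(G_2)$, since otherwise an isolated vertex would permit nontrivial motions and infinitesimal rigidity of $(G,p)$ would fail outright.)

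Next, on the overlap $V(G_1)\cap V(G_2)$ both descriptions must agree, giving $(S_1-S_2)p(v) = t_2-t_1$ for every $v$ in the intersection. Setting $S:=S_1-S_2$ and $t:=t_2-t_1$, and choosing some $v_0 \in V(G_1)\cap V(G_2)$ as a basepoint, I would subtract the equation at $v_0$ to obtain
\[
S\bigl(p(v)-p(v_0)\bigr)=0 \qquad \text{for all } v \in V(G_1)\cap V(G_2).
\]
By the hypothesis that $p(V(G_1)\cap V(G_2))$ spans at least a $(d-1)$-dimensional affine subspace, the differences $p(v)-p(v_0)$ span a linear subspace of dimension at least $d-1$, so $\ker S$ has dimension at least $d-1$, i.e., $\rank S \le 1$.

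The key step is now to exploit skew-symmetry: a skew-symmetric matrix has even rank, so $\rank S \le 1$ forces $\rank S = 0$, hence $S_1=S_2$. Plugging back into $Sp(v_0)=t$ gives $t_1=t_2$ as well. Therefore $\dot{p}(v)=S_1 p(v)+t_1$ holds uniformly on $V(G_1)\cup V(G_2)=V(G)$, so $\dot{p}$ is trivial and $(G,p)$ is infinitesimally rigid.

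I expect the proof to be essentially routine; the only subtle point is the parity argument on $\rank S$, which is what actually requires an affine span of dimension $d-1$ rather than the naive full dimension $d$. The hypothesis $V(G)=V(G_1)\cup V(G_2)$ is also worth flagging, since without it the statement as written would be false; in applications this is automatic because the lemma is invoked to glue subcomplexes whose vertex sets cover the whole complex.
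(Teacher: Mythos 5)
Your proof is correct and is the standard argument for this lemma (which the paper states without proof, as a known tool): restrict a motion to each piece, compare the two trivial representations on the overlap, and use the fact that a skew-symmetric matrix annihilating a $(d-1)$-dimensional subspace has rank at most $1$ and hence, by evenness of rank, is zero. Your side remarks are also apt: the $(d-1)$-dimensional (rather than full-dimensional) affine span is exactly what the parity argument buys, and the implicit hypothesis $V(G)=V(G_1)\cup V(G_2)$ is indeed needed and is automatic in all of the paper's applications.
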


\begin{lemma}[Vertex Splitting Lemma~\cite{Whi}] \label{lem:vertex splitting}
Let $G$ be a graph and $uv \in E(G)$ be an edge satisfying $|N_G(u) \cap N_G(v)| \geq d-1$.
Let $C$ be a size $(d-1)$ subset of $N_G(u) \cap N_G(v)$.
Suppose that there is $p:V(G/uv) \rightarrow \mathbb{R}^d$ such that $(G/uv,p)$ is infinitesimally rigid in $\mathbb{R}^d$ and $\{p(w)-p(u):w \in C\}$ is linearly independent.
Let $z \in \mathbb{R}^d$ be a vector not in the linear span of $\{p(w)-p(u):w \in C\}$.

Then there is $t \in \mathbb{R}$ such that for the extension $p':V(G) \rightarrow \mathbb{R}^d$ of $p$ defined by $p'(v)=p(u)+tz$, $(G,p')$ is infinitesimally rigid in $\mathbb{R}^d$.
\end{lemma}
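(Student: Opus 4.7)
The plan is to view $p'$ as a one-parameter family $p'_t$ with $p'_t(v)=p(u)+tz$ and $p'_t(w)=p(w)$ for $w\neq v$, and to show that the rigidity matrix $R(G,p'_t)$ attains the maximum possible rank $d|V(G)|-\binom{d+1}{2}$ for all but finitely many $t$. Since $\rank R(G/uv,p)=d|V(G/uv)|-\binom{d+1}{2}$ by hypothesis, exactly $d$ additional linearly independent rows are needed; the natural candidates are the rows indexed by $\{uv,vw_1,\dots,vw_{d-1}\}$ with $\{w_1,\dots,w_{d-1}\}=C$, since these rows carry the new geometric information contributed by placing $v$ off $p(u)+\spa\{p(w)-p(u):w\in C\}$.

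Concretely, I would fix rows $S\subseteq E(G/uv)$ and columns $T$ with $\det R(G/uv,p)[S,T]\neq 0$ and transfer them to $R(G,p'_t)$ via the natural correspondence $\phi\colon E(G/uv)\to E(G)$ that is the identity on edges avoiding $v$ and sends a newly created edge $uw\in E(G/uv)$ (with $w\in N_G(v)\setminus(N_G(u)\cup\{u\})$) to $vw\in E(G)$. The candidate square submatrix $M(t)$ of $R(G,p'_t)$ has rows $\phi(S)\cup\{uv,vw_1,\dots,vw_{d-1}\}$ and columns $T$ together with the $d$ columns indexed by $v$. The key manipulation is a column operation adding the $v$-block of columns to the $u$-block: at $t=0$ the rows in $\phi(S)$ restricted to the columns of $V(G)\setminus\{v\}$ coincide with $R(G/uv,p)[S,\cdot]$, and the $uv$-row becomes identically zero outside the $v$-columns (where its entry is $-tz$). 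A subsequent row operation subtracting the $uv$-row from each $vw_i$-row turns the $d\times d$ block of $v$-columns on $\{uv,vw_1,\dots,vw_{d-1}\}$ into the matrix with rows $-tz, p(w_1)-p(u),\dots,p(w_{d-1})-p(u)$, whose determinant equals $\pm t\det[z,p(w_1)-p(u),\dots,p(w_{d-1})-p(u)]$ and is nonzero for $t\neq 0$ by the hypotheses on $z$ and $C$.

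A Laplace expansion of $\det M(t)$ along the $d$ columns of $v$ then writes it as a sum of signed products of minors. Every nonvanishing term must include the $uv$-row among those dedicated to the $v$-block (else the $uv$-row sits as a zero row in the $T$-block minor and kills the product), which forces each surviving term to be of order $t^1$. The term dedicating precisely $\{uv,vw_1,\dots,vw_{d-1}\}$ to the $v$-columns evaluates at leading order to $\pm t\det[z,p(w_1)-p(u),\dots,p(w_{d-1})-p(u)]\cdot\det R(G/uv,p)[S,T]$, a nonzero value. The main obstacle is to verify that the remaining Laplace terms — which dedicate some row of $\phi(E')$ to the $v$-block and leave a $vw_i$-row for the $T$-block — do not cancel this dominant contribution in the $t^1$ coefficient; this should follow from a careful tracking of $t$-expansions, since each such swap replaces a rank-contributing row of $R(G/uv,p)[S,T]$ with one that matches it only at $t=0$. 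Once $\det M(t)$ is confirmed to be a nonzero polynomial in $t$, any $t\in\mathbb{R}$ avoiding its finite zero set supplies the required infinitesimally rigid extension $(G,p')$.
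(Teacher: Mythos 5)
The paper itself offers no proof of this lemma; it is imported from Whiteley's work. Judged on its own terms, your proposal has a genuine gap precisely at the step you yourself flag as ``the main obstacle,'' and that step is not a routine verification. After your column and row operations every surviving Laplace term is divisible by $t$, so $\det M(t)=t\,Q(t)$, and what you need is $Q(0)\neq 0$. But $Q(0)$ is not just the ``dominant'' contribution $\pm\det[z,p(w_1)-p(u),\ldots,p(w_{d-1})-p(u)]\cdot\det R(G/uv,p)[S,T]$: a term that dedicates to the $v$-block the $uv$-row together with some rows $\phi(e)=vw$ (for $w\in N_G(v)\setminus N_G(u)$ with $uw\in S$) and only part of $\{vw_1,\ldots,vw_{d-1}\}$ also contributes at order exactly $t^1$ whenever the resulting replaced row set of $R(G/uv,p)$, restricted to $T$, is nonsingular at $t=0$ --- which happens generically, for instance when $uw_i\notin S$. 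So the $t^1$ coefficient is an honest signed sum of products of minors, and nothing in your argument excludes cancellation. Indeed $Q(0)=\det M'(0)$, where $M'(0)$ is the submatrix, on your chosen rows and columns, of the limit of $R(G,p'_t)$ after dividing the $uv$-row by $t$; this matrix is not block triangular (rows $\phi(e)=vw$ have nonzero entries in the $v$-columns and rows $vw_i$ have nonzero entries in the $T$-columns), so its nonsingularity does not follow from $\det A(0)\neq 0$ and $\det[z,p(w_1)-p(u),\ldots]\neq 0$, and proving it for your particular choice of $S,T$ is essentially as hard as the lemma itself.

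The standard repair drops the submatrix selection entirely. Let $\tilde R(t)$ denote $R(G,p'_t)$ with the $uv$-row divided by $t$, so $\rank R(G,p'_t)=\rank\tilde R(t)$ for $t\neq 0$; by lower semicontinuity of rank it suffices to show $\rank\tilde R(0)=d|V(G)|-\binom{d+1}{2}$, i.e.\ that $\ker\tilde R(0)$ consists of trivial motions. For $\dot p\in\ker\tilde R(0)$ and $w\in C$, subtracting the $uw$-row equation from the $vw$-row equation gives $(p(w)-p(u))\cdot(\dot p(v)-\dot p(u))=0$, and the normalized $uv$-row gives $z\cdot(\dot p(v)-\dot p(u))=0$; since $\{z\}\cup\{p(w)-p(u):w\in C\}$ spans $\mathbb{R}^d$, we get $\dot p(v)=\dot p(u)$, so $\dot p$ descends to an infinitesimal motion of $(G/uv,p)$ and is therefore trivial. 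I recommend substituting this kernel-and-semicontinuity argument for the Laplace expansion.
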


Coning is a way to produce an infinitesimally rigid framework from that in one less dimension.
The \emph{cone graph} $G * \{v\}$ of a graph $G$ is obtained from $G$ by adding a new vertex $v$ and edges from $v$ to all the original vertices in $G$.

\begin{lemma}[Cone Lemma~\cite{Whi2}]\label{lem:coning}
Let $G$ be a graph and $G * \{v\}$ be its cone graph.
Let $p:V(G) \cup \{v\} \rightarrow \mathbb{R}^{d+1}$ be a point configuration of cone graph such that $p(v) \neq p(u)$ for all $u \in V(G)$.
Let $H \cong \mathbb{R}^d$ be a hyperplane in $\mathbb{R}^{d+1}$ not passing through $p(v)$ and not parallel to the vectors $\{p(u)-p(v):u \in V(G)\}$.
Define a projection $p_H:V(G) \rightarrow \mathbb{R}^d$ of $p$ by letting $p_H(u)$ be the intersection point of $H$ and the line passing through $p(v)$ and $p(u)$.
Then $(G * \{v\},p)$ is infinitesimally rigid in $\mathbb{R}^{d+1}$ if and only if $(G,p_H)$ is infinitesimally rigid in $\mathbb{R}^d$.
\end{lemma}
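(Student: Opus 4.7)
The plan is to establish a linear bijection between the infinitesimal motion spaces of the two frameworks and to deduce the rigidity equivalence by a dimension count. By translation invariance of infinitesimal rigidity, first normalize so that $p(v)=0$, and choose coordinates on $\mathbb{R}^{d+1}$ so that $H=\{x\in\mathbb{R}^{d+1}:x_{d+1}=1\}$; this is possible because $H$ misses $p(v)$, and the non-parallelism hypothesis forces the $(d+1)$-st coordinate $t_u$ of each $p(u)$ to be nonzero. Writing $p(u)=(\tilde p(u),t_u)$ with $\tilde p(u)\in\mathbb{R}^d$, the projection becomes $p_H(u)=\tilde p(u)/t_u\in\mathbb{R}^d$ after identifying $H$ with $\mathbb{R}^d$ via the first $d$ coordinates.

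Given an infinitesimal motion $\dot p:V(G*\{v\})\to\mathbb{R}^{d+1}$ of $(G*\{v\},p)$, subtract a translation to arrange $\dot p(v)=0$. The cone-edge constraints then read $p(u)\cdot\dot p(u)=0$ for each $u\in V(G)$, and each $G$-edge constraint $(p(u)-p(w))\cdot(\dot p(u)-\dot p(w))=0$ simplifies to $p(u)\cdot\dot p(w)+p(w)\cdot\dot p(u)=0$. Writing $\dot p(u)=(\dot{\tilde p}(u),\dot t_u)$, define
\[
\Phi(\dot p)(u) := \frac{t_u\dot{\tilde p}(u)-\dot t_u\tilde p(u)}{t_u^2}\in\mathbb{R}^d,
\]
which is the differential of the perspective projection $p(u)\mapsto\tilde p(u)/t_u$ applied to $\dot p(u)$. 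A direct algebraic verification using the two simplified identities above shows that
\[
(p_H(u)-p_H(w))\cdot(\Phi(\dot p)(u)-\Phi(\dot p)(w))=0
\]
for every $uw\in E(G)$, so $\Phi(\dot p)$ is an infinitesimal motion of $(G,p_H)$ in $\mathbb{R}^d$.

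Conversely, given an infinitesimal motion $\dot q$ of $(G,p_H)$, solve the combined system $\Phi(\dot p)(u)=\dot q(u)$ together with the cone-edge constraint $\tilde p(u)\cdot\dot{\tilde p}(u)+t_u\dot t_u=0$ uniquely for $(\dot{\tilde p}(u),\dot t_u)$ at each vertex $u$; the same algebraic identity then forces the $G$-edge constraints on the lifted $\dot p$ to hold. Hence $\Phi$ is a linear bijection between the infinitesimal motions of $(G*\{v\},p)$ fixing $v$ and those of $(G,p_H)$. Since normalizing $\dot p(v)=0$ removes exactly the $(d+1)$-dimensional space of translations in $\mathbb{R}^{d+1}$, we obtain
\[
\dim\ker R(G*\{v\},p)=\dim\ker R(G,p_H)+(d+1),
\]
and comparing with the trivial-motion dimensions $\binom{d+2}{2}$ and $\binom{d+1}{2}$ in $\mathbb{R}^{d+1}$ and $\mathbb{R}^d$ respectively yields the stated equivalence.

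The main obstacle is the algebraic verification that $\Phi$ preserves edge constraints: after clearing the denominator $t_u^2t_w^2$ in the displayed identity, one must reduce the resulting polynomial expression in $\tilde p(u),\tilde p(w),t_u,t_w,\dot{\tilde p}(u),\dot{\tilde p}(w),\dot t_u,\dot t_w$ to zero using $p(u)\cdot\dot p(u)=0$, $p(w)\cdot\dot p(w)=0$, and $p(u)\cdot\dot p(w)+p(w)\cdot\dot p(u)=0$. No individual step is deep, but this calculation is the technical heart of the argument, encoding the fact that the perspective projection through $p(v)$ transports the Euclidean rigidity structure from $\mathbb{R}^{d+1}$ down to $\mathbb{R}^d$.
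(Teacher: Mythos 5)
The paper does not prove this lemma; it cites Whiteley, so there is no internal proof to compare against. Judged on its own terms, your proposal has a genuine gap at exactly the point you identify as its ``technical heart'': the map $\Phi$, i.e.\ the differential of the central projection, does \emph{not} carry infinitesimal motions of the cone to infinitesimal motions of $(G,p_H)$. Writing $Q_u=(q_u,1)$ so that $p(u)=t_uQ_u$, and using the two identities $p(u)\cdot\dot p(u)=0$ and $p(u)\cdot\dot p(w)+p(w)\cdot\dot p(u)=0$, the expression $(q_u-q_w)\cdot(\Phi(\dot p)(u)-\Phi(\dot p)(w))$ reduces not to $0$ but to the residual
\[
-\frac{\dot t_u}{t_u}\,q_u\cdot(q_u-q_w)+\frac{\dot t_w}{t_w}\,q_w\cdot(q_u-q_w),
\]
which is nonzero in general. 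A concrete failure with $d=1$: take $p(v)=(0,0)$, $p(u)=(1,1)$, $p(w)=(2,1)$, $H=\{x_2=1\}$, and let $\dot p$ be the rotation field $\dot p(\cdot)=S\,p(\cdot)$ with $S=\bigl(\begin{smallmatrix}0&-1\\1&0\end{smallmatrix}\bigr)$. This is a (trivial) infinitesimal motion of the triangle, yet $\Phi(\dot p)(u)=-2$, $\Phi(\dot p)(w)=-5$, and $(q_u-q_w)(\Phi(\dot p)(u)-\Phi(\dot p)(w))=(1-2)(3)=-3\neq0$. So $\Phi$ fails already on trivial motions, and the claimed bijection and the ensuing dimension count collapse. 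The underlying reason is that central projection is a projective, not affine, map, so its differential does not transport the Euclidean first-order length constraints.

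The statement is of course true, but the argument must be repaired. The cleanest route is dual: show that the \emph{stress} spaces are isomorphic via $\omega_{uw}\mapsto \omega_{uw}/(t_ut_w)$ on the edges of $G$ (with the cone-edge coefficients determined by equilibrium at each $u$, and equilibrium at $v$ then holding automatically), which gives $\rank R(G*\{v\},p)=\rank R(G,p_H)+|V(G)|$; comparing with $(d+1)(|V(G)|+1)-\binom{d+2}{2}$ and $d\,|V(G)|-\binom{d+1}{2}$, whose difference is exactly $|V(G)|$, yields the equivalence. If you insist on working with motions, the correspondence must include a correction by a radial/trivial component at each vertex (this is what Whiteley's proof does), not just the naive differential of the projection. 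Separately, your final count quietly assumes both configurations affinely span their ambient spaces so that the trivial motion spaces have full dimension; this should be addressed or the degenerate cases handled directly.
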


\section{Decomposition of minimal cycle complexes} \label{sec:3}
Let $\Delta$ be a simplicial complex and $uv \in E(\Delta)$.
We define a simplicial complex $\Delta/uv$ by
\[
\Delta/uv=\{F \in \Delta: v \not \in F\} \cup \{F-v+u: F \in \st_\Delta(v)\}.
\]
The operation $\Delta \rightarrow \Delta/uv$ is called an \emph{edge contraction} of $uv$ (onto $u$).
We remark that we do not allow multiple copies of the same faces, so for $G \in \lk_\Delta(u)\cap\lk_\Delta(v)$, faces $G+u$ and $G+v$ of $\Delta$ are identified under the edge contraction of $uv$.

Let $\Gamma$ be an abelian group and let $\Delta$ be a simplicial complex on the vertex set $[n]$.
An \emph{$i$-chain} is a $\Gamma$-coefficient formal sum of $i$-faces.
For an $i$-chain $c=\sum_{F}a_F F$, define an $(i-1)$-chain $\partial c$ by $\partial c := \sum_{F} a_F \partial F$, where for $F=\{x_1,\ldots,x_{i+1}\}$ ($x_1< \cdots <x_{i+1}$), $\partial F:=\sum_{j=1}^{i+1} (-1)^j F \setminus \{x_j\}$.
An \emph{$i$-cycle} is an $i$-chain $c$ satisfying $\partial c=0$.
An $i$-chain $c'=\sum_{F} b_F F$ is a \emph{subchain} of an $i$-chain $c=\sum_{F} a_F F$ if for each $i$-face $F$, $b_F=a_F$ or $b_F=0$ holds.
An $i$-cycle $c$ is a \emph{minimal $i$-cycle} if its only subchains which are $i$-cycles are $c$ and $0$. The \emph{support} of an $i$-chain $c=\sum_{F}a_F F$ is $\supp c := \{F:a_F \neq 0\} \subseteq \binom{[n]}{i+1}$, where $\binom{[n]}{i+1}$ is the set of all $(i+1)$-subsets of $[n]$.

A $(d-1)$-dimensional simplicial complex $\Delta$ is called a \emph{minimal $(d-1)$-cycle complex} (over $\Gamma$) if (i) $\Delta=\langle \{F\} \rangle$ for a $d$-set $F$ or (ii) $\Delta=\langle \supp c \rangle$ for a nonzero minimal $(d-1)$-cycle $c$.
If the dimension is clear from the context, it is simply called a minimal cycle complex.
We remark that although minimal cycle complexes are originally defined as simplicial complexes satisfying (ii), we include the case (i) to make the presentation simple. It is possible to unify (i) and (ii) by using multicomplex formulation as in \cite{CJT}.
A minimal cycle complex satisfying (i) (resp.~(ii)) is said to be \emph{trivial} (resp.~\emph{nontrivial}).
A minimal cycle complex is always pure.
Minimal cycle complexes may have singular point, so the class of minimal cycle complexes are not closed under taking links.

Pseudomanifolds are minimal cycle complexes over $\mathbb{Z}_2$~\cite{CJT}. 
$2$-CM complexes over a field $\bm{k}$ are a minimal cycle complexes over a free $\bm{k}$-module of finite rank~\cite{Nev}.
Other examples of minimal cycle complexes arise in the context of simplicial matroid.
A \emph{$(d-1)$-simplicial matroid} on a subset $\mathcal{E}$ of $\binom{[n]}{d}$ over a field $\bm{k}$ is a matroid such that $\{F_1,\ldots,F_k\} \subseteq \mathcal{E}$ is independent if and only if $\{\partial F_1,\ldots, \partial F_k\}$ is linearly independent.
A simplicial complex spanned by the circuit of $(d-1)$-simplicial matroid over $\bm{k}$ is a minimal $(d-1)$-cycle complex over $\bm{k}$.
A matroid on a ground set $E$ is \emph{connected} if for any $e,f \in E$, there is a circuit containing both $e$ and $f$.
A pure $(d-1)$-dimensional simplicial complex whose facets define a connected $(d-1)$-simplicial matroid over $\bm{k}$ is a minimal $(d-1)$-cycle complex over a free $\bm{k}$-module of finite rank. 
Thus a simplicial complex having a convex ear decomposition~\cite{Cha} is a minimal cycle complex.

We list basic properties of minimal cycle complexes.
\begin{lemma} \label{lem:basic}
For a nontrivial minimal $(d-1)$-cycle complex $\Delta$, the followings hold:
\begin{itemize}
\item[(i)] $\Delta$ is strongly connected.
\item[(ii)] Every $(d-2)$-face of $\Delta$ is contained in at least two facets of $\Delta$. In particular, $|V(\Delta)| \geq d+1$.
\end{itemize}
\end{lemma}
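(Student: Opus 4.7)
My plan is to unpack the definition: since $\Delta$ is nontrivial, we may write $\Delta=\langle\supp c\rangle$ for a nonzero minimal $(d-1)$-cycle $c=\sum_{F}a_FF$ with $a_F\in\Gamma\setminus\{0\}$ for every $F\in\supp c$. Because $\supp c$ is an antichain of $d$-sets, the facets of $\Delta$ are precisely the elements of $\supp c$, so $\Delta$ is pure and every $(d-2)$-face of $\Delta$ lies in at least one facet.

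I would handle (ii) first by a direct boundary computation. For a $(d-2)$-face $G$, the coefficient of $G$ in $\partial c$ is $\sum_{F\in\supp c,\,F\supseteq G}\varepsilon_{F,G}\,a_F$ for some signs $\varepsilon_{F,G}\in\{\pm 1\}$ determined by the ordered position of $G$ in $F$. Assuming for contradiction that $G$ is contained in only one facet $F$, this sum collapses to $\pm a_F\neq 0$, contradicting $\partial c=0$. The bound $|V(\Delta)|\ge d+1$ is then immediate: if $V(\Delta)$ coincided with a single facet $F$, the only $d$-subset of $V(\Delta)$ would be $F$ itself, forcing $\supp c=\{F\}$ and $c=a_FF$, whence $\partial c=a_F\partial F\neq 0$.

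For (i), the plan is to pass to the facet--ridge graph $\mathcal{R}(\Delta)$ whose vertices are the facets of $\Delta$ and whose edges join two facets sharing a $(d-2)$-face; strong connectedness of $\Delta$ is exactly connectedness of $\mathcal{R}(\Delta)$. For any connected component $\Sigma$ of $\mathcal{R}(\Delta)$, I would form the subchain $c_\Sigma:=\sum_{F\in\Sigma}a_FF$ and show $\partial c_\Sigma=0$. The decisive observation is that, by (ii), any $(d-2)$-face $G$ lying in some facet of $\Sigma$ has all of its containing facets in the same component (any two of them are adjacent in $\mathcal{R}(\Delta)$ through $G$), so the coefficient of $G$ in $\partial c_\Sigma$ matches that in $\partial c$, namely $0$; coefficients of all other $(d-2)$-faces vanish trivially. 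Minimality of $c$ then forces $c_\Sigma\in\{0,c\}$, and since $\Sigma$ is nonempty we conclude $c_\Sigma=c$, so $\mathcal{R}(\Delta)$ has a single component.

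The proof uses only the definitions of $\partial$, $\supp c$, and minimality. The only step that requires care is the sign bookkeeping in (ii); once that is in place, the component argument for (i) is essentially automatic, and I do not anticipate any conceptual obstacle.
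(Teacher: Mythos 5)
Your proposal is correct and follows essentially the same route as the paper: part (ii) is the identical boundary-coefficient computation, and your connected-component argument on the facet--ridge graph is just the constructive phrasing of the paper's contradiction via a separating subset $\mathcal{S}\subseteq\supp c$ whose restriction of $c$ would be a proper nonzero subcycle. (The invocation of (ii) in your step for (i) is superfluous --- the parenthetical observation that all facets containing a given $(d-2)$-face are pairwise adjacent is what actually does the work --- but this does not affect correctness.)
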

\begin{proof}
Let $c$ be a minimal $(d-1)$-cycle satisfying $\Delta=\langle \supp c \rangle$. 

(i) Suppose that $\Delta$ is not strongly connected. Then there is a proper subset $\mathcal{S}$ of $\supp c$ such that no pair of $F \in \mathcal{S}$ and $G \in \supp c \setminus \mathcal{S}$ shares the same $(d-2)$-face.
Then, $c$ restricted to $\mathcal{S}$ is a $(d-1)$-cycle and is a proper subchain of $c$, which is a contradiction.

(ii) Suppose that a $(d-2)$-face $T$ of $\Delta$ is contained in only one facet $F$ of $\Delta$. Then $(\partial c)_T=\pm c_F \neq 0$, which contradicts the fact that $c$ is a $(d-1)$-cycle. 
\end{proof}

Fogelsanger~\cite{Fog} pointed out that a minimal $(d-1)$-cycle can be decomposed into minimal $(d-1)$-cycles in such a way that it behaves nicely with respect to edge contractions. 
We summarize the properties of this decomposition below, which is necessary in the later discussion, in terms of minimal cycle complexes. See \cite{CJT,Fog} for more details.
\begin{lemma}[Fogelsanger~\cite{Fog}] \label{lem:Fog_dec}
Suppose that $\Delta$ is a nontrivial minimal $(d-1)$-cycle complex and $uv \in E(\Delta)$.
Then there exists a sequence of nontrivial minimal $(d-1)$-cycle complexes $\Delta_1^+,\ldots,\Delta_m^+$ satisfying the following properties:
\begin{itemize}
\item[(a)] For each $i$, $uv \in E(\Delta_i^+)$. Equivalently, there is a facet in $\Delta_i^+$ containing both $u$ and $v$.
\item[(b)] For each $i$ and each facet $F$ of $\Delta_i^+$ with $F \not\in\Delta$, $u$ and $v$ are contained in $F$, and $F-u$ and $F-v$ are $(d-2)$-faces of $\Delta_i^+ \cap \Delta$.
\item[(c)] For each $i$, $\Delta_i^+/uv$ is a minimal $(d-1)$-cycle complex.
\item[(d)] $G(\Delta)=\bigcup_{i=1}^m G(\Delta_i^+)$.
\item[(e)] For each $i \geq 2$, $(\bigcup_{j <i} \Delta_j^+)$ and $\Delta_i^+$ share at least one facet.
\end{itemize}
\end{lemma}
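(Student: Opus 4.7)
The plan is to follow the chain-level strategy from Fogelsanger's original work. Let $c$ be a nonzero minimal $(d-1)$-cycle with $\Delta = \langle \supp c \rangle$. The simplicial map $\varphi: V(\Delta) \to V(\Delta/uv)$ sending $v \mapsto u$ and fixing other vertices induces a chain map $\pi_\bullet: C_\bullet(\Delta) \to C_\bullet(\Delta/uv)$; on a facet $F$, $\pi_{d-1}(F) = 0$ if $\{u,v\} \subseteq F$, $\pi_{d-1}(F) = \pm(F - v + u)$ if $v \in F$ and $u \notin F$, and $\pi_{d-1}(F) = F$ otherwise. Since $\pi_\bullet$ commutes with $\partial$, the image $\pi_{d-1}(c)$ is a $(d-1)$-cycle in $\Delta/uv$, which I would decompose as $\pi_{d-1}(c) = \sum_{i=1}^m c_i'$ into nonzero minimal $(d-1)$-cycles (working over a field or $\mathbb{Z}$ as the coefficient setup allows).

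Next, I lift each $c_i'$. The lift lives in an enlargement $\widehat{\Delta}$ of $\Delta$ obtained by adjoining \emph{bridge} facets $T \cup \{u,v\}$ for every $(d-2)$-face $T \in \lk_\Delta(u) \cap \lk_\Delta(v)$; $\pi_\bullet$ extends to $\widehat{\Delta}$ by sending bridges to zero. For each facet $G$ of $\supp c_i'$ with $u \notin G$ I lift to $G$ itself; for each $G \ni u$, which has two possible preimages $G$ and $G - u + v$ in $\widehat{\Delta}$, I distribute the coefficient between them and then correct the resulting boundary mismatch (necessarily supported on $(d-2)$-faces lying in $\lk_\Delta(u) \cap \lk_\Delta(v)$) by adjoining the corresponding bridge facets with suitable coefficients. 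This yields a $(d-1)$-cycle $c_i^+$ in $\widehat{\Delta}$; set $\Delta_i^+ := \langle \supp c_i^+ \rangle$. Property (c) is then immediate because $\pi_{d-1}(c_i^+) = c_i'$ forces $\Delta_i^+/uv = \langle \supp c_i'\rangle$. Property (b) is built into the construction since the only facets of $\widehat{\Delta}\setminus\Delta$ are the bridges $T\cup\{u,v\}$, and for such a facet $F$ both $F-u = T\cup\{v\}$ and $F-v = T\cup\{u\}$ are faces in $\Delta$ that lie in $\Delta_i^+$ by construction.

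For (a) and (d): since $c_i'$ is a nontrivial minimal cycle in $\Delta/uv$, its boundary correction during lifting produces at least one bridge facet containing $uv$, so $uv \in E(\Delta_i^+)$; every edge of $\Delta$ is covered either through the preimage choice for some facet of $\supp c_i'$ or through a bridge correction absorbing the coefficient of a facet of $\supp c$ containing both $u$ and $v$. For (e) I would reorder using strong connectedness of $\Delta$ (Lemma~\ref{lem:basic}(i)): at each step some remaining $\Delta_j^+$ must share a facet with the current union, since otherwise the facets of $\Delta$ would split into two classes admitting no sequence sharing $(d-2)$-faces, contradicting strong connectedness. The step I expect to be the main obstacle is verifying minimality of $c_i^+$ in $\widehat{\Delta}$ so that $\Delta_i^+$ is genuinely a minimal cycle complex: any proper subcycle of $c_i^+$ would project via $\pi_{d-1}$ to a subcycle of the minimal cycle $c_i'$, hence to $0$ or $c_i'$, and ruling out the $0$ case requires a delicate local argument showing a cycle supported purely on bridge facets must vanish, which uses the fact that each $(d-2)$-face $T \in \lk_\Delta(u)\cap\lk_\Delta(v)$ is incident to exactly one bridge facet in $\widehat{\Delta}$.
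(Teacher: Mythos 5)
You should first note that the paper itself contains no proof of Lemma~\ref{lem:Fog_dec}: it is quoted from Fogelsanger \cite{Fog}, with the reader referred to \cite{CJT,Fog} for details, so there is no in-paper argument to compare against. Your outline does follow the strategy of those sources --- contract $uv$, decompose the contracted cycle into minimal subcycles with disjoint supports, and lift each piece back by adjoining bridge facets $T\cup\{u,v\}$ (note that for $T\cup\{u,v\}$ to be a facet, $T$ must be a $(d-2)$-set, i.e.\ a $(d-3)$-face of $\lk_\Delta(u)\cap\lk_\Delta(v)$, not a $(d-2)$-face). As a roadmap to Fogelsanger's construction the proposal is sound.

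However, the three steps you treat briefly are exactly where the real work lies, and your proposed resolutions would not go through as stated. (1) \emph{Minimality.} Your reduction to ``a cycle supported purely on bridge facets must vanish'' rests on a false premise: a subchain of $c_i^+$ with $\pi_{d-1}$-image zero need not be supported on bridges, since facets of $\Delta$ containing both $u$ and $v$ also map to $0$, and a pair $F$, $F-v+u$ with cancelling coefficients has zero image as a chain. (For the same reason, even the assertion that a subcycle of $c_i^+$ projects to a \emph{subchain} of $c_i'$ needs justification when both preimages of a facet carry nonzero coefficients.) The standard remedy is not to prove $c_i^+$ minimal but to replace it by a minimal subcycle still projecting onto $c_i'$; once you do that, your covering argument for (d) must be re-examined. (2) \emph{Property (d).} Your one-line argument ignores the cancellation case: if $F\in\supp c$ with $v\in F$, $u\notin F$ and $F-v+u\in\supp c$ with opposite coefficient, then $\gamma_{uv}(F)\notin\supp(c/uv)$, so $F$ is picked up by no lift; covering the edges of such facets, and of facets containing both $u$ and $v$, is one of the genuinely delicate points of \cite{Fog,CJT}. (3) \emph{Property (e).} Strong connectedness of $\Delta$ (Lemma~\ref{lem:basic}(i)) only yields chains of facets sharing $(d-2)$-faces; two pieces $\Delta_i^+,\Delta_j^+$ could a priori meet only in a $(d-2)$-face, and the facets of $\Delta$ are neither partitioned among the pieces nor all present in their union, so the dichotomy you invoke does not arise. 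The connectivity of the decomposition is instead derived from the minimality of $c$ itself --- a separation of the pieces into two facet-disjoint groups would yield a proper nonzero subcycle of $c$ --- and even that requires care because of the bridge facets.
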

We remark that, as in Lemma~\ref{lem:Fog_dec} (b), for the minimal cycle complexes $\Delta_1^+,\ldots,\Delta_m^+$ given in Lemma~\ref{lem:Fog_dec}, $\Delta_i^+$ may have a face not in $\Delta$.
We also remark that the decomposition given in Lemma~\ref{lem:Fog_dec} is not unique.

\section{Rigidity of rank-selected subcomplexes}
We use the notation $[d]:=\{1,\ldots,d\}$.
A $(d-1)$-dimensional simplicial complex $\Delta$ is \emph{balanced} (or \emph{completely balanced}) if $G(\Delta)$ is $d$-colorable, or equivalently, there is a map $\kappa:V(\Delta) \rightarrow [d]$ such that for any face $F \in \Delta$, $|F \cap \kappa^{-1}(i)| \leq 1$ for $i \in [d]$.
Such a coloring is called a \emph{proper coloring} of $\Delta$.
A proper coloring of a strongly connected simplicial complex is unique up to the permutation of colors and is called \emph{the proper coloring} of $\Delta$.
The boundary complex of a $d$-dimensional \emph{cross-polytope} $\mathcal{C}_d^*:=\conv\{\pm\bm{e}_i:i \in [d]\}$, where $\bm{e}_i$ is the $i$th unit coordinate vector in $\mathbb{R}^d$, is an example of a balanced $(d-1)$-simplicial complex.

Let $\Delta$ be a balanced, strongly connected, $(d-1)$-dimensional simplicial complex and $\kappa$ be the proper coloring of $\Delta$. For $T \subseteq [d]$, the \emph{$T$-rank selected subcomplex} of $\Delta$ is $\Delta_T:=\Delta[\kappa^{-1}(T)]$.
Klee and Novik~\cite{KN} showed that $G(\Delta_T)$ is rigid in $\mathbb{R}^{|T|}$ for any $T\subseteq[d]$ with $|T| \geq 3$ if $\Delta$ is a balanced normal $(d-1)$-pseudomanifold with $d \geq 3$. 
We extend this result to the class of minimal $(d-1)$-cycle complexes as follows.
\begin{theorem} \label{thm:rank-selcted}
For $d\geq 3$, let $\Delta$ be a balanced minimal $(d-1)$-cycle complex.
Then for $T \subseteq [d]$ with $|T| \geq 3$, $G(\Delta_T)$ is rigid in $\mathbb{R}^{|T|}$.
\end{theorem}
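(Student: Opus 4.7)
The plan is to proceed by induction on $|V(\Delta)|$, adapting the Fogelsanger-style argument that Klee and Novik employed for balanced normal pseudomanifolds. The base case is $\Delta = \langle F \rangle$, a $d$-simplex, where $\Delta_T$ is a $(|T|-1)$-simplex and hence $G(\Delta_T) = K_{|T|}$ is infinitesimally rigid in $\mathbb{R}^{|T|}$ at any affinely independent placement. For the inductive step, suppose $\Delta$ is nontrivial; since $|T| \geq 3$, every facet of $\Delta$ restricts to a $(|T|-1)$-simplex in $\Delta_T$, so I can fix an edge $uv \in E(\Delta)$ with $\kappa(u), \kappa(v) \in T$. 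Apply Fogelsanger's decomposition (Lemma~\ref{lem:Fog_dec}) to $(\Delta, uv)$ to obtain nontrivial minimal $(d-1)$-cycle complexes $\Delta_1^+, \ldots, \Delta_m^+$. Each $\Delta_i^+$ remains balanced under the inherited coloring: for a facet $F \in \Delta_i^+ \setminus \Delta$, property~(b) gives $F - u, F - v \in \Delta$, and together with $\kappa(u) \neq \kappa(v)$ this forces all $d$ vertices of $F$ to carry distinct colors.

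To reduce to smaller complexes, I contract $uv$ in each $\Delta_i^+$ and invoke vertex splitting. By property~(c), $\Delta_i^+/uv$ is a minimal cycle complex with $|V(\Delta_i^+)| - 1 \leq |V(\Delta)| - 1$ vertices. A routine check shows $G((\Delta_i^+)_T)/uv$ coincides with the induced subgraph of $G(\Delta_i^+/uv)$ on vertices whose inherited color lies in $T$ (the merged vertex receives color $\kappa(u) \in T$), since all new edges from the contraction are incident to this merged vertex. The plan is to invoke the inductive hypothesis on $\Delta_i^+/uv$ to obtain rigidity of the contracted graph and then apply the Vertex Splitting Lemma (Lemma~\ref{lem:vertex splitting}) to recover rigidity of $G((\Delta_i^+)_T)$. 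For the splitting condition $|N_G(u) \cap N_G(v)| \geq |T|-1$ in $G = G((\Delta_i^+)_T)$, a facet $F \ni uv$ of $\Delta_i^+$ contributes $|T|-2$ common neighbors in $\kappa^{-1}(T)$ (one per color in $T \setminus \{\kappa(u), \kappa(v)\}$); picking any $w \in F \cap \kappa^{-1}(T) \setminus \{u,v\}$ (possible since $|T| \geq 3$) and applying Lemma~\ref{lem:basic}(ii) to the $(d-2)$-face $F - w$ yields a second facet $F' = (F - w) \cup \{w'\}$ of $\Delta_i^+$ with $\kappa(w') = \kappa(w) \in T$, furnishing the missing common neighbor.

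The main obstacle is that $\Delta_i^+/uv$ need not itself be balanced: a neighbor $w$ of $v$ in $\Delta_i^+$ that is not adjacent to $u$ may satisfy $\kappa(w) = \kappa(u)$, producing a monochromatic edge at the merged vertex and preventing a direct appeal to the inductive hypothesis. Resolving this color clash is the central technical step; I expect to overcome it by strengthening the inductive statement to cover minimal $(d-1)$-cycle complexes equipped with a coloring that is proper except at (at most) one distinguished vertex, and verifying that the same Fogelsanger-based argument still yields rank-selected rigidity under this relaxed hypothesis (with the subsequent vertex split restoring full balancedness). Once rigidity of each $G((\Delta_i^+)_T)$ is established, iterated application of the Gluing Lemma (Lemma~\ref{lem:gluing}), using properties~(d) and~(e) of the decomposition — any two consecutive pieces share a facet whose $T$-colored vertices form a $(|T|-1)$-simplex, spanning a $(|T|-1)$-dimensional affine subspace at a generic realization — yields the claimed rigidity of $G(\Delta_T)$ in $\mathbb{R}^{|T|}$.
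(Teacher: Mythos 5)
You have the right skeleton (induction on $|V(\Delta)|$, Fogelsanger decomposition, vertex splitting, gluing), and both your common-neighbor count and your observation that each $\Delta_i^+$ inherits a proper coloring from $\Delta$ are correct. But the step you yourself flag as ``the central technical step'' is exactly where the proof lives, and you leave it unresolved: you only state that you \emph{expect} a strengthened inductive hypothesis to work. The specific strengthening you name --- minimal cycle complexes with a coloring that is proper except at one distinguished vertex --- does not close the induction as stated. In the recursive call you must again contract an edge of the Fogelsanger pieces, and unless that edge is incident to the already-defective vertex you create a second defect, so defects accumulate. Worse, once facets are no longer rainbow, the two ingredients of your common-neighbor argument (``each facet contributes one vertex per color of $T$'' and ``the replacement vertex $w'$ in $F'=F-w+w'$ satisfies $\kappa(w')=\kappa(w)$'') both lose their justification, since they rely on every facet carrying all $d$ colors exactly once.

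The paper avoids the color clash entirely by changing the inductive invariant rather than patching the coloring: it proves the stronger lemma that for every $(\geq k)$-transversal set $U$ (meaning $|U\cap F|\geq k$ for every facet $F$) of a minimal $(d-1)$-cycle complex, $G(\Delta[U])$ is rigid in $\mathbb{R}^k$, and Theorem~\ref{thm:rank-selcted} follows because $\kappa^{-1}(T)$ is $(\geq |T|)$-transversal. Transversality, unlike properness of a coloring, is manifestly preserved both by passing to the pieces $\Delta_i^+$ (via Lemma~\ref{lem:Fog_dec}(b)) and by contracting an edge $uv$ with $u,v\in U$: a facet of $\Delta_i^+/uv$ either avoids $v$ or arises as $F-v+u$, losing $v\in U$ but gaining $u\in U$, so $|F\cap U|$ never drops below $k$. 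With this invariant the common-neighbor claim is proved purely from transversality and Lemma~\ref{lem:basic}(ii), with no reference to colors. Your remark that after contraction every facet still has at least $|T|$ vertices colored in $T$ is in fact the entire content of the correct fix; the missing idea is to promote that property to the inductive hypothesis and discard the coloring altogether.
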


For a pure simplicial complex $\Delta$, we say that a vertex subset $U \subseteq V(\Delta)$ is \emph{$(\geq k)$-transversal} if $|U \cap F| \geq k$ for every facet $F \in \Delta$.
Since, for a proper coloring $\kappa$ of a pure simplicial complex $\Delta$ and $T \subseteq [d]$, $\kappa^{-1}(T) \subseteq V(\Delta)$ is $(\geq |T|)$-transversal, Theorem~\ref{thm:rank-selcted} is a corollary of the following lemma.
\begin{lemma}
For $d \geq k \geq 3$, let $\Delta$ be a minimal $(d-1)$-cycle complex and $U \subseteq V(\Delta)$ be a $(\geq k)$-transversal set of $\Delta$.
Then $G(\Delta[U])$ is rigid in $\mathbb{R}^{k}$.
\end{lemma}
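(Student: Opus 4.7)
The plan is to run Fogelsanger's vertex-splitting induction at dimension $k$ on the induced subcomplex $\Delta[U]$, inducting on $|V(\Delta)|$. The base case is $\Delta=\langle\{F\}\rangle$ for a $d$-set $F$: then $G(\Delta[U])$ is a complete graph on $|F\cap U|\geq k$ vertices, and so is rigid in $\mathbb{R}^k$.

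For the inductive step, since every facet of $\Delta$ meets $U$ in at least $k\geq 3$ vertices, I can choose an edge $uv\in E(\Delta)$ with $u,v\in U$ and apply Fogelsanger's decomposition (Lemma~\ref{lem:Fog_dec}) to $(\Delta,uv)$ to obtain minimal $(d-1)$-cycle complexes $\Delta_1^+,\ldots,\Delta_m^+$. The first substantive step is to verify that $U\cap V(\Delta_i^+)$ is $(\geq k)$-transversal in each $\Delta_i^+$ and that $U\setminus\{v\}$ is $(\geq k)$-transversal in the contraction $\Delta_i^+/uv$, which is a minimal $(d-1)$-cycle complex by item (c). For a facet $F$ of $\Delta_i^+$ with $F\notin\Delta$, item (b) forces $u,v\in F$, and Lemma~\ref{lem:basic}(ii) applied to the $(d-2)$-face $F-u\in\Delta$ produces a facet $F''=(F-u)\cup\{w\}\in\Delta$ with $w\neq u$; comparing cardinalities and using $u\in U$ gives $|F\cap U|\geq|F''\cap U|\geq k$, and a parallel analysis handles the facets of $\Delta_i^+/uv$. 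Since $|V(\Delta_i^+/uv)|<|V(\Delta)|$, the inductive hypothesis yields that $G((\Delta_i^+/uv)[U\setminus\{v\}])=G(\Delta_i^+[U])/uv$ is rigid in $\mathbb{R}^k$.

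To finish, I lift rigidity from $G(\Delta_i^+[U])/uv$ to $G(\Delta_i^+[U])$ via the vertex splitting lemma (Lemma~\ref{lem:vertex splitting}) at dimension $k$, and then glue the pieces together using Lemma~\ref{lem:gluing} along the shared facets provided by item (e); each such shared facet has at least $k$ vertices in $U$, which can be realized affinely spanning a $(k-1)$-dimensional subspace, so the gluing is unobstructed. The main obstacle will be verifying the hypothesis $|N(u)\cap N(v)|\geq k-1$ of the vertex splitting lemma inside $G(\Delta_i^+[U])$: a single facet of $\Delta_i^+$ containing $uv$ supplies only $|F\cap U|-2\geq k-2$ common neighbors of $u,v$ in $U$, so I expect to produce the missing common neighbor by exploiting either an augmented facet of $\Delta_i^+$ (forced by item (b)) or a second facet containing $uv$ coming from the minimal cycle structure, together with $u\in U$ to pin the additional common neighbor into $U$.
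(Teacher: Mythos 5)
Your outline coincides with the paper's proof: induction on $|V(\Delta)|$, Fogelsanger decomposition at the edge $uv$ with $u,v\in U$, transversality of $U_i:=U\cap V(\Delta_i^+)$ in $\Delta_i^+$ and of $U_i-v$ in $\Delta_i^+/uv$, vertex splitting in $\mathbb{R}^k$, and gluing along shared facets. The transversality check for facets $F\in\Delta_i^+\setminus\Delta$ via a facet $F''\supseteq F-u$ of $\Delta$ and $u\in U$ is correct, and the gluing step is unproblematic for a generic realization. However, the one step you explicitly defer --- producing $k-1$ common neighbours of $u$ and $v$ inside $U_i$ --- is precisely the only nontrivial claim in the paper's proof, and your stated plan for it is not yet an argument and is slightly misaimed. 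The correct mechanism is: take a facet $F$ of $\Delta_i^+$ with $u,v\in F$; if $|(F-u-v)\cap U_i|\le k-2$ then in fact $|F\cap U_i|=k$ exactly, so since $k\ge 3$ you may choose $w\in(F-u-v)\cap U_i$; Lemma~\ref{lem:basic}(ii) applied to the $(d-2)$-face $F-w$ of $\Delta_i^+$ yields a second facet $G=F-w+x$ of $\Delta_i^+$ (still containing $u$ and $v$, since $w\notin\{u,v\}$); and because $|(F-w)\cap U_i|=k-1$ while $|G\cap U_i|\ge k$ by transversality, the new vertex $x$ is forced into $U_i$, giving the $(k-1)$-set $(F-u-v+x)\cap U_i$ of common neighbours.

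Two points in your sketch would not survive as stated. First, the decisive membership is $w\in U$ (the vertex you swap out must lie in $U$, so that the counting on $G$ pins $x$ into $U$); ``together with $u\in U$'' does not pin anything, since applying Lemma~\ref{lem:basic}(ii) to $F-u$ produces a facet that need not contain $u$, hence a vertex that need not be a common neighbour of $u$ and $v$. Second, the alternative route via ``an augmented facet forced by item (b)'' cannot work in general: $\Delta_i^+$ may have no facets outside $\Delta$ (e.g.\ when the decomposition is trivial, $m=1$ and $\Delta_1^+=\Delta$), so item (b) is vacuous there. Once the common-neighbour claim is closed as above, the rest of your argument goes through exactly as in the paper.
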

\begin{proof}
We prove the statement by the induction on $|V(\Delta)|$.
If $\Delta$ is a trivial $(d-1)$-cycle complex, $G(\Delta)$ is a complete graph. Hence, $G(\Delta[U])=G(\Delta)[U]$ is a complete graph, which is rigid in $\mathbb{R}^{k}$.

Suppose that $\Delta$ is a nontrivial minimal $(d-1)$-cycle complex. Pick $u,v \in U$ with $uv \in E(\Delta)$. Such $u,v$ always exist by $k \geq 3$.
Let $\Delta_1^+,\ldots,\Delta_m^+$ be the nontrivial minimal $(d-1)$-cycle complexes given in Lemma~\ref{lem:Fog_dec} with respect to $\Delta$ and $uv$. 
For each $i$, let $U_i:=U \cap V(\Delta_i^+)$.
For each facet $F$ of $\Delta_i^+ \cap \Delta$, we have $|F \cap U_i| \geq k$.
For each facet $F \in \Delta_i^+ \setminus \Delta$, as $F$ contains $u$ and $F-u$ is a $(d-2)$-face of $\Delta$ by Lemma~\ref{lem:Fog_dec} (b), we have $|F \cap U_i|=|(F-u) \cap U_i|+1 \geq k$. 
Hence $U_i$ is a $(\geq k)$-transversal set of $\Delta_i^+$.

\begin{claim} \label{claim:chap4}
$|N_{G(\Delta_i^+)}(u) \cap N_{G(\Delta_i^+)}(v) \cap U_i| \geq k-1$ holds for each $i$.
\end{claim}
\begin{proof}[Proof of claim]
Let $F$ be a facet of $\Delta_i^+$ containing both $u$ and $v$. Such a facet always exists by Lemma~\ref{lem:Fog_dec}(a). 
As $U_i$ is a $(\geq k)$-transversal set of $\Delta_i^+$, we have $|F \cap U_i| \geq k$.
Thus we have $|(F-u-v) \cap U_i| \geq k-2$. 
If $|(F-u-v) \cap U_i| \geq k-1$, the claim follows as $(F-u-v)\cap U_i$ is included in $N_{G(\Delta_i^+)}(u) \cap N_{G(\Delta_i^+)}(v)$. 
If $|(F-u-v) \cap U_i|=k-2$, by $k \geq 3$, we can pick $w \in (F-u-v) \cap U_i$. 
By Lemma~\ref{lem:basic}(ii), there is another facet $G \in \Delta_i^+$ different from $F$ which includes $F-w$. Since $|G \cap U_i| \geq k$, the unique element $x \in G \setminus F$ must be in $U_i$. Now $(F -u - v +x)\cap U_i$ is the desired subset.
\end{proof}
For each $i$, since $u,v \in U_i$, $U_i-v$ is a $(\geq k)$-transversal set of $\Delta_i^+/uv$ and $G(\Delta_i^+[U_i])/uv=G((\Delta_i^+/uv)[U_i-v])$.
Also $\Delta_i^+/uv$ is a minimal $(d-1)$-cycle complex by Lemma~\ref{lem:Fog_dec} (c).
Hence, by the induction hypothesis, $G((\Delta_i^+/uv)[U_i-v])$ is rigid in $\mathbb{R}^k$.
By Lemma~\ref{lem:vertex splitting} and Claim~\ref{claim:chap4}, $G(\Delta_i^+[U_i])$ is rigid in $\mathbb{R}^k$.
Now the rigidity of $G(\Delta[U])=\bigcup_{i=1}^m G(\Delta_i^+[U_i])$ follows from Lemma~\ref{lem:gluing} and Lemma~\ref{lem:Fog_dec} (d), (e).
\end{proof}

The balanced analogue of the lower bound theorem~\cite{Bar} has been investigated in~\cite{GKN,KN}.
For positive integers $n,d$ with $n$ divisible by $d$, a \emph{stacked cross-polytopal $(d-1)$-sphere} on $n$ vertices is the connected sum of $\frac{n}{d}-1$ copies of the boundary complex of the cross-polytope $\mathcal{C}_d^*$.
A stacked cross-polytopal $(d-1)$-sphere $\Delta$ satisfies $2h_2(\Delta)=(d-1)h_1(\Delta)$, and the balanced lower bound theorem~\cite{KN} asserts that $2h_2(\Delta)\geq(d-1)h_1(\Delta)$ holds for any balanced normal $(d-1)$-pseudomanifolds with $d \geq 3$.
Goff, Klee, Novik~\cite{GKN} showed that for a balanced pure simplicial complex $\Delta$, this inequality follows from the rigidity of $G(\Delta_T)$ in $\mathbb{R}^3$ for every $3$-set $T \subseteq [d]$.
Hence Theorem~\ref{thm:rank-selcted} implies the following generalization of balanced lower bound theorem to balanced minimal cycle complexes.
\begin{corollary} \label{cor:balanced_LBT}
Let $\Delta$ be a balanced minimal $(d-1)$-cycle complex with $d \geq3$. Then $2h_2(\Delta) \geq (d-1)h_1(\Delta)$.
\end{corollary}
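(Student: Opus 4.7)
The plan is to deduce the inequality from Theorem~\ref{thm:rank-selcted} via the double-counting argument of Goff, Klee, Novik~\cite{GKN}. First, I invoke Theorem~\ref{thm:rank-selcted} to obtain, for every $3$-subset $T \subseteq [d]$, that $G(\Delta_T)$ is rigid in $\mathbb{R}^{3}$. Because $\Delta$ is a pure $(d-1)$-dimensional simplicial complex and its proper $d$-coloring $\kappa$ must assign distinct colors to the $d$ vertices of any facet, each color in $[d]$ is attained by at least one vertex; hence $f_0(\Delta_T) \geq |T| = 3$. The standard rigidity matrix rank bound in $\mathbb{R}^3$ then yields
\[
f_1(\Delta_T) \;\geq\; 3 f_0(\Delta_T) - 6.
\]

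Next, I sum this inequality over all $\binom{d}{3}$ choices of $T$. Each edge $uv \in E(\Delta)$ lies in $\Delta_T$ precisely when $T \supseteq \{\kappa(u),\kappa(v)\}$, and since $\kappa(u) \neq \kappa(v)$, there are exactly $d-2$ such $T$. Each vertex $v \in V(\Delta)$ lies in $\Delta_T$ precisely when $\kappa(v) \in T$, giving $\binom{d-1}{2}$ choices. The summation therefore gives
\[
(d-2) f_1(\Delta) \;\geq\; 3\binom{d-1}{2} f_0(\Delta) - 6 \binom{d}{3}.
\]

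Multiplying through by $2/(d-2)$ (positive since $d \geq 3$) reduces this to $2 f_1(\Delta) \geq 3(d-1) f_0(\Delta) - 2 d(d-1)$. A direct computation from $h_1(\Delta) = f_0(\Delta) - d$ and $h_2(\Delta) = f_1(\Delta) - (d-1) f_0(\Delta) + \binom{d}{2}$ shows
\[
2 h_2(\Delta) - (d-1) h_1(\Delta) \;=\; 2 f_1(\Delta) - 3(d-1) f_0(\Delta) + 2 d(d-1),
\]
so the bound from the previous display is exactly the desired $2 h_2(\Delta) \geq (d-1) h_1(\Delta)$.

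Since all of the rigidity content is already packaged in Theorem~\ref{thm:rank-selcted}, there is no real obstacle in this corollary: the remaining work is purely combinatorial double-counting combined with an algebraic rearrangement of $f$- and $h$-entries, with the only minor point being the verification that $f_0(\Delta_T) \geq 3$ so that the inequality $f_1(\Delta_T) \geq 3 f_0(\Delta_T) - 6$ is applicable.
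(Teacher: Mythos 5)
Your proof is correct and follows essentially the same route as the paper: the paper derives the corollary by combining Theorem~\ref{thm:rank-selcted} with the counting argument of Goff, Klee, and Novik (cited rather than reproduced), and your double-counting of vertices and edges over the $\binom{d}{3}$ rank-selected subcomplexes is exactly that argument written out. The verification that $f_0(\Delta_T)\geq 3$ (so that rigidity yields $f_1(\Delta_T)\geq 3f_0(\Delta_T)-6$) and the final $f$-to-$h$ rearrangement both check out.
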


\begin{remark}
Characterizing simplicial complexes achieving the tight equality in the lower bound theorem is also a well-studied problem.
Klee and Novik~\cite{KN} showed that for a balanced normal $(d-1)$-pseudomanifold $\Delta$ with $d \geq 4$, $2h_2(\Delta) = (d-1)h_1(\Delta)$ holds if and only if $\Delta$ is a stacked cross-polytopal $(d-1)$-sphere. 
It is interesting to know when the equality $2h_2(\Delta) = (d-1)h_1(\Delta)$ occurs for a balanced minimal $(d-1)$-cycle complex $\Delta$.
\end{remark}



\section{Stanley-Reisner ring and rigidity} \label{sec:5}
For an $\mathbb{N}$-graded algebra $A$, the $i$th homogeneous component is denoted as $A_i$. An element of $A_1$ is called a linear form.
For a sequence of linear forms $\Theta=(\theta_1,\ldots,\theta_d)$ of $\mathbb{R}[\Delta]$, the ideal of $\mathbb{R}[\Delta]$ generated by $\theta_1,\ldots,\theta_d$ is denoted as $\Theta \mathbb{R}[\Delta]$.
For a $(d-1)$-dimensional simplicial complex $\Delta$, a sequence of $d$ linear forms $\theta_1,\ldots,\theta_d \in \mathbb{R}[\Delta]_1$ is called a \emph{linear system of parameters} (\emph{l.s.o.p.}~for short) for $\mathbb{R}[\Delta]$ if $\dim_{\mathbb{R}} \mathbb{R}[\Delta]/\Theta\mathbb{R}[\Delta] < \infty$.
For Stanley-Reisner ring, the following criterion of an l.s.o.p.~is known (see~\cite{Sta}).
\begin{lemma} \label{lem:Green-Kleitman}
Let $\Delta$ be a $(d-1)$-dimensional simplicial complex and $\theta_1,\ldots,\theta_d \in \mathbb{R}[\Delta]_1$ be linear forms. Define $p:V(\Delta) \rightarrow \mathbb{R}^d$ by the relation $\theta_i=\sum_{v \in V(\Delta)}p(v)_ix_v$ for $i \in [d]$.
Then $\theta_1,\ldots,\theta_d$ is an l.s.o.p.~for $\mathbb{R}[\Delta]$ if and only if $\{p(v):v \in F\}$ is linearly independent for every $F \in \Delta$.
\end{lemma}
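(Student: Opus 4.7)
The plan is to translate the finite-dimensionality of $\mathbb{R}[\Delta]/\Theta\mathbb{R}[\Delta]$ into a geometric statement about the common vanishing locus of $I_\Delta$ and $\theta_1,\ldots,\theta_d$, and then exploit the fact that the Stanley-Reisner variety is a union of coordinate subspaces indexed by the faces of $\Delta$.

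First I would pass to the complex extension. Since $I_\Delta$ and each $\theta_i$ have real coefficients, $\mathbb{R}[\Delta]/\Theta\mathbb{R}[\Delta]$ is finite-dimensional over $\mathbb{R}$ if and only if $\mathbb{C}[\Delta]/\Theta\mathbb{C}[\Delta]$ is finite-dimensional over $\mathbb{C}$. Because the ideal $I_\Delta + (\theta_1,\ldots,\theta_d)$ is homogeneous, the homogeneous Nullstellensatz tells us that this happens precisely when the zero set
\[
Z := V\bigl(I_\Delta + (\theta_1,\ldots,\theta_d)\bigr) \subseteq \mathbb{C}^{V(\Delta)}
\]
is contained in $\{0\}$ (otherwise $Z$ contains a punctured line through the origin by homogeneity, the quotient has positive Krull dimension, and hence is infinite-dimensional over $\mathbb{C}$).

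Next I would use the standard primary decomposition of the squarefree monomial ideal $I_\Delta$: one has $V(I_\Delta)=\bigcup_{F\in\Delta}L_F$, where $L_F:=\{x\in\mathbb{C}^{V(\Delta)}: x_v=0\text{ for all }v\notin F\}$. Writing $T:\mathbb{C}^{V(\Delta)}\to\mathbb{C}^d$ for the linear map with matrix $[p(v)]_{v\in V(\Delta)}$, so that $V(\theta_1,\ldots,\theta_d)=\ker T$, we obtain
\[
Z = \bigcup_{F\in\Delta}\bigl(L_F\cap\ker T\bigr).
\]
The restriction $T|_{L_F}$ is just the linear map $\mathbb{C}^F\to\mathbb{C}^d$ whose columns are the vectors $\{p(v):v\in F\}$; its kernel is trivial exactly when these vectors are linearly independent over $\mathbb{C}$, which, as they lie in $\mathbb{R}^d$, is equivalent to linear independence over $\mathbb{R}$. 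Combining, $Z=\{0\}$ if and only if $\{p(v):v\in F\}$ is linearly independent for every face $F\in\Delta$, and the lemma follows.

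I expect no major obstacle: the only delicate point is justifying the reduction to checking $Z=\{0\}$, which is the homogeneous Nullstellensatz combined with the fact that a graded $\mathbb{R}$-algebra of the form $\mathbb{R}[\Delta]/\Theta\mathbb{R}[\Delta]$ has finite $\mathbb{R}$-dimension iff its Krull dimension is zero iff its projective zero set is empty. Once that is in place, the remainder is a direct unwinding of the coordinate-subspace decomposition of $V(I_\Delta)$ and linear algebra on $T|_{L_F}$.
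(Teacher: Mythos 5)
Your argument is correct. Note that the paper does not prove this lemma at all: it is stated as a known criterion with a pointer to Stanley, so there is no in-paper proof to compare against. What you have written is essentially the standard proof of the Kind--Kleinschmidt criterion, phrased geometrically: the algebraic version works with the minimal primes $P_F=(x_v:v\notin F)$ of $I_\Delta$ over the facets $F$ and checks that the images of the $\theta_i$ in each quotient $\mathbb{R}[\Delta]/P_F\cong\mathbb{R}[x_v:v\in F]$ span the degree-one part, while your version takes varieties, decomposes $V(I_\Delta)$ into the coordinate subspaces $L_F$, and checks $L_F\cap\ker T=\{0\}$; these are the same computation. All the delicate points are handled correctly: the base change to $\mathbb{C}$ before invoking the Nullstellensatz, the equivalence (for a homogeneous ideal) of finite dimension of the quotient with the zero locus being contained in the origin, and the fact that linear independence of the real vectors $\{p(v):v\in F\}$ over $\mathbb{C}$ and over $\mathbb{R}$ coincide. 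One cosmetic remark: it suffices to run the union over facets rather than all faces, and conversely independence over every facet implies it over every face since faces sit inside facets; your statement quantifies over all $F\in\Delta$, which is harmless.
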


The connection between Stanley-Reisner ring theory and rigidity theory was pointed out by Lee~\cite{Lee}.
Lee~\cite{Lee} defined the notion of linear and affine $r$-stresses of a simplicial complex $\Delta$ and proved that, for a sequence of linear forms $\Theta=(\theta_1,\ldots,\theta_d)$ and a linear form $\omega=\sum_{v \in V(\Delta)} x_v$, $\left(\mathbb{R}[\Delta]/\Theta \mathbb{R}[\Delta]\right)_r$ (resp.~$\left(\mathbb{R}[\Delta]/(\Theta,\omega) \mathbb{R}[\Delta]\right)_r$) is linearly isomorphic to the space of linear (resp.~affine) $r$-stresses of $\Delta$.
For the remaining argument, the case of $r=1,2$ is related, which is summarized as below. 
\begin{lemma}[Lee~\cite{Lee}] \label{lem:Lee}
Let $\Delta$ be a $(d-1)$-dimensional simplicial complex.
Let $\Theta=(\theta_1,\ldots,\theta_d)$ be a sequence of linear forms of $\mathbb{R}[\Delta]$.
Let $\omega=\sum_{v \in V(\Delta)}x_v$ and define $p:V(\Delta) \rightarrow \mathbb{R}^d$ by the relation $\theta_i=\sum_{v \in V(\Delta)}p(v)_ix_v$ for $i \in [d]$. 
The followings hold:
\begin{itemize}
\item[(i)] $(\mathbb{R}[\Delta]/\Theta \mathbb{R}[\Delta])_1$ is linearly isomorphic to $\{t \in \mathbb{R}^{V(\Delta)}:\sum_{v \in V(\Delta)}t_v p(v)=0\}$ .
\item[(ii)] $\left(\mathbb{R}[\Delta]/(\Theta,\omega) \mathbb{R}[\Delta]\right)_2$ is linearly isomorphic to $\ker R(G(\Delta),p)^\top$.
\item[(iii)] $(\mathbb{R}[\Delta]/\Theta \mathbb{R}[\Delta])_2$ is linearly isomorphic to $\ker R(G(\Delta)*\{u\},p')^\top$, where $G(\Delta)*\{u\}$ is the cone graph of $G(\Delta)$ and $p':V(\Delta)\cup\{u\} \rightarrow \mathbb{R}^d$ is the extension of $p$ defined by $p'(u)=0$.
\end{itemize}
\end{lemma}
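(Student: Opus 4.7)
My plan is to establish all three isomorphisms uniformly by passing to duals in the standard monomial basis of $\mathbb{R}[\Delta]$ and matching the generator-wise vanishing conditions with equilibrium stress equations on the rigidity matrix.

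First I would dispose of (i) by a linear algebra computation. Identifying $\mathbb{R}[\Delta]_1 \cong \mathbb{R}^{V(\Delta)}$ via $x_v \leftrightarrow \bm{e}_v$ turns $\theta_i$ into the $i$-th row of the matrix $P \in \mathbb{R}^{d \times V(\Delta)}$ whose columns are the vectors $p(v)$. Thus $(\Theta\mathbb{R}[\Delta])_1$ becomes the row space of $P$, and orthogonal projection identifies the quotient $\mathbb{R}^{V(\Delta)}/\mathrm{rowspace}(P)$ with $(\mathrm{rowspace}(P))^{\perp} = \ker P = \{t : \sum_v t_v p(v) = 0\}$.

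For (ii), I would fix the monomial basis $\{x_v x_w : vw \in E(G(\Delta))\} \cup \{x_v^2 : v \in V(\Delta)\}$ of $\mathbb{R}[\Delta]_2$ and represent a functional $\phi \in \mathbb{R}[\Delta]_2^{*}$ by its values $\phi_{vw}$ and $\phi_{vv}$ on this basis. Since $((\Theta,\omega)\mathbb{R}[\Delta])_2$ is spanned by the elements $\theta_i x_v$ and $\omega x_v$, the annihilator of this subspace in $\mathbb{R}[\Delta]_2^{*}$ is cut out by the conditions $\phi(\theta_i x_v) = 0$ and $\phi(\omega x_v) = 0$. Expanding these via the Stanley-Reisner relations gives, for each $v$ and $i$,
\begin{equation*}
p(v)_i \phi_{vv} + \sum_{w:\, vw \in E(G(\Delta))} p(w)_i \phi_{vw} = 0, \qquad \phi_{vv} + \sum_{w:\, vw \in E(G(\Delta))} \phi_{vw} = 0.
\end{equation*}
Eliminating $\phi_{vv}$ yields $\sum_w \phi_{vw}(p(w) - p(v)) = 0$, exactly the equilibrium stress equation at $v$ for $(G(\Delta), p)$. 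Thus $\phi \mapsto (\phi_{vw})$ gives a linear isomorphism between the annihilator and $\ker R(G(\Delta),p)^{\top}$, and since the annihilator is canonically the dual of $(\mathbb{R}[\Delta]/(\Theta,\omega)\mathbb{R}[\Delta])_2$, finite-dimensionality delivers (ii).

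Finally, for (iii) I would rerun the same dualization while omitting the $\omega$-constraint, leaving $\phi_{vv}$ free. On the cone side, the choice $p'(u) = 0$ makes the equilibrium of a stress $\omega'$ at $v \in V(\Delta)$ read $\sum_w \omega'_{vw}(p(w) - p(v)) - \omega'_{uv} p(v) = 0$, matching the remaining Stanley-Reisner equation $p(v)_i \phi_{vv} + \sum_w p(w)_i \phi_{vw} = 0$ under the correspondence $\phi_{vw} = \omega'_{vw}$, $\phi_{vv} = -\omega'_{uv} - \sum_w \omega'_{vw}$. The main (and only) subtlety I expect is verifying that the extra equilibrium at the cone vertex $u$, namely $\sum_v \omega'_{uv} p(v) = 0$, is automatically implied by the $v$-equilibria; summing the latter as vector identities in $\mathbb{R}^d$ and using the symmetry $\omega'_{vw} = \omega'_{wv}$ collapses the double sum $\sum_v \sum_w \omega'_{vw}(p(w) - p(v))$ in pairs, leaving precisely $-\sum_v \omega'_{uv} p(v) = 0$. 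This confirms that the cone faithfully absorbs the missing $\omega$-relation and yields (iii).
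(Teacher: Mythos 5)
Your proposal is correct. Note that the paper itself offers no proof of this lemma: it is stated as a citation to Lee, so there is no in-paper argument to compare against. Your dualization is the standard way to verify the correspondence, and all three parts check out: in (i) the degree-one part of the ideal is exactly the row space of the matrix with columns $p(v)$, so the quotient is $\ker P$; in (ii) the monomial basis $\{x_vx_w: vw\in E\}\cup\{x_v^2\}$ of $\mathbb{R}[\Delta]_2$ is legitimate because $I_\Delta$ is a monomial ideal, the annihilator conditions $\phi(\theta_i x_v)=0$ and $\phi(\omega x_v)=0$ are precisely as you expand them, and eliminating $\phi_{vv}$ gives the equilibrium equation at $v$, with the map $\phi\mapsto(\phi_{vw})$ visibly bijective onto the stress space since $\phi_{vv}$ is recovered from $\phi(\omega x_v)=0$. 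The one genuinely delicate point is the one you isolate in (iii): a stress of the cone must also be in equilibrium at the apex $u$, and your observation that summing the vector equilibria over $v\in V(\Delta)$ cancels all interior edge contributions in pairs and leaves exactly $-\sum_v\omega'_{uv}p(v)=0$ is the right (and complete) justification that the apex equation is implied, so the correspondence $\phi_{vw}=\omega'_{vw}$, $\phi_{vv}=-\omega'_{uv}-\sum_w\omega'_{vw}$ is a linear isomorphism. The only cosmetic caveat is that you identify $(\mathbb{R}[\Delta]_2/W)$ with the annihilator of $W$ in the dual; in finite dimensions this is harmless and the lemma only claims linear isomorphism.
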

We have the following corollary. Although the result is already known, we include the proof for completeness.
\begin{corollary}\label{cor:inj}
Let $\Delta$ be a strongly connected $(d-1)$-dimensional simplicial complex.
Let $\Theta=(\theta_1,\ldots,\theta_d)$ be an l.s.o.p.~for $\mathbb{R}[\Delta]$.
Let $\omega=\sum_{v \in V(\Delta)}x_v$ and define $p:V(\Delta) \rightarrow \mathbb{R}^d$ by the relation $\theta_i=\sum_{v \in V(\Delta)}p(v)_ix_v$ for $i \in [d]$.
Then the multiplication map $(\times \omega):\left(\mathbb{R}[\Delta]/\Theta \mathbb{R}[\Delta]\right)_1 \rightarrow \left(\mathbb{R}[\Delta]/\Theta \mathbb{R}[\Delta]\right)_2$ is injective if and only if $(G(\Delta),p)$ is infinitesimally rigid in $\mathbb{R}^d$.
\end{corollary}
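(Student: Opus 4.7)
The plan is to combine Lemma~\ref{lem:Lee} with a short exact sequence argument and reduce the iff to a single dimension count, which will use the strong connectedness and l.s.o.p.\ hypotheses through an \emph{infinitesimal rotation} analysis on facets.

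Set $A := \mathbb{R}[\Delta]/\Theta\mathbb{R}[\Delta]$, $V := V(\Delta)$, $G := G(\Delta)$, and $P := \bigoplus_{v \in V}\mathbb{R}\,p(v) \subseteq \mathbb{R}^{d|V|}$. Multiplication by $\omega$ yields the exact sequence $0 \to \ker(\times\omega) \to A_1 \xrightarrow{\times\omega} A_2 \to (A/\omega A)_2 \to 0$, so $(\times\omega)$ is injective iff $\dim A_2 = \dim A_1 + \dim(A/\omega A)_2$. By Lemma~\ref{lem:Green-Kleitman} the l.s.o.p.\ hypothesis gives linear independence of $\{p(v):v \in F\}$ for every facet $F$, so $p(V)$ linearly spans $\mathbb{R}^d$ (in particular the affine span has dimension $\geq d-1$), and Lemma~\ref{lem:Lee}(i) gives $\dim A_1 = |V| - d$. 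To compute $\dim A_2$, I use Lemma~\ref{lem:Lee}(iii): writing a motion of the cone $(G*\{u\},p')$ as $\dot p(u) = t \in \mathbb{R}^d$ and $\dot p(v) = t + \dot q(v)$, the $G$-edges force $\dot q \in \ker R(G,p)$, and the cone edges (via $p'(u) = 0$) force $p(v)\cdot \dot q(v) = 0$, i.e., $\dot q \in P^\perp$. Thus the motion space of the cone is $\mathbb{R}^d \oplus (\ker R(G,p) \cap P^\perp)$, and combining this with Lemma~\ref{lem:Lee}(ii,iii) turns the injectivity condition into
\begin{equation*}
\dim(\ker R(G,p) \cap P^\perp) = \dim \ker R(G,p) - d.
\end{equation*}

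The crucial step is the claim that under strong connection and l.s.o.p., $\ker R(G,p) \cap P^\perp$ equals the space of infinitesimal rotations $\{(Sp(v))_{v\in V}: S \in \mathbb{R}^{d\times d}\text{ skew}\}$, a $\binom{d}{2}$-dimensional subspace (since $p(V)$ spans $\mathbb{R}^d$). Pick any facet $F$; since $\{p(v): v \in F\}$ is a basis of $\mathbb{R}^d$, the orthogonality $p(v)\cdot \dot p(v) = 0$ for $v \in F$ together with the edge condition on pairs in $F$, which rewrites in $P^\perp$ as $p(u)\cdot \dot p(v) + p(v)\cdot \dot p(u) = 0$ for $u,v \in F$, determines a unique skew $S_F$ with $\dot p(v) = S_F p(v)$ for all $v \in F$. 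For an adjacent facet $F'$ sharing the $(d-2)$-face $T$, the skew matrices $S_F$ and $S_{F'}$ coincide on the hyperplane $\spa\{p(v): v \in T\}$; since a skew matrix vanishing on a hyperplane is zero, $S_F = S_{F'}$. Strong connectedness then propagates a common skew $S$ over all of $V$. Substituting this into the displayed equation gives $\dim \ker R(G,p) = \binom{d+1}{2}$, which, given the affine span condition, is precisely infinitesimal rigidity of $(G,p)$ in $\mathbb{R}^d$.

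The main obstacle will be the crucial claim. The facet-wise step reduces to linear algebra on a basis of $\mathbb{R}^d$, but the propagation across adjacent facets is where both hypotheses really interact: strong connectedness to reach any facet from any other, and the l.s.o.p.\ to guarantee that each shared $(d-2)$-face spans a full hyperplane, on which skew matrices are pinned down.
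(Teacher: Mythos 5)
Your proof is correct, and its overall skeleton matches the paper's: both use Lemma~\ref{lem:Lee} to convert the question into a dimension count, where $\dim A_1=f_0(\Delta)-d$ and $\dim (A/\omega A)_2=\dim\ker R(G(\Delta),p)^\top$ come for free (writing $A$ for $\mathbb{R}[\Delta]/\Theta\mathbb{R}[\Delta]$ as you do), so that everything reduces to pinning down the second graded piece $A_2$, i.e.\ the rank of the cone's rigidity matrix $R(G(\Delta)*\{u\},p')$. Where you diverge is in how that one substantive step is carried out. The paper applies the Gluing Lemma~\ref{lem:gluing}: each facet $F$ yields a clique $F+u$ in the cone whose points affinely span $\mathbb{R}^d$ by Lemma~\ref{lem:Green-Kleitman}, adjacent facets overlap in a set spanning a hyperplane, and strong connectedness lets one glue these pieces into an infinitesimally rigid cone framework, giving $\dim A_2=h_2(\Delta)$ unconditionally. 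You instead compute the cone's motion space as $\mathbb{R}^d\oplus\left(\ker R(G,p)\cap P^\perp\right)$ and show by hand that $\ker R(G,p)\cap P^\perp$ is exactly the $\binom{d}{2}$-dimensional space of infinitesimal rotations, building a skew matrix $S_F$ on each facet (skewness follows from checking the bilinear identity on the basis $\{p(v):v\in F\}$) and propagating it across shared $(d-2)$-faces via the observation that a skew matrix vanishing on a hyperplane has rank at most one and hence vanishes. The two arguments are equivalent in content --- your crucial claim is precisely the assertion that the cone framework is infinitesimally rigid, and both invoke strong connectedness and the l.s.o.p.\ hypothesis at the same points --- but yours is self-contained and makes explicit how the rotational part of the trivial motions is detected by the condition $p(v)\cdot\dot p(v)=0$, whereas the paper's is shorter because it delegates the linear algebra to the Gluing Lemma. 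All the intermediate computations in your reduction (the four-term exact sequence, the identity $\dim(\ker R(G,p)\cap P^\perp)=\dim\ker R(G,p)-d$, and the final equivalence with $\dim\ker R(G,p)=\binom{d+1}{2}$) check out.
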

\begin{proof}
Since $\Theta$ is an l.s.o.p.~for $\mathbb{R}[\Delta]$, $p(V(\Delta))$ linearly spans $\mathbb{R}^d$ by Lemma~\ref{lem:Green-Kleitman}.
Hence by Lemma~\ref{lem:Lee} (i), 
$\dim \left(\mathbb{R}[\Delta]/\Theta \mathbb{R}[\Delta]\right)_1 = f_0(\Delta)-d=h_1(\Delta)$.
By Lemma~\ref{lem:Lee} (ii), $\Coker(\times \omega) \cong \left(\mathbb{R}[\Delta]/(\Theta,\omega) \mathbb{R}[\Delta]\right)_2$ is linearly isomorphic to $\ker R(G(\Delta),p)^\top$.
Hence $\dim \Coker(\times \omega)=f_1(\Delta)-df_0(\Delta)+\binom{d+1}{2}(=h_2(\Delta)-h_1(\Delta))$ if and only if $(G(\Delta),p)$ is infinitesimally rigid in $\mathbb{R}^d$.
Thus, the statement follows if $\dim (\mathbb{R}[\Delta]/\Theta \mathbb{R}[\Delta])_2=h_2(\Delta)$ always holds. 

To see this, let $G(\Delta) * \{u\}$ be the cone graph of $G(\Delta)$ and let $p':V(\Delta) \cup \{u\} \rightarrow \mathbb{R}^d$ be the point configuration as in Lemma~\ref{lem:Lee} (iii).
By Lemma~\ref{lem:Green-Kleitman}, for each facet $F$ of $\Delta$, $F + u$ is a clique in $G(\Delta) * \{u\}$ and $p'(F + u)$ affinely spans $\mathbb{R}^d$, so $(G(\Delta)[F+u],p'|_{F+u})$ is infinitesimally rigid in $\mathbb{R}^d$.
Again by Lemma~\ref{lem:Green-Kleitman}, for facets $F$ and $G$ of $\Delta$ with $|F \cap G|=d-1$, $p'((F\cap G)+u)$ affinely spans a $(d-1)$-dimensional subspace. 
As $\Delta$ is strongly connected, one can order the facets of $\Delta$ as $F_1,\ldots,F_m$ in such a way that, for each $i \geq 2$, there is $j <i$ satisfying $|F_i \cap F_j|=d-1$.
Hence by the repeated application of Lemma~\ref{lem:gluing}, $(G(\Delta)*\{u\},p')$ is infinitesimally rigid in $\mathbb{R}^d$.
Therefore by Lemma~\ref{lem:Lee}(iii), we get $\dim (\mathbb{R}[\Delta]/\Theta \mathbb{R}[\Delta])_2 =\dim \ker R(G(\Delta)*\{u\},p')^\top = (f_1(\Delta)+f_0(\Delta))-d(f_0(\Delta)+1) + \binom{d+1}{2}=h_2(\Delta)$ as desired.
\end{proof}

Let us recall the definition of $\bm{a}$-balancedness.
For $\bm{a} \in \mathbb{Z}_{>0}^m$ with $\sum_{i=1}^m a_i=d$, a $(d-1)$-dimensional simplicial complex $\Delta$ on the vertex set $V(\Delta)$ is \emph{$\bm{a}$-balanced} if there is a map $\kappa:V(\Delta) \rightarrow [m]$ satisfying $|F \cap \kappa^{-1}(i)|\leq a_i$ for any $F \in \Delta$.
We call such a map $\kappa$ an \emph{$\bm{a}$-coloring} of $\Delta$.
If $\Delta$ is pure, this condition is equivalent to $|F \cap \kappa^{-1}(i)| = a_i$ for any facet $F$ of $\Delta$.
Stanley~\cite{Sta} showed that an $\bm{a}$-balanced simplicial complex $\Delta$ admits a special type of l.s.o.p.~for $\mathbb{R}[\Delta]$. 
We remark that we always use the term ``linear forms'' to mean degree one forms in the usual $\mathbb{N}$-grading.
\begin{prop}[Stanley~\cite{Sta}] \label{prop:a-bal-lsop}
Let $\Delta$ be an $\bm{a}$-balanced $(d-1)$-dimensional simplicial complex for $\bm{a} \in \mathbb{Z}_{>0}^m$ and $\kappa:V(\Delta)\rightarrow [m]$ be an $\bm{a}$-coloring of $\Delta$.
Make $\mathbb{R}[\Delta]$ into an $\mathbb{N}^m$-graded algebra by defining $\deg x_v = \bm{e}_{\kappa(v)} \in \mathbb{N}^m$.
Then $\mathbb{R}[\Delta]$ has an l.s.o.p.~$\Theta=(\theta_1,\ldots,\theta_d)$ such that each $\theta_i$ is homogeneous in $\mathbb{N}^m$-grading and exactly $a_i$ elements among $\Theta$ are of degree $\bm{e}_i$ for each $i \in [m]$.
\end{prop}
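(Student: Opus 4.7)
The plan is to translate the statement into a geometric condition via the Green--Kleitman criterion (Lemma~\ref{lem:Green-Kleitman}), and then to produce the required point configuration by a direct generic construction within coordinate subspaces. Fix a partition $[d] = I_1 \sqcup \cdots \sqcup I_m$ with $|I_i| = a_i$, and identify $\mathbb{R}^d$ with $\bigoplus_{i=1}^m H_i$, where $H_i \subseteq \mathbb{R}^d$ is the coordinate subspace indexed by $I_i$. A sequence of linear forms $\theta_j = \sum_{v} p(v)_j x_v$ ($j \in [d]$) has the grading property---namely, for each $i \in [m]$, the forms $\{\theta_j : j \in I_i\}$ are homogeneous of degree $\bm{e}_i$ in the $\mathbb{N}^m$-grading---if and only if $p(v)_j = 0$ whenever $j \notin I_{\kappa(v)}$, i.e.\ if and only if $p(v) \in H_{\kappa(v)}$ for every vertex $v$. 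So the task reduces to constructing such a $(\kappa,\bm{a})$-sparse configuration $p$ that also satisfies the l.s.o.p.\ condition from Lemma~\ref{lem:Green-Kleitman}: $\{p(v) : v \in F\}$ is linearly independent for every $F \in \Delta$.

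Next, I would choose, for each color $i \in [m]$, the points $\{p(v) : \kappa(v) = i\} \subseteq H_i \cong \mathbb{R}^{a_i}$ to be in general linear position, so that any $a_i$ of them form a basis of $H_i$. An explicit choice is to place these vertices on the moment curve in $H_i$, or simply to pick them generically; in either case the condition involves finitely many determinants and is easy to satisfy. Given any face $F \in \Delta$, let $F_i := F \cap \kappa^{-1}(i)$. The $\bm{a}$-balanced hypothesis gives $|F_i| \leq a_i$, so by the choice of $p$, the set $\{p(v) : v \in F_i\}$ is linearly independent inside $H_i$. Since $\mathbb{R}^d = H_1 \oplus \cdots \oplus H_m$ is an internal direct sum, the union $\{p(v) : v \in F\} = \bigcup_i \{p(v) : v \in F_i\}$ is automatically linearly independent in $\mathbb{R}^d$. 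Lemma~\ref{lem:Green-Kleitman} then certifies that $\Theta = (\theta_1, \ldots, \theta_d)$ is an l.s.o.p., and by construction it has the required $\mathbb{N}^m$-graded structure.

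There is no real obstacle here---the whole argument is an elementary consequence of the Green--Kleitman criterion plus the direct-sum decomposition $\mathbb{R}^d = \bigoplus_i H_i$. The only subtle point worth emphasizing is that placing vertices of each color \emph{generically within its own coordinate subspace} is enough: one does not need joint genericity across colors, because the relevant linear independence across colors is automatic from the direct-sum structure. This is precisely the feature that makes $(\kappa,\bm{a})$-sparse configurations a natural object of study, and it motivates the reformulation of Conjecture~\ref{conj:wl} as Conjecture~\ref{conj:main} via Corollary~\ref{cor:inj}.
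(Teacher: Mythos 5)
Your proof is correct. The paper itself gives no proof of Proposition~\ref{prop:a-bal-lsop} (it is quoted from Stanley), but the translation you perform is exactly the one the paper records immediately after the proposition --- via Lemma~\ref{lem:Green-Kleitman}, an $\bm{a}$-colored s.o.p.\ is the same as a $(\kappa,\bm{a})$-sparse configuration $p$ with $\{p(v):v\in F\}$ linearly independent for every $F\in\Delta$ --- and your observation that generic (or moment-curve) placement within each $H_i\cong\mathbb{R}^{a_i}$ suffices, with cross-color independence coming for free from the direct sum $\mathbb{R}^d=\bigoplus_i H_i$, correctly completes the argument.
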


An l.s.o.p.~satisfying the property in Proposition~\ref{prop:a-bal-lsop} is called an \emph{$\bm{a}$-colored s.o.p.} We now prove the equivalence of Conjecture~\ref{conj:wl} and Conjecture~\ref{conj:main}.
We recall the definition of $(\kappa,\bm{a})$-sparse rigidity.
Let $G$ be a graph, $\kappa:V(G) \rightarrow [m]$ a map, and $\bm{a} \in \mathbb{Z}_{>0}^m$ an integer vector with $\sum_{i=1}^m a_i=d$. 
Let $H_i:=0 \times \cdots \times \mathbb{R}^{a_i} \times \cdots \times 0 \subseteq \mathbb{R}^d=\prod_{i=1}^m \mathbb{R}^{a_i}$ for $i \in [m]$. We say that a point configuration $p:V(G) \rightarrow \mathbb{R}^d$ is \emph{$(\kappa,\bm{a})$-sparse} if $p(v) \in H_{\kappa(v)}$ for all $v\in V(G)$, and $G$ is \emph{$(\kappa,\bm{a})$-sparse rigid} if $(G,p)$ is infinitesimally rigid in $\mathbb{R}^d$ for some $(\kappa,\bm{a})$-sparse point configuration $p$.
Note that by Lemma~\ref{lem:Green-Kleitman}, under the identification $\theta_i=\sum_{v \in V(\Delta)} p(v)_i x(v)$ for $i \in [d]$, $\theta_1,\ldots,\theta_d$ is an $\bm{a}$-colored s.o.p.~for $\mathbb{R}[\Delta]$ if and only if $p:V(\Delta) \rightarrow \mathbb{R}^d$ is $(\kappa,\bm{a})$-sparse and $\{p(v):v \in F\}$ is linearly independent for every $F \in \Delta$.
\begin{proof}[Proof of equivalence of Conjecture~\ref{conj:wl} and Conjecture~\ref{conj:main}]
If Conjecture~\ref{conj:wl} holds, for any generic choice of $\omega$, $(\times \omega):\left(\mathbb{R}[\Delta]/\Theta \mathbb{R}[\Delta]\right)_1 \rightarrow \left(\mathbb{R}[\Delta]/\Theta \mathbb{R}[\Delta]\right)_2$ is injective. 
So, in Conjecture~\ref{conj:wl}, we may add extra constraints that $\omega=\sum_{v \in V(\Delta)}a_v x_v$ with $a_v \neq 0$ for all $v \in V(\Delta)$.
Moreover, by setting $a_vx_v$ to $x_v$, we may further suppose $\omega=\sum_{v \in V(\Delta)}x_v$ in the statement of Conjecture~\ref{conj:wl}.
Now, the equivalence of Conjecture~\ref{conj:wl} and Conjecture~\ref{conj:main} follows from Corollary~\ref{cor:inj}.
\end{proof}

\section{$(\kappa,\bm{a})$-sparse rigidity of minimal cycle complexes for $\bm{a} \in \mathbb{Z}_{\geq 2}^m$}

In this section we shall verify Conjecture~\ref{conj:main} in the case when $a_i \geq 2$ for all $i$ as follows.
\begin{theorem} \label{thm:a-bal}
Let $\bm{a}=(a_1,\ldots,a_m) \in \mathbb{Z}_{>0}^m$ be a positive integer vector with $a_i \geq 2$ for all $i \in [m]$ and $d:=\sum_{i=1}^m a_i \geq 3$.
Let $\Delta$ be an $\bm{a}$-balanced minimal $(d-1)$-cycle complex with an $\bm{a}$-coloring $\kappa$.
Then $G(\Delta)$ is $(\kappa,\bm{a})$-sparse rigid.
\end{theorem}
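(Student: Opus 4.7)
The plan is to prove the theorem by induction on $|V(\Delta)|$, adapting the Fogelsanger-decomposition strategy of Theorem~\ref{thm:rank-selcted} to preserve the $\bm{a}$-coloring by contracting along a \emph{monochromatic} edge. The hypothesis $a_i \geq 2$ is essential in two ways: it guarantees the existence of an edge $uv \in E(\Delta)$ with $\kappa(u) = \kappa(v)$ (since every facet of $\Delta$ contains $a_i \geq 2$ vertices of each color $i$, any two of which are joined by an edge), and it leaves at least one free dimension in each coordinate subspace $H_{\kappa(v)}$ for the $(\kappa, \bm{a})$-sparse splitting direction. For the base case $\Delta = \langle \{F\}\rangle$, $G(\Delta) = K_d$; placing the $a_i$ color-$i$ vertices at any $a_i$ linearly independent points of $H_i$ realizes $V(\Delta)$ as a basis of $\mathbb{R}^d$, which is in particular affinely independent, so the complete graph $K_d$ is infinitesimally rigid.

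For the inductive step, fix $uv \in E(\Delta)$ with $\kappa(u) = \kappa(v) = c$ and apply Lemma~\ref{lem:Fog_dec} to obtain nontrivial minimal $(d-1)$-cycle complexes $\Delta_1^+, \ldots, \Delta_N^+$. First I verify that each $\Delta_j^+/uv$ is $\bm{a}$-balanced under the restricted coloring: by Lemma~\ref{lem:Fog_dec}(b), any facet of $\Delta_j^+ \setminus \Delta$ contains both $u$ and $v$ and so collapses to a $(d-1)$-set (a non-facet, since $\Delta_j^+/uv$ is $(d-1)$-dimensional); the remaining facets of $\Delta_j^+/uv$ are either facets of $\Delta_j^+ \cap \Delta$ not containing $v$ (unchanged) or of the form $F - v + u$ with $F \in \Delta_j^+ \cap \Delta$, $v \in F$, $u \notin F$, and the latter inherits $\bm{a}$-balance from $F$ because $\kappa(u) = \kappa(v)$ preserves color counts. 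Since $|V(\Delta_j^+/uv)| \leq |V(\Delta)| - 1$, the induction hypothesis gives that $G(\Delta_j^+/uv)$ is $(\kappa, \bm{a})$-sparse rigid.

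Next I apply a $(\kappa, \bm{a})$-sparse vertex splitting to pass from $\Delta_j^+/uv$ back to $\Delta_j^+$. A facet $F \in \Delta_j^+$ containing $uv$ (Lemma~\ref{lem:Fog_dec}(a)) supplies $d - 2$ common neighbors of $u, v$ in $F \setminus \{u, v\}$; applying Lemma~\ref{lem:basic}(ii) to the $(d-2)$-face $F - w$ for some well-chosen $w \in F \setminus \{u, v\}$ of color $c$ yields one more common neighbor $x$ as the extra vertex of an auxiliary facet $G = (F - w) \cup \{x\}$, giving a size-$(d-1)$ subset $C := (F - u - v) \cup \{x\}$ of $N_{G(\Delta_j^+)}(u) \cap N_{G(\Delta_j^+)}(v)$. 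Given a generic $(\kappa, \bm{a})$-sparse configuration $p_j$ on $V(\Delta_j^+/uv)$ for which the framework is infinitesimally rigid, I need $z \in H_c$ outside $V := \spa\{p_j(w) - p_j(u) : w \in C\}$. Writing $l_i := |\{w \in C : \kappa(w) = i\}|$, a generic-dimension calculation using $p_j(u) \in H_c$ yields
\[
\dim(V \cap H_c) = l_c + \sum_{i \neq c} \max(l_i - a_i, 0),
\]
and a short case analysis based on the $\bm{a}$-balance of $F$ (when $F \in \Delta$) or of its boundary faces $F - u, F - v \in \Delta$ (when $F \in \Delta_j^+ \setminus \Delta$) shows this dimension equals $a_c - 1 < a_c$. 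Therefore $z \in H_c \setminus V$ exists, and Lemma~\ref{lem:vertex splitting} yields a $(\kappa, \bm{a})$-sparse rigid extension on $G(\Delta_j^+)$ with $p_j'(v) = p_j(u) + tz \in H_c = H_{\kappa(v)}$.

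To conclude, by Zariski-openness of infinitesimal rigidity in the space of $(\kappa, \bm{a})$-sparse configurations, a single generic $(\kappa, \bm{a})$-sparse $p\colon V(\Delta) \to \mathbb{R}^d$ makes every $(G(\Delta_j^+), p|_{V(\Delta_j^+)})$ rigid simultaneously, and for generic $p$ the shared facet from Lemma~\ref{lem:Fog_dec}(e) is an affinely independent $d$-set spanning a $(d-1)$-dimensional affine subspace. Iterated gluing via Lemma~\ref{lem:gluing} along the chain of Lemma~\ref{lem:Fog_dec}(d,e) then produces $(\kappa, \bm{a})$-sparse rigidity of $G(\Delta) = \bigcup_j G(\Delta_j^+)$, completing the induction. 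The main obstacle is the dimension count when the splitting facet $F$ lies in $\Delta_j^+ \setminus \Delta$ and is itself not $\bm{a}$-balanced; in that case one must carefully track how the $\bm{a}$-balance of $F - u$, $F - v$ (and of the $(d-2)$-subfaces of the auxiliary facet $G$ that sit in $\Delta$) constrains $\kappa(x)$ so as to force $\dim(V \cap H_c) = a_c - 1$.
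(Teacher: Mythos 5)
Your overall strategy coincides with the paper's: induct on $|V(\Delta)|$, contract a monochromatic edge $uv$ with $\kappa(u)=\kappa(v)=c$, apply Lemma~\ref{lem:Fog_dec}, split the vertex back using a $(d-1)$-set $C$ of common neighbours built from a facet $F\ni u,v$ together with one extra vertex $x$ supplied by Lemma~\ref{lem:basic}(ii), and glue. The gap sits in the one step carrying the real combinatorial content: the choice of the vertex $w$ deleted from $F$ so that the colour of the replacement vertex $x$ is controlled. You take $w$ of colour $c$. Write $t_\kappa(F)=\bm{a}+\bm{e}_c-\bm{e}_j$, which is what Lemma~\ref{lem:Fog_dec}(b) applied to $F-v$ gives (property ($\star$) in the paper). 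If $j\neq c$ and $\kappa(w)=c$, then $t_\kappa(F-w)=\bm{a}-\bm{e}_j$, and the only constraint on the new facet $G=F-w+x$ of $\Delta_i^+$ is that it again satisfies ($\star$); this permits both $\kappa(x)=j$ and $\kappa(x)=c$, the latter being possible when $G\in\Delta_i^+\setminus\Delta$, where only $G-u,G-v\in\Delta$ is guaranteed. In that bad case $t_\kappa(C+u+v)=\bm{a}+2\bm{e}_c-\bm{e}_j$, the Hall condition fails, and your own formula returns $\dim(V\cap H_c)=l_c=a_c$, not $a_c-1$: the $a_c$ colour-$c$ vertices of $C$ together with $u$ are $a_c+1$ generic points of $H_c\cong\mathbb{R}^{a_c}$, so the $a_c$ difference vectors already span all of $H_c$ and no admissible splitting direction $z\in H_c\setminus V$ exists. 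Your closing paragraph flags exactly this situation as ``the main obstacle'' but asserts, rather than proves, that the balance conditions force $\dim(V\cap H_c)=a_c-1$; with $w$ of colour $c$ they do not.

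The fix --- and this is precisely the paper's Claim~\ref{claim:a-bal} --- is to delete $w$ from the \emph{deficient} colour class: if $j\neq c$, choose $\kappa(w)=j$, which is possible because $F$ contains $a_j-1\geq 1$ vertices of colour $j$ by the hypothesis $a_j\geq 2$. Then $t_\kappa(F-w)=\bm{a}+\bm{e}_c-2\bm{e}_j$, and for $G=F-w+x$ to satisfy ($\star$) the colour of $x$ is forced to equal $j$, giving $t_\kappa(C+u+v)=\bm{a}+\bm{e}_c$; when $j=c$ any choice of $w$ works. With that single correction the remainder of your argument (the induction hypothesis applied to $\Delta_i^+/uv$, the splitting step via Corollary~\ref{cor:hall} or your equivalent dimension count via Lemma~\ref{lem:gen_pos}, and the gluing along shared facets) goes through exactly as in the paper.
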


The proof of Theorem~\ref{thm:a-bal} consists of several lemmas.
To state lemmas, we introduce $L$-sparse rigidity as a generalization of $(\kappa,\bm{a})$-sparse rigidity. 
For $x=(x_1,\ldots,x_d) \in \mathbb{R}^d$, we denote $\supp x:=\{i \in [d]:x_i \neq 0\}$.
Let $G$ be a graph and $L:V(G) \rightarrow 2^{[d]}$ be a map.
A point configuration $p:V(G) \rightarrow \mathbb{R}^d$ is \emph{$L$-sparse} if $\supp p(v) \subseteq L(v)$ for every $v \in V(G)$.
We say that an $L$-sparse point configuration $p:V(G) \rightarrow \mathbb{R}^d$ is \emph{generic} over $\mathbb{Q}$ if $\{p(v)_j: v \in V(G), j \in L(v)\}$ is algebraically independent over $\mathbb{Q}$.
A graph $G$ is \emph{$L$-sparse rigid} if  $(G,p)$ is infinitesimally rigid in $\mathbb{R}^d$ for some $L$-sparse point configuration $p$.
Note that for $\kappa:V(G) \rightarrow [m]$ and $\bm{a} \in \mathbb{Z}_{>0}^m$ with $d=\sum_{i=1}^m a_i$, $(\kappa,\bm{a})$-sparse rigidity coincides with $L_{\kappa,\bm{a}}$-sparse rigidity, where $L_{\kappa,\bm{a}}:V(G) \rightarrow \mathbb{R}^{[d]}$ is defined by partitioning $[d]$ into disjoint sets $I_1, \ldots, I_m$ with $|I_i|=a_i$ and letting $L_{\kappa,\bm{a}}(v)=I_{\kappa(v)}$.
A $(\kappa,\bm{a})$-sparse point configuration $p$ is said to be \emph{generic} if $p$ is generic as an $L_{\kappa,\bm{a}}$-sparse point configuration.
Eftekhari et al.~\cite{EJNSTW} addressed the special setting of $L$-sparse rigidity in which, given $X \subseteq V(G)$, $L(v)=[d-1]$ for $v \in X$ and $L(v)=[d]$ for $v \not\in X$, and they gave a combinatorial characterization when $d=2$.
Cook et al.~\cite{3-polytope} gave a combinatorial characterization for $(\kappa,\bm{a})$-sparse rigidity of maximal planar graphs $G$ with $\bm{a}=(2,1)$ and $\kappa:V(G) \rightarrow \{1,2\}$ in which $\kappa^{-1}(2)$ is a stable set in $G$.

A set of points $X \subseteq \mathbb{R}^d$ is \emph{affinely independent} if the dimension of the affine span of $X$ is $|X|-1$.
\begin{lemma} \label{lem:gen_pos}
Let $G$ be a graph and $U \subseteq V(G)$ be a subset of vertices with size at most $d+1$.
Let $L:V(G) \rightarrow 2^{[d]}$ be a map, and $p:V(G) \rightarrow \mathbb{R}^d$ be a generic $L$-sparse point configuration. 
Then $p(U)$ is affinely independent if and only if $|\bigcup_{v \in W} L(v)|+1 \geq |W|$ for every subset $W \subseteq U$.
\end{lemma}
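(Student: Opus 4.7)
The plan is to reduce affine independence to a linear-algebraic statement and then to a bipartite matching argument. First I would let $n=|U|$ and form the $n\times(d+1)$ matrix $M$ whose row indexed by $v\in U$ is $(p(v),1)$. Then $p(U)$ is affinely independent if and only if $\mathrm{rank}\,M=n$. The nonzero entries of $M$ lie in positions $(v,j)$ with $j\in L(v)\cup\{d+1\}$: column $d+1$ contains only the constant $1$, while the other potential nonzero entries $\{p(v)_j:v\in U,\,j\in L(v)\}$ are algebraically independent over $\mathbb{Q}$ by genericity.

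Next I would invoke the rank--matching principle for matrices with prescribed support: $\mathrm{rank}\,M$ equals the size of a maximum matching in the bipartite graph $B$ with parts $U$ and $[d+1]$ and edges $\{(v,j):j\in L(v)\cup\{d+1\}\}$. The easy inequality (rank at most matching) is Frobenius--K\"onig. For the reverse, take any matching of size $k$, consider the induced $k\times k$ submatrix, and expand its determinant via the Leibniz formula. Each surviving permutation $\sigma$ contributes a monomial $\prod_v M_{v,\sigma(v)}$ that is a product of distinct indeterminates together with possibly one factor of $1$ (for the at most one row sent to column $d+1$). Since $\sigma$ is recovered from the set of positions $\{(v,\sigma(v))\}$ it uses, distinct permutations yield distinct monomials, so no cancellation occurs; the determinant is therefore a nonzero polynomial in the algebraically independent entries and hence does not vanish at $p$.

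Combining these steps, $\mathrm{rank}\,M=n$ is equivalent to the existence of a matching in $B$ saturating $U$, and by Hall's theorem this is equivalent to $|\bigcup_{v\in W}(L(v)\cup\{d+1\})|\geq|W|$ for every $W\subseteq U$. Because $L(v)\subseteq[d]$, for nonempty $W$ the left-hand side equals $|\bigcup_{v\in W}L(v)|+1$, and the $W=\emptyset$ instance is trivial; this is exactly the stated inequality, completing the equivalence. The one mildly delicate step is the determinant-nonvanishing argument: one must verify that the constant $1$'s in column $d+1$ cannot conspire with the generic entries to force cancellations, which is precisely what the monomial-distinctness observation handles.
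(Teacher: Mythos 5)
Your proof is correct and follows essentially the same route as the paper's: both reduce affine independence to the rank of the $1$-augmented matrix, identify that rank with the maximum matching size in the bipartite support graph via a Leibniz/matching expansion of subdeterminants (using genericity to rule out cancellation, including the all-ones column), and finish with Hall's theorem. The only cosmetic difference is that you work with the transpose of the paper's matrix and spell out the no-cancellation step slightly more explicitly.
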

\begin{proof}
Let $k=|U|$ and $U=\{v_1,\ldots,v_k\}$. Let $A \in \mathbb{R}^{(d+1)\times k}$ be the matrix defined by 
\begin{align*}
A=
\begin{bmatrix}
p(v_1) & \cdots & p(v_k) \\
1 & \cdots & 1 \\
\end{bmatrix}.
\end{align*}
Then $p(U)$ is affinely independent if and only if $\rank A=k$.
Consider the bipartite graph $G(A)$ between $U$ and $[d+1]$ such that there is an edge between $v \in U$ and $i \in [d+1]$ if and only if $A_{i,v} \neq 0$.

For $R \subseteq [d+1]$ with $|R|=k$, let $P_R$ be the determinant of submatrices of $A$ indexed by $R$.
Then 
\begin{equation} \label{eq:submat}
P_R=\sum_{\sigma} s_\sigma \prod_{j=1}^k A_{v_j,\sigma(v_j)},
\end{equation}
where the sum is taken over all perfect matchings $\sigma$ of $G(A)[U \sqcup R]$ and $s_\sigma=\pm 1$.
When $P_R$ is considered as a polynomial of $\{p(v)_i:v \in U,i \in L(v)\}$, each monomial appears at most once in the summand of the right hand side of (\ref{eq:submat}). Since $p$ is a generic $L$-sparse point configuration, $P_R \neq 0$ if and only if there is a perfect matching in $G(A)[U \sqcup R]$.

Hence $\rank A=k$ if and only if there is a size $|U|$ matching in the bipartite graph $G(A)$, which is equivalent to the condition given in the statement by Hall's theorem.
\end{proof}

Let $G$ be a graph and $L:V(G) \rightarrow 2^{[d]}$ be a map.
For $U \subseteq V(G)$, consider the following ``Hall condition'' (H):
\begin{itemize}
\item[(H)] $|\bigcup_{v \in W} L(v)|+1 \geq |W|$ holds for every subset $W \subseteq U$.
\end{itemize}

\begin{lemma} \label{lem:hall}
Let $G$ be a graph and $uv \in E(G)$ be an edge satisfying $|N_G(u) \cap N_G(v)| \geq d-1$. 
Let $L:V(G) \rightarrow 2^{[d]}$ be a map satisfying $L(u)=L(v)$, and define $L'$ as the restriction of $L$ to $V(G/uv)$.
Suppose that there is a $(d-1)$-set $C \subseteq N_G(u) \cap N_G(v)$ such that $C+u+v$ satisfies the condition (H) with respect to $G$ and $L$.
Then $G$ is $L$-sparse rigid if $G/uv$ is $L'$-sparse rigid.
\end{lemma}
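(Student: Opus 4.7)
The plan is to derive the $L$-sparse analogue of the Vertex Splitting Lemma (Lemma~\ref{lem:vertex splitting}). Starting from an $L'$-sparse point configuration $p:V(G/uv) \to \mathbb{R}^d$ for which $(G/uv,p)$ is infinitesimally rigid, I would extend $p$ to $p':V(G) \to \mathbb{R}^d$ by setting $p'(v)=p(u)+tz$ for suitable $z \in \mathbb{R}^d$ and $t \in \mathbb{R}$, and then invoke Lemma~\ref{lem:vertex splitting}. Since $L(u)=L(v)$ and $\supp p(u) \subseteq L(u)$, the $L$-sparsity of $p'$ reduces to the single requirement $\supp z \subseteq L(u)$; thus to apply vertex splitting I must find $z$ lying in $K:=\{x \in \mathbb{R}^d:\supp x \subseteq L(u)\}$ and avoiding the hyperplane $S:=\spa\{p(w)-p(u):w \in C\}$.

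For the existence of $p$, I would take a generic $L'$-sparse configuration making $(G/uv,p)$ infinitesimally rigid; such a $p$ exists because the rank of $R(G/uv,p)$ is maximized on a Zariski-open subset of the $L'$-sparse parameter space, which is nonempty by hypothesis. Since condition (H) holds for $C+u+v$ with respect to $L$, it holds a fortiori for $C+u$ with respect to $L'$, and Lemma~\ref{lem:gen_pos} then guarantees that $p(C+u)$ is affinely independent, so $\{p(w)-p(u):w \in C\}$ is a linearly independent set of $d-1$ vectors. This supplies one hypothesis of Lemma~\ref{lem:vertex splitting} and shows $\dim S=d-1$.

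The main obstacle is to produce $z \in K \setminus S$, equivalently to show $K \not\subseteq S$. Let $J:=[d] \setminus L(u)$, and let $\xi \in \mathbb{R}^d$ span the one-dimensional orthogonal complement of $S$. I claim $\xi|_{L(u)} \neq 0$. If instead $\supp \xi \subseteq J$, then $\xi \cdot p(u)=0$, so $\xi \cdot (p(w)-p(u))=0$ yields $\xi \cdot p(w)=0$ for all $w \in C$, meaning $\xi|_J$ is a nonzero element of $\mathbb{R}^J$ orthogonal to every $p(w)|_J$ with $w \in C$. By a Leibniz-expansion argument mirroring the proof of Lemma~\ref{lem:gen_pos}, for generic $L'$-sparse $p$ the vectors $\{p(w)|_J:w \in C\}$ span $\mathbb{R}^J$ as long as the bipartite graph between $C$ and $J$ with edge set $\{(w,j):j \in L(w) \cap J\}$ admits a matching saturating $J$. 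I would verify Hall's condition for an arbitrary $T \subseteq J$ by applying (H) to $W=\{u,v\} \cup \{w \in C:L(w) \cap T=\emptyset\}$: every member of $W$ has its $L$-value contained in $[d] \setminus T$, so $|\bigcup_{v' \in W}L(v')| \leq d-|T|$, and (H) gives $d-|T|+1 \geq |W|=2+|\{w \in C:L(w) \cap T=\emptyset\}|$, which rearranges to $|\{w \in C:L(w) \cap T \neq \emptyset\}| \geq |T|$. This contradicts the assumption on $\xi$, so $\xi|_{L(u)} \neq 0$ and some $z \in K \setminus S$ exists.

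Applying Lemma~\ref{lem:vertex splitting} with this $z$ produces $t \in \mathbb{R}$ for which the extension $p'$ given by $p'(v)=p(u)+tz$ makes $(G,p')$ infinitesimally rigid in $\mathbb{R}^d$. Since $p(u),z \in K$, we have $p'(v) \in K=\{x:\supp x \subseteq L(v)\}$, so $p'$ is $L$-sparse and $G$ is $L$-sparse rigid. I expect the main technical friction to be the Hall-type verification above, as this is precisely where both the equality $L(u)=L(v)$ and the full strength of hypothesis (H) on all of $C+u+v$ (rather than merely on $C+u$) are used.
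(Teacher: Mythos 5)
Your proof is correct and follows the same overall strategy as the paper's: take a generic $L'$-sparse configuration $p$ witnessing the infinitesimal rigidity of $G/uv$, use Lemma~\ref{lem:gen_pos} together with condition (H) to see that $\{p(w)-p(u):w\in C\}$ is linearly independent, produce a vector $z$ with $\supp z \subseteq L(v)$ outside its span, and finish with Lemma~\ref{lem:vertex splitting}, the $L$-sparsity of the extension following from $L(u)=L(v)$. Where you genuinely diverge is the production of $z$. The paper gets it in one line by applying Lemma~\ref{lem:gen_pos} a second time, to the $(d+1)$-point set $\{p(w):w\in C\cup\{u\}\}\cup\{p(u)+z\}$ for a generic $z$ with $\supp z=L(v)$: after a harmless change of variables this is a generic $L$-sparse realization of $C+u+v$, so condition (H) on $C+u+v$ yields affine independence directly, which is exactly the statement $z\notin\spa\{p(w)-p(u):w\in C\}$. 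You instead argue by duality: letting $\xi$ span the orthogonal complement of that span, you rule out $\supp\xi\subseteq J:=[d]\setminus L(u)$ by verifying Hall's condition for the bipartite graph between $C$ and $J$, which you correctly extract from (H) applied to the sets $\{u,v\}\cup\{w\in C:L(w)\cap T=\emptyset\}$ (the inclusion of both $u$ and $v$, legitimate since $L(u)=L(v)\subseteq[d]\setminus T$, is what turns the bound $|T|-1$ into the needed $|T|$). This is valid and correctly executed, and it has the merit of isolating exactly where $L(u)=L(v)$ and the full strength of (H) on $C+u+v$ (rather than on $C+u$) are used; the cost is that it essentially re-proves a special case of the matching--determinant argument already packaged in Lemma~\ref{lem:gen_pos}, so the paper's second invocation of that lemma is the more economical route.
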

\begin{proof}
Suppose that $G/uv$ is $L'$-sparse rigid.
Let $p:V(G/uv) \rightarrow \mathbb{R}^d$ be a generic $L'$-sparse point configuration. Then $(G/uv,p)$ is infinitesimally rigid in $\mathbb{R}^d$.
By Lemma~\ref{lem:gen_pos}, the condition (H) of $C+u+v$ implies that the set of points $\{p(w) : w\in C \cup \{u\}\}$ is affinely independent. Hence $\{p(w)-p(u): w \in C\}$ is linearly independent.

Among vectors $z \in \mathbb{R}^{d}$ with $\supp(z)=L(v)$, pick a generic one $z$.
Then, by Lemma~\ref{lem:gen_pos}, the condition (H) of $C+u+v$ implies that $\{p(w):w\in C \cup \{u\} \} \cup \{p(u)+z\}$ is affinely independent. Hence $z$ is not in the linear span of $\{p(w)-p(u):w\in C\}$.
Thus by Lemma~\ref{lem:vertex splitting}, there is an extension $p'$ of $p$ such that $p'(v)=p(u)+tz$ for some $t \in \mathbb{R}$ such that $(G,p')$ is infinitesimally rigid.
As $p'$ is an $L$-sparse point configuration, $G$ is $L$-rigid.
\end{proof}
As a special case, we have the following corollary for $(\kappa,\bm{a})$-sparse rigidity.
\begin{corollary} \label{cor:hall}
Let $G$ be a graph and $uv \in E(G)$ be an edge satisfying $|N_G(u) \cap N_G(v)| \geq d-1$. 
Let $\kappa:V(G) \rightarrow [m]$ be a map satisfying $\kappa(u)=\kappa(v)$, and let $\kappa'$ be the restriction of $\kappa$ to $V(G/uv)$.
For $U \subseteq V(G)$, define $t_\kappa(U):=(|U \cap \kappa^{-1}(i)|)_i \in \mathbb{Z}_{\geq 0}^{m}$.
For $\bm{a} \in \mathbb{Z}_{> 0}^m$, suppose that there is a $(d-1)$-set $C \subseteq N_G(u) \cap N_G(v)$ such that $t_\kappa(C+u+v)=\bm{a}+\bm{e}_j$ for some $j \in [m]$, where $\bm{e}_j$ denotes the $j$th unit coordinate vector.
Then $G$ is $(\kappa,\bm{a})$-sparse rigid if $G/uv$ is $(\kappa',\bm{a})$-sparse rigid.
\end{corollary}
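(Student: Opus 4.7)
The plan is to deduce Corollary~\ref{cor:hall} directly from Lemma~\ref{lem:hall} by passing through the identification of $(\kappa,\bm{a})$-sparse rigidity with $L_{\kappa,\bm{a}}$-sparse rigidity stated in the paragraph following the definition of $L$-sparse rigidity. Concretely, I fix a partition $[d]=I_1\sqcup\cdots\sqcup I_m$ with $|I_i|=a_i$ and set $L:=L_{\kappa,\bm{a}}$ so that $L(w)=I_{\kappa(w)}$. Let $L'$ be the restriction of $L$ to $V(G/uv)$; this equals $L_{\kappa',\bm{a}}$. Since $\kappa(u)=\kappa(v)$, we have $L(u)=L(v)$, which is one of the hypotheses of Lemma~\ref{lem:hall}. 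The hypothesis that $G/uv$ is $(\kappa',\bm{a})$-sparse rigid translates to $G/uv$ being $L'$-sparse rigid, and the conclusion that $G$ is $L$-sparse rigid gives exactly $(\kappa,\bm{a})$-sparse rigidity of $G$.

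The only thing that must be checked is the Hall condition (H) for $U:=C+u+v$ with respect to $L$. Given $W\subseteq U$, let $S_W:=\{i\in[m]:\kappa^{-1}(i)\cap W\neq\emptyset\}$, so that $\bigcup_{w\in W}L(w)=\bigsqcup_{i\in S_W}I_i$ has size $\sum_{i\in S_W}a_i$. Writing $w_i:=|W\cap\kappa^{-1}(i)|$, the assumption $t_\kappa(C+u+v)=\bm{a}+\bm{e}_j$ gives $w_i\le a_i$ for $i\neq j$ and $w_j\le a_j+1$. If $j\notin S_W$ then
\[
|W|=\sum_{i\in S_W}w_i\le\sum_{i\in S_W}a_i=\Bigl|\bigcup_{w\in W}L(w)\Bigr|,
\]
while if $j\in S_W$ we get the same bound with an extra $+1$ from $w_j$; in either case (H) holds.

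With (H) established and the remaining hypotheses ($uv\in E(G)$, $|N_G(u)\cap N_G(v)|\ge d-1$, and $C\subseteq N_G(u)\cap N_G(v)$ of size $d-1$) carried over verbatim from the assumptions, Lemma~\ref{lem:hall} applies and yields $L$-sparse rigidity of $G$, hence $(\kappa,\bm{a})$-sparse rigidity. There is no real obstacle here; the whole content of the corollary is the bookkeeping in the previous paragraph, which converts the combinatorial colour-type condition $t_\kappa(C+u+v)=\bm{a}+\bm{e}_j$ into the Hall-type condition (H) needed to feed into Lemma~\ref{lem:gen_pos} and Lemma~\ref{lem:vertex splitting} inside the proof of Lemma~\ref{lem:hall}.
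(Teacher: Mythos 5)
Your proposal is correct and follows exactly the paper's route: the paper's proof of this corollary is a one-line reduction to Lemma~\ref{lem:hall} via the observation that $t_\kappa(C+u+v)=\bm{a}+\bm{e}_j$ implies the Hall condition (H) for $L_{\kappa,\bm{a}}$, which is precisely the bookkeeping you carry out explicitly (and correctly). No differences worth noting.
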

\begin{proof}
One can easily check that in the setting of $(\kappa,\bm{a})$-sparse rigidity, $(d+1)$-set $U$ satisfies the condition (H) if and only if $t_\kappa(U)=\bm{a}+\bm{e}_j$ for some $j \in [m]$. Now the statement follows from Lemma~\ref{lem:hall}.
\end{proof}

\begin{proof}[Proof of Theorem~\ref{thm:a-bal}]
We prove the statement by the induction on $|V(\Delta)|$.
When $\Delta$ is trivial, $G(\Delta)=K_d$, a complete graph on $d$ vertices. By Lemma~\ref{lem:gen_pos}, $p(V(G))$ spans $(d-1)$-dimensional affine space for any generic $(\kappa,\bm{a})$-sparse $p$.
Hence $G(\Delta)$ is $(\kappa,\bm{a})$-sparse rigid.

Suppose that $\Delta$ is nontrivial.
By $a_1 \geq 2$, we can pick $uv \in E(G)$ with $\kappa(u)=\kappa(v)=1$.
Let $\Delta_1^+,\ldots,\Delta_t^+$ be the minimal $(d-1)$-cycle complexes given in Lemma~\ref{lem:Fog_dec} with respect to $\Delta$ and $uv$.
For $U \subseteq V(G)$, define the type of $U \subseteq V(G)$ by $t_\kappa(U):=(|U \cap \kappa^{-1}(i)|)_i \in \mathbb{Z}_{\geq 0}^{m}$.
\begin{claim}\label{claim:a-bal}
For each $i$, there is a $(d-1)$-set $C \subseteq N_{G(\Delta_i^+)}(u) \cap N_{G(\Delta_i^+)}(v)$ such that $t_\kappa(C+u+v)=\bm{a}+\bm{e}_k$ for some $k \in [m]$.
\end{claim}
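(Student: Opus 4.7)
The plan is to fix any facet $F$ of $\Delta_i^+$ containing $\{u,v\}$ (which exists by Lemma~\ref{lem:Fog_dec}(a)), determine the possible color distributions $t_\kappa(F)$, and then obtain $C$ by swapping one vertex of $F-u-v$ using Lemma~\ref{lem:basic}(ii) applied to $\Delta_i^+$. Since $\kappa(u)=\kappa(v)=1$ we have $|F\cap\kappa^{-1}(1)|\geq 2$. If $F\in\Delta$ then $t_\kappa(F)=\bm{a}$; if $F\notin\Delta$ then Lemma~\ref{lem:Fog_dec}(b) gives $F-u,F-v\in\Delta$, so the $\bm{a}$-balanced property forces $|F\cap\kappa^{-1}(1)|\leq a_1+1$ and $|F\cap\kappa^{-1}(j)|\leq a_j$ for $j\neq 1$. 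Combined with $|F|=d=\sum_j a_j$, this leaves only two possible types: (A) $t_\kappa(F)=\bm{a}$, or (B) $t_\kappa(F)=\bm{a}+\bm{e}_1-\bm{e}_c$ for some unique $c\in[m]\setminus\{1\}$.

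In Case (A), pick any $w'\in F-u-v$ (nonempty since $d\geq 3$) and apply Lemma~\ref{lem:basic}(ii) to $\Delta_i^+$ to find another facet $F^*=F-w'+z$. Then $C:=(F-u-v)\cup\{z\}$ has size $d-1$ and is contained in $N_{G(\Delta_i^+)}(u)\cap N_{G(\Delta_i^+)}(v)$, since each element of $F-u-v$ shares $F$ with $u,v$ while $z$ shares $F^*$ with $u,v$. Moreover $t_\kappa(C+u+v)=t_\kappa(F)+\bm{e}_{\kappa(z)}=\bm{a}+\bm{e}_{\kappa(z)}$, so $k=\kappa(z)$ works.

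Case (B) is the main obstacle, since I must produce a color-$c$ common neighbor of $u,v$ outside $F$. The key trick is to choose $w'\in F-u-v$ with $\kappa(w')=c$; such $w'$ exists because $t_\kappa(F-u-v)=\bm{a}-\bm{e}_1-\bm{e}_c$ shows that $F-u-v$ contains $a_c-1\geq 1$ color-$c$ vertices, and this is precisely where the hypothesis $a_c\geq 2$ is used. Applying Lemma~\ref{lem:basic}(ii) to $F-w'$ yields another facet $F^*=F-w'+z$ of $\Delta_i^+$, which still contains $u$ and $v$. The same dichotomy applied to $F^*$ shows that $t_\kappa(F^*)$ is either $\bm{a}$ or $\bm{a}+\bm{e}_1-\bm{e}_{c^*}$ for some $c^*\in[m]\setminus\{1\}$. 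On the other hand, $\bm{e}_{\kappa(z)}=t_\kappa(F^*)-t_\kappa(F-w')=t_\kappa(F^*)-\bm{a}-\bm{e}_1+2\bm{e}_c$ must be a standard basis vector; exhausting the possibilities (the value $\bm{a}$ yields $-\bm{e}_1+2\bm{e}_c$, and $\bm{a}+\bm{e}_1-\bm{e}_{c^*}$ yields $-\bm{e}_{c^*}+2\bm{e}_c$) shows the only consistent choice is $c^*=c$ and $\kappa(z)=c$. Setting $C:=(F-u-v)\cup\{z\}$ then gives $t_\kappa(C+u+v)=\bm{a}+\bm{e}_1$, so $k=1$.
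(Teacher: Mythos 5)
Your proposal is correct and follows essentially the same route as the paper: both derive the type dichotomy $t_\kappa(F)\in\{\bm{a}\}\cup\{\bm{a}+\bm{e}_1-\bm{e}_c: c\neq 1\}$ for facets of $\Delta_i^+$ containing $u,v$ from Lemma~\ref{lem:Fog_dec}(b), then swap a vertex $w'$ (chosen of color $c$ in the unbalanced case, using $a_c\geq 2$) via Lemma~\ref{lem:basic}(ii) and use the dichotomy again on the new facet to pin down $\kappa(z)$. The only cosmetic difference is that you phrase the color constraint as two explicit cases rather than the paper's single condition $(\star)$.
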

\begin{proof}[Proof of claim]
Let $F^*$ be a facet of $\Delta_i^+$ containing $u$ and $v$, which exists by Lemma~\ref{lem:Fog_dec} (a). 
By Lemma~\ref{lem:basic} (ii), for every $w \in F^* - u -v$, there exists $x (\neq w)$ such that $F^*-w+x$ is a facet of $\Delta_i^+$. 
Then $C:=F^*-u-v+x$ is included in the common neighborhood of $u$ and $v$ in $G(\Delta_i^+)$.
We show that for an appropriate choice of $w$, $t_\kappa(C+u+v)=\bm{a}+\bm{e}_k$ holds for some $k \in [m]$.

By Lemma~\ref{lem:Fog_dec} (b), for every facet $F$ of $\Delta_i^+$ containing $u$ and $v$, $F-v$ is a $(d-2)$-face of $\Delta$, and thus $t_\kappa(F-v)=\bm{a} -\bm{e}_j$ for some $j \in [m]$.
Hence, we have the following property ($\star$):
\begin{quote} 
($\star$) for every facet $F$ of $\Delta_i^+$ containing $u$ and $v$, $t_\kappa(F) = \bm{a}-\bm{e}_j+\bm{e}_1$ for some $j \in [m]$.
\end{quote}
By property ($\star$) of $F^*$, $t_\kappa(F^*) = \bm{a}-\bm{e}_j+\bm{e}_1$ for some $j \in [m]$.
If $j=1$, for any choice of $w$, we have $t_\kappa(C+u+v)=\bm{a}+\bm{e}_{\kappa(x)}$ as desired.
If $j \neq 1$, pick $w$ from $F^* \cap \kappa^{-1}(j)$, which is not empty by $a_j \geq 2$. Since $F^*-w+x$ also satisfies ($\star$), it follows that $\kappa(x)=j$. Hence we have $t_\kappa(C+u+v)=\bm{a}+\bm{e}_1$ as desired.
\end{proof}

Let $\kappa_i$ (resp. $\kappa_i'$) be the restriction of $\kappa$ to $V(\Delta_i^+)$ (resp. $V(\Delta_i^+/uv)$).
Then $\Delta_i^+/uv$ is $\bm{a}$-balanced and $\kappa_i'$ is an $\bm{a}$-coloring of $\Delta_i^+/uv$.
$\Delta_i^+/uv$ is also a minimal $(d-1)$-cycle complex by Lemma~\ref{lem:Fog_dec} (c).
Thus by induction hypothesis, $G(\Delta_i^+/uv)$ is $(\kappa_i',\bm{a})$-sparse rigid.
By Corollary~\ref{cor:hall} and Claim~\ref{claim:a-bal}, $G(\Delta_i^+)$ is $(\kappa_i,\bm{a})$-sparse rigid for each $i$.

By Lemma~\ref{lem:gen_pos} and the property ($\star$), for a facet $F$ of $\Delta_i^+$ and a generic $(\kappa,\bm{a})$-sparse point configuration $p$, $p(F)$ spans a $(d-1)$-dimensional affine subspace.
Hence by Lemma~\ref{lem:Fog_dec} (d) and (e), we can deduce that $G(\Delta)=\bigcup_{i=1}^t G(\Delta_i^+)$ is $(\kappa,\bm{a})$-sparse rigid by the repeated application of Lemma~\ref{lem:gluing}.
\end{proof}

\section{$(\kappa,\bm{a})$-sparse rigidity of homology manifolds} \label{sec:7}
In this section, we prove that Conjecture~\ref{conj:main} holds for any $\bm{a}\neq(d-1),(1,d-1)$ if $d \geq 4$ and $\Delta$ is a homology $(d-1)$-manifolds.

Conjecture~\ref{conj:main} was verified for balanced simplicial $2$-spheres by Cook et al.~\cite{3-polytope}.
\begin{theorem}[Cook et al.~\cite{3-polytope}] \label{thm:3-poly}
For a balanced simplicial $2$-sphere $\Delta$ with a proper $3$-coloring $\kappa$, $G(\Delta)$ is $(\kappa,\bm{a})$-sparse rigid.
\end{theorem}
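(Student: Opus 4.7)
My plan is to prove this by induction on $|V(\Delta)|$, with the octahedron (the boundary complex $\partial \mathcal{C}_3^*$, the smallest balanced simplicial $2$-sphere) as the base case. For the base case, I would take the standard cross-polytope realization placing the two color-$i$ vertices at $\pm \alpha_i \bm{e}_i$ for generic positive scalars $\alpha_i$; this is $(\kappa,\bm{a})$-sparse by construction, and infinitesimally rigid in $\mathbb{R}^3$ either by a direct rank computation on the $12 \times 18$ rigidity matrix (modulo the $6$-dimensional space of trivial motions) or by Cauchy--Dehn infinitesimal rigidity applied to the convex cross-polytope.

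For the inductive step, I would use the classical characterization of balanced simplicial $2$-spheres as the \emph{Eulerian plane triangulations} (a plane triangulation is $3$-colorable if and only if every vertex has even degree). For $|V(\Delta)| > 6$, the degree sum $\sum_{v} \deg(v) = 6|V(\Delta)| - 12$ together with evenness of every degree forces the existence of a vertex $v$ of degree exactly $4$. The link of $v$ is then a $4$-cycle $[a,b,c,d]$ whose vertices alternate between the two colors different from $\kappa(v)$, so in particular $\kappa(a) = \kappa(c)$ and $\kappa(b) = \kappa(d)$. The heart of the induction would be a balance-preserving local reduction at $v$ producing a smaller balanced $2$-sphere $\Delta'$ whose $(\kappa,\bm{a})$-sparse rigidity lifts back to $\Delta$ via the vertex-splitting Lemma~\ref{lem:vertex splitting}; the Hall-type affine-independence condition needed for that lift follows from the fact that the relevant neighborhood of $v$ contains vertices of all three colors, in the spirit of Lemma~\ref{lem:hall}.

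The main obstacle is finding such a balance-preserving reduction: the standard $2$-sphere simplifications (edge contraction, edge flip, Henneberg $4$-to-$2$ move) each produce a monochromatic edge across $\{a,c\}$ or $\{b,d\}$ and thereby destroy balancedness. A promising candidate is a combined move that identifies $v$ with a suitable neighbor while simultaneously reorganizing the four triangles of $\st_\Delta(v)$, in the style of the reduction operations used in the classification of Eulerian triangulations; some case analysis on the valences of $a,b,c,d$ seems unavoidable to guarantee that no spurious parallel edges or collapsed triangles arise, a condition that should be provable using the $3$-connectivity of $G(\Delta)$ from Steinitz's theorem. If no clean combinatorial reduction is available, I would fall back on an algebraic route via Corollary~\ref{cor:inj}, recasting the claim as a weak Lefschetz property of $\mathbb{R}[\Delta]/\Theta \mathbb{R}[\Delta]$ for a $(1,1,1)$-colored l.s.o.p.~$\Theta$ and the form $\omega = \sum_{v}x_v$, and exploiting the semicontinuity of the rank of the multiplication map $(\times \omega)$ to reduce to verifying injectivity at a single well-chosen realization per combinatorial type.
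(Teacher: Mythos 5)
First, a point of context: the paper does not prove this statement at all --- Theorem~\ref{thm:3-poly} is imported verbatim from Cook et al.~\cite{3-polytope} and used as a base case in the proof of Theorem~\ref{thm:C_d}. So there is no in-paper proof to compare against; your attempt has to stand on its own, and as written it does not. Your base case (the octahedron realized at $\pm\alpha_i \bm{e}_i$, rigid by Cauchy--Dehn) and your degree count forcing a vertex $v$ of degree $4$ in any larger Eulerian triangulation are both correct. But the heart of the induction --- a balance-preserving reduction at $v$ whose inverse preserves $(\kappa,(1,1,1))$-sparse infinitesimal rigidity --- is exactly the step you leave open, and it is the entire difficulty of the theorem.

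The gap is not merely that the standard moves ``destroy balancedness''; the vertex-splitting route you propose is structurally unavailable here. Lemma~\ref{lem:hall} and Corollary~\ref{cor:hall} require contracting an edge $uv$ with $L(u)=L(v)$, i.e.\ a monochromatic edge, because the split vertex is placed at $p(u)+tz$ with $\supp z = L(v)$, and this point lies in $H_{\kappa(v)}$ only if $p(u)$ does too. In a balanced $2$-sphere with $\bm{a}=(1,1,1)$ every edge is bichromatic, so no such contraction exists --- this is precisely why the hypothesis $a_i\geq 2$ appears in Theorem~\ref{thm:a-bal} and why the $(1,1,1)$ case must be handled by a genuinely different argument (in Cook et al.\ the reduction identifies the two \emph{like-colored, non-adjacent} vertices $a$ and $c$ of the link quadrilateral of $v$ and deletes $v$, which requires its own rigidity-lifting lemma and case analysis, not Lemma~\ref{lem:vertex splitting}). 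Your fallback is also not a proof: semicontinuity of $\rank(\times\omega)$ only transfers maximality of rank from one sparse realization to a generic sparse realization, so ``verifying injectivity at a single well-chosen realization per combinatorial type'' is a restatement of the problem, not a reduction of it.
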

Kalai~\cite{Kal} defined a class $\mathcal{C}_d$ of $(d-1)$-dimensional pseudomanifolds for $d\geq 3$ as follows: $\mathcal{C}_3$ is the class of simplicial $2$-sphere, and for $d\geq 4$, a $(d-1)$-dimensional pseudomanifold $\Delta$ belongs to $\mathcal{C}_d$ if $\lk_\Delta(v) \in \mathcal{C}_{d-1}$ for every $v \in V(\Delta)$.
For $d\geq 4$, $\mathcal{C}_d$ includes all homology $(d-1)$-manifolds (over any field).
Kalai~\cite{Kal} showed that if $\Delta \in \mathcal{C}_d$ ($d \geq 3$), $G(\Delta)$ is rigid in $\mathbb{R}^d$.
In the case of $(\kappa,\bm{a})$-sparse rigidity, we have the following theorem.
\begin{theorem} \label{thm:C_d}
For $d \geq 3$, let $\bm{a}=(a_1,\ldots,a_m) \in \mathbb{Z}_{>0}^m$ be a positive integer vector with $\sum_{i=1}^m a_i=d$.
Let $\Delta$ be an $\bm{a}$-balanced pseudomanifold satisfying $\Delta \in \mathcal{C}_d$, and let $\kappa$ be an $\bm{a}$-coloring of $\Delta$.
If $\bm{a}\neq (d-1,1), (1,d-1)$, $G(\Delta)$ is $(\kappa,\bm{a})$-sparse rigid.
\end{theorem}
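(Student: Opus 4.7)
The plan is induction on $d$. For the base case $d=3$, the only cases that remain after the exclusions $\bm{a}=(2,1),(1,2)$ are $\bm{a}=(1,1,1)$, handled by Theorem~\ref{thm:3-poly}, and $\bm{a}=(3)$, where $(\kappa,\bm{a})$-sparse rigidity coincides with ordinary rigidity in $\mathbb{R}^3$ and $\Delta\in\mathcal{C}_3$ is a simplicial $2$-sphere, so Gluck's theorem applies.

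For the inductive step $d\geq 4$, if every $a_i\geq 2$ the conclusion is immediate from Theorem~\ref{thm:a-bal}. Otherwise some $a_j=1$, and since $\bm{a}\neq(d-1,1),(1,d-1)$ this forces $m\geq 3$. I would select a color $c\in[m]$: choose one with $a_c\geq 2$ whenever available, and otherwise ($\bm{a}=(1,\ldots,1)$) any color. Letting $\bm{a}^-:=\bm{a}-\bm{e}_c$ with any zero coordinate removed, one checks that $\bm{a}^-$ has either $m\geq 3$ positive entries (first case) or $d-1\geq 3$ positive entries (second case), so $\bm{a}^-\neq(d-2,1),(1,d-2)$. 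Hence the inductive hypothesis is available for the link of any color-$c$ vertex, and in the all-ones case, for the link of every vertex.

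Fix a generic $(\kappa,\bm{a})$-sparse point configuration $p$ on $V(\Delta)$. The key step is to show that $(G(\st_\Delta(v)),p|_{V(\st_\Delta(v))})$ is infinitesimally rigid in $\mathbb{R}^d$ for each $v$ whose link is eligible above. To see this, apply Lemma~\ref{lem:coning} with the hyperplane $H=\{x\in\mathbb{R}^d:x_{\kappa(v),1}=0\}$, where $(x_{\kappa(v),1},\ldots,x_{\kappa(v),a_{\kappa(v)}})$ denote the coordinates of the $H_{\kappa(v)}$-block. For $u\in V(\lk_\Delta(v))$ of color $i\neq\kappa(v)$ one has $p(u)\in H_i\subseteq H$, so $p_H(u)=p(u)$; for $u$ of color $\kappa(v)$ (possible only when $a_{\kappa(v)}\geq 2$) a direct computation gives $p_H(u)\in\spa(e_{\kappa(v),2},\ldots,e_{\kappa(v),a_{\kappa(v)}})$. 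Thus $p_H$ is a $(\kappa|_{V(\lk_\Delta(v))},\bm{a}^-)$-sparse configuration in $H\cong\mathbb{R}^{d-1}$, and it inherits genericity from $p$ since the rational map $p\mapsto p_H$ is dominant. Since $\lk_\Delta(v)\in\mathcal{C}_{d-1}$, the inductive hypothesis yields rigidity of $(G(\lk_\Delta(v)),p_H)$, and Lemma~\ref{lem:coning} converts this into rigidity of the star.

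The main obstacle is assembling these rigid stars into a rigid framework for all of $G(\Delta)$, since when $a_c=1$ two color-$c$ stars need never share a facet. I would resolve this in two subcases. If $\bm{a}=(1,\ldots,1)$, every star is rigid; using connectivity of $G(\Delta)$ I order $V(\Delta)=\{v_1,\ldots,v_n\}$ so each $v_i$ is adjacent to some earlier $v_j$, and the corresponding stars share the facet containing $\{v_i,v_j\}$. Otherwise $a_c\geq 2$: using strong connectivity of the pseudomanifold $\Delta$ I order the facets $F_1,\ldots,F_N$ with $|F_i\cap F_{i-1}|=d-1$, then pick a color-$c$ vertex $w_i\in F_{i-1}\cap F_i$; this exists because $F_{i-1}\cap F_i$ contains at least $a_c-1\geq 1$ color-$c$ vertices. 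In both subcases, the shared facet $F$ satisfies $\bigcup_{u\in F}L_{\kappa,\bm{a}}(u)=[d]$, so by Lemma~\ref{lem:gen_pos} $p(F)$ is affinely independent and spans a $(d-1)$-dimensional affine subspace, allowing repeated application of Lemma~\ref{lem:gluing}. The chosen collection of stars covers every facet of $\Delta$, so the union of their graphs equals $G(\Delta)$, which is therefore $(\kappa,\bm{a})$-sparse rigid.
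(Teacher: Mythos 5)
Your proof is correct and follows essentially the same route as the paper's: induction on $d$ with base cases Gluck's theorem and Theorem~\ref{thm:3-poly}, Theorem~\ref{thm:a-bal} when every $a_i\geq 2$, coning over vertex links to obtain rigid stars, and gluing the stars of a chosen color class along shared facets via Lemma~\ref{lem:gen_pos} and Lemma~\ref{lem:gluing}. The only cosmetic differences are that you unfold Corollary~\ref{cor:cone-sparse} into an explicit projection computation and organize the gluing by a walk through facets (note that strong connectivity only guarantees an ordering in which each facet meets \emph{some} earlier facet in a ridge, not necessarily $F_{i-1}$, but this does not affect the argument).
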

In the proof of Theorem~\ref{thm:C_d}, we use cone lemma for $(\kappa,\bm{a})$-sparse rigidity, which we first prove in the generality of $L$-sparse rigidity.
\begin{lemma} \label{lem:list-cone}
Let $G$ be a graph and $G*\{v\}$ be its cone graph.
Let $L:V(G) \cup \{v\} \rightarrow 2^{[d+1]}$ be a map with $L(v) \neq \emptyset$.
Suppose that there is $i \in L(v)$ such that, for each $u \in V(G)$, either $i \not\in L(u)$ or $|L(v)\setminus L(u)| \leq 1$ holds.
Define $L':V(G) \rightarrow 2^{[d+1]\setminus\{i\}}$ by $L'(u):=L(u) \cup L(v) \setminus \{i\}$ if $i \in L(u)$ and $L'(u):=L(u)$ otherwise, and identify $2^{[d+1]\setminus\{i\}}$ with $2^{[d]}$.
Then $G$ is $L'$-sparse rigid if and only if $G * \{v\}$ is $L$-sparse rigid.
\end{lemma}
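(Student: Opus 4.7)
The plan is to apply Whiteley's cone lemma (Lemma~\ref{lem:coning}) with a carefully chosen hyperplane and cone apex. Take $H:=\{x\in\mathbb{R}^{d+1}:x_i=0\}$, identified with $\mathbb{R}^d$ by dropping the $i$th coordinate; this identification is compatible with the one used to define $L'$. Choose generic nonzero scalars $c_j\in\mathbb{R}$ for $j\in L(v)$ and set $p(v):=\sum_{j\in L(v)}c_j\bm{e}_j$, so that $\supp p(v)\subseteq L(v)$ and $p(v)_i=c_i\neq 0$, hence $p(v)\notin H$. A direct calculation shows that the line through $p(v)$ and any $x\in\mathbb{R}^{d+1}$ with $x_i\neq c_i$ meets $H$ at the point $\pi(x)$ given by $\pi(x)_i=0$ and $\pi(x)_j=(c_i x_j-c_j x_i)/(c_i-x_i)$ for $j\neq i$.

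For the direction $L$-sparse rigid $\Rightarrow$ $L'$-sparse rigid, by the Zariski-openness of infinitesimal rigidity among $L$-sparse configurations I may start from a generic $L$-sparse $p$ making $(G*\{v\},p)$ infinitesimally rigid, with $p(v)$ of the above form and $p(u)_i\neq c_i$ for every $u$. The cone lemma then gives $(G,p_H)$ infinitesimally rigid, where $p_H(u):=\pi(p(u))$. A case analysis using the formula for $\pi$ shows that a nonzero $j$-coordinate of $p_H(u)$ (with $j\neq i$) forces either $p(u)_j\neq 0$ (so $j\in L(u)$) or $c_j\neq 0$ (so $j\in L(v)$); either way $j\in L'(u)$. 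Hence $p_H$ is $L'$-sparse.

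For the converse, let $q:V(G)\to\mathbb{R}^d\cong H$ be a generic $L'$-sparse configuration with $(G,q)$ infinitesimally rigid. I construct an $L$-sparse lift $p$ with $p_H=q$ by parameterizing $p(u)=(1-s_u)p(v)+s_u q(u)$. If $i\notin L(u)$, the constraint $p(u)_i=0$ forces $s_u=1$, so $p(u)=q(u)$ with $\supp p(u)\subseteq L'(u)=L(u)$. If $i\in L(u)$, then $s_u$ is unconstrained by the $i$th coordinate, but for each $j\in L(v)\setminus L(u)$ (necessarily $j\neq i$) the requirement $p(u)_j=0$ yields the single equation $s_u=c_j/(c_j-q(u)_j)$. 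The hypothesis $|L(v)\setminus L(u)|\leq 1$ is exactly what makes this system solvable: if $L(v)\subseteq L(u)$ any generic $s_u$ works, and if $L(v)\setminus L(u)=\{j^*\}$ the unique choice $s_u=c_{j^*}/(c_{j^*}-q(u)_{j^*})$ is valid. The remaining coordinates $j\notin L(u)\cup L(v)$ satisfy $p(u)_j=0$ automatically because $c_j=q(u)_j=0$. Genericity of $q$ and of the $c_j$'s makes every denominator nonzero, every $s_u\neq 0$, and every $p(u)\neq p(v)$, so $p$ is a valid cone configuration with $p_H=q$ and the cone lemma gives $(G*\{v\},p)$ infinitesimally rigid.

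The main obstacle is the lift step for vertices $u$ with $i\in L(u)$: a single scalar $s_u$ must simultaneously annihilate every forbidden coordinate of $p(u)$, and this is precisely where the hypothesis $|L(v)\setminus L(u)|\leq 1$ is essential, since two distinct indices in $L(v)\setminus L(u)$ on which $q(u)$ were both nonzero would impose two generically incompatible linear equations on $s_u$. Everything else is routine bookkeeping: tracking which scalars are forced to be zero by $L$- or $L'$-sparseness, and invoking Zariski-openness both to pick a generic $L$-sparse $p$ in one direction and a generic $L'$-sparse $q$ in the other.
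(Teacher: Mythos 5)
Your proposal is correct and follows essentially the same route as the paper: both directions use Whiteley's cone lemma with the hyperplane $\{x_i=0\}$, projecting a generic $L$-sparse configuration in one direction and lifting an $L'$-sparse configuration along lines through the apex in the other, with the hypothesis $|L(v)\setminus L(u)|\leq 1$ used exactly where you identify it, to make the single interpolation parameter $s_u$ kill the one forbidden coordinate. Your version is only slightly more explicit (writing out the projection formula and the case $L(v)\subseteq L(u)$ separately), but the argument is the paper's.
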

\begin{proof}
Identify $H:=\{x \in \mathbb{R}^{d+1}:x_i=0\}$ with $\mathbb{R}^d$.
Suppose that $G * \{v\}$ is $L$-sparse rigid.
Let $p:V(G) \cup \{v\} \rightarrow \mathbb{R}^{d+1}$ be a generic $L$-sparse configuration. 
Then $(G * \{v\},p)$ is infinitesimally rigid in $\mathbb{R}^{d+1}$. 
As $i \in L(v)$, we have $p(v) \not\in H$, and $p(u)-p(v)$ is not parallel to $H$ for any $u \in V(G)$.
For each $u \in V(G)$, let $p_H(u)$ be the intersection of $H$ and the line passing through $p(u)$ and $p(v)$. 
By Lemma~\ref{lem:coning}, $(G,p_H)$ is infinitesimally rigid in $\mathbb{R}^d$.
By the definition of $L'$, $p_H$ is $L'$-sparse.
Thus $G$ is $L'$-sparse rigid.

To see the other direction, suppose that $G$ is $L'$-sparse rigid.
There is $q:V(G) \rightarrow \mathbb{R}^d \cong H$ such that $(G,q)$ is infinitesimally rigid in $\mathbb{R}^d$ and $q$ is $L'$-sparse and $\supp q(u)=L'(u)$ for all $u \in V(G)$.
We define $p:V  \cup \{v\} \rightarrow \mathbb{R}^{d+1}$ as follows.
Let $p(v) \in \mathbb{R}^{d+1}$ be a point such that $\supp(p(v))=L(v)$ and $p(v)_j \neq q(u)_j$ for any $j \in L(v)$ and $u \in V(G)$.
For $u \in V(G)$, if $i \not \in L(u)$ or $L(u) \subseteq L(v)$, let $p(u):=q(u)$. 
For $u \in V(G)$, if $i \in L(u)$ and $L(u) \not\subseteq L(v)$, by the assumption on $L$, we must have $L(v) \setminus L(u)=\{j\}$ for some $j (\neq i)$. For such $u$, as $q(u)_j\neq0,p(v)_j$, there is a unique real number $t (\neq 0,1) \in \mathbb{R}$ such that $(tq(u)+(1-t)p(v))_j=0$, so let $p(u):=tq(u)+(1-t)p(v)$.
One can see that $p$ is $L$-sparse.
By Lemma~\ref{lem:coning}$, (G*\{v\},p)$ is infinitesimally rigid in $\mathbb{R}^{d+1}$.
Hence $G*\{v\}$ is $L$-sparse rigid.
\end{proof}
We obtain the following corollary of Lemma~\ref{lem:list-cone} for $(\kappa,\bm{a})$-sparse rigidity.
\begin{corollary} \label{cor:cone-sparse}
Let $G$ be a graph, $\kappa:V(G) \rightarrow [m]$ a map, and $\bm{a} \in \mathbb{Z}_{>0}^m$ a positive integer vector. 
Let $G*\{v\}$ be the cone graph of $G$.
Suppose that $G$ is $(\kappa,\bm{a})$-sparse rigid. We have the followings: 
\begin{itemize}
\item[(i)] For the extension $\kappa':V(G) \cup \{v\} \rightarrow [m+1]$ of $\kappa$ defined by $\kappa'(v)=m+1$, $G*\{v\}$ is $(\kappa',(\bm{a},1))$-sparse rigid.
\item[(ii)] For the extension $\kappa':V(G) \cup \{v\} \rightarrow [m]$ of $\kappa$ defined by $\kappa'(v)=i$ for some $i \in [m]$, $G*\{v\}$ is $(\kappa',\bm{a}+\bm{e}_i)$-sparse rigid.
\end{itemize}
\end{corollary}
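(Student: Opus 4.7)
The plan is to deduce both parts of the corollary directly from Lemma~\ref{lem:list-cone} by translating the $(\kappa,\bm{a})$-sparse language into the $L$-sparse language and verifying the hypotheses. Fix a partition $[d]=I_1\sqcup\cdots\sqcup I_m$ with $|I_j|=a_j$ so that $L_{\kappa,\bm{a}}(u)=I_{\kappa(u)}$ for $u\in V(G)$. In each part, I will extend this partition to a partition of $[d+1]$ and read off the induced $L:V(G)\cup\{v\}\to 2^{[d+1]}$, then apply Lemma~\ref{lem:list-cone}.

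For part (i), set $I_{m+1}=\{d+1\}$, so that $L(v)=L_{\kappa',(\bm{a},1)}(v)=\{d+1\}$ and $L(u)=I_{\kappa(u)}\subseteq[d]$ for $u\in V(G)$. Choose the distinguished index $i=d+1\in L(v)$; for every $u\in V(G)$ we have $i\notin L(u)$, so the first alternative in the hypothesis of Lemma~\ref{lem:list-cone} holds trivially. The resulting $L'$ satisfies $L'(u)=L(u)=I_{\kappa(u)}$ for all $u\in V(G)$, which after identifying $[d+1]\setminus\{d+1\}$ with $[d]$ is exactly $L_{\kappa,\bm{a}}$. Hence $L'$-sparse rigidity of $G$ coincides with $(\kappa,\bm{a})$-sparse rigidity, which is the hypothesis, and Lemma~\ref{lem:list-cone} yields the $L$-sparse (equivalently, $(\kappa',(\bm{a},1))$-sparse) rigidity of $G*\{v\}$.

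For part (ii), enlarge the $i$th color class by setting $I_i':=I_i\cup\{d+1\}$ and $I_j':=I_j$ for $j\neq i$, so $L(v)=L_{\kappa',\bm{a}+\bm{e}_i}(v)=I_i\cup\{d+1\}$ and $L(u)=I_{\kappa(u)}'$ for $u\in V(G)$. I again take the distinguished index $d+1\in L(v)$. For $u\in V(G)$ with $\kappa(u)\neq i$ we have $L(u)=I_{\kappa(u)}\subseteq[d]$, so $d+1\notin L(u)$; for $u\in V(G)$ with $\kappa(u)=i$ we have $L(u)=L(v)$, so $|L(v)\setminus L(u)|=0\leq 1$. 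Thus the hypothesis of Lemma~\ref{lem:list-cone} is satisfied. Computing $L'$: if $\kappa(u)\neq i$ then $L'(u)=L(u)=I_{\kappa(u)}$; if $\kappa(u)=i$ then $L'(u)=L(u)\cup L(v)\setminus\{d+1\}=I_i$. In either case $L'(u)=I_{\kappa(u)}$, and after identifying $[d+1]\setminus\{d+1\}$ with $[d]$ this recovers $L_{\kappa,\bm{a}}$. Therefore $L'$-sparse rigidity of $G$ is $(\kappa,\bm{a})$-sparse rigidity of $G$ (our hypothesis), and Lemma~\ref{lem:list-cone} produces the $(\kappa',\bm{a}+\bm{e}_i)$-sparse rigidity of $G*\{v\}$.

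There is no real obstacle beyond bookkeeping: the only point to watch is that the new coordinate $d+1$ is chosen as the distinguished index $i$ in Lemma~\ref{lem:list-cone}, since on $V(G)$ the coordinate $d+1$ is either absent (part (i), and part (ii) for vertices not of color $i$) or appears only in tandem with the rest of $L(v)$ (part (ii) for vertices of color $i$); this is precisely what makes the dichotomy ``$i\notin L(u)$ or $|L(v)\setminus L(u)|\leq 1$'' hold, and it also ensures that $L'$ collapses cleanly onto the original $(\kappa,\bm{a})$-list so that the hypothesis can be invoked.
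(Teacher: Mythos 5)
Your proposal is correct and is exactly the argument the paper intends: the paper states this corollary as an immediate consequence of Lemma~\ref{lem:list-cone} without writing out the details, and your verification (choosing the new coordinate $d+1$ as the distinguished index, checking the dichotomy, and observing that $L'$ collapses to $L_{\kappa,\bm{a}}$) is precisely the omitted bookkeeping. No gaps.
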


\begin{proof}[Proof of Theorem~\ref{thm:C_d}]
If $m=1$, the statement is the result of Kalai~\cite{Kal}. 
If $m=2$, the statement follows from Theorem~\ref{thm:a-bal}.
We prove the statement for $m \geq 3$ by the induction on $d$.
If $d=3$, we have $\bm{a}=(1,1,1)$, and the statement follows from Theorem~\ref{thm:3-poly}.
Suppose that $d \geq 4$ and $\Delta \in \mathcal{C}_d$ is $\bm{a}$-balanced. Let $\kappa$ be an $\bm{a}$-coloring of $\Delta$.
Define $U \subseteq V(G)$ as follows:
if $a_j=1$ for all $j$, let $U:=\kappa^{-1}(\{1,2\})$, and
if $a_j \geq 2$ for some $j \in [m]$, let $U:=\kappa^{-1}(j)$.

\begin{claim} \label{claim:star}
For each $v \in U$, $G(\st_\Delta(v))$ is $(\kappa|_{V(\st_\Delta(v))},\bm{a})$-sparse rigid.
\end{claim}
\begin{proof}[Proof of claim]
First consider the case in which $a_j=1$ for all $j$. In this case $m=d \geq 4$.
Let $\kappa':V(\lk_\Delta(v)) \rightarrow [m-1]$ be the restriction of $\kappa$ to $V(\lk_\Delta(v))$, where $[m-1]$ is identified with $[m]\setminus\{\kappa(v)\}$.
Then $\kappa'$ is a $\bm{b}$-coloring of $\lk_\Delta(v)$, where $\bm{b}=(1,\ldots,1) \in \mathbb{Z}^{m-1}$.
As $\lk_\Delta(v) \in \mathcal{C}_{d-1}$, by the induction hypothesis, $G(\lk_\Delta(v))$ is $(\kappa',\bm{b})$-sparse rigid.
Thus by Corollary~\ref{cor:cone-sparse} (i), the claim follows.

Now consider the case in which $a_j\geq2$ for some $j$.
Then $\kappa|_{V(\lk_\Delta(v))}$ is an $(\bm{a}-\bm{e}_j)$-coloring of $\lk_\Delta(v)$, so $G(\lk_\Delta(v))$ is $(\kappa|_{V(\lk_\Delta(v))},\bm{b})$-sparse rigid by the induction hypothesis.
Thus by Corollary~\ref{cor:cone-sparse} (ii), the claim follows.
\end{proof}

By definition of $U$, $|U \cap F| \geq 2$ for every facet $F$ of $\Delta$.
As $\Delta$ is a pseudomanifold, $\Delta$ is strongly connected. Hence $G(\Delta)[U]$ is connected.
Thus, the vertices of $U$ can be ordered as $v_1,\ldots,v_t$ in such a way that $\bigcup_{j<i} \st_\Delta(v_j)$ and $\st_\Delta(v_i)$ share at least one facet for each $i\geq 2$.
By Lemma~\ref{lem:gen_pos}, for any generic $(\kappa,\bm{a})$-sparse configuration $p:V(\Delta)\rightarrow \mathbb{R}^d$ and any facet $F$ of $\Delta$, $p(F)$ spans a $(d-1)$-dimensional affine subspace.
Hence, by Lemma~\ref{lem:gluing} and Claim~\ref{claim:star}, $G(\Delta)=\bigcup_{v \in U} G(\st_\Delta(v))$ is $(\kappa,\bm{a})$-sparse rigid.
\end{proof}

The assumption $\bm{a}\neq(d-1),(1,d-1)$ in Theorem~\ref{thm:C_d} is necessary.
Cook et al.~\cite{3-polytope} showed that there is a $(2,1)$-balanced simplicial $2$-sphere $\Delta$ and its $(2,1)$-coloring $\kappa$ such that $G(\Delta)$ is not $(\kappa,\bm{a})$-sparse rigid.
Based on the construction given in \cite{3-polytope}, for each $d\geq 3$, we give a $(d-1,1)$-balanced simplicial $(d-1)$-sphere $\Delta$ and a $(d-1,1)$-coloring $\kappa$ of $\Delta$ such that $G(\Delta)$ is not $(\kappa,(d-1,1))$-sparse rigid.
Moreover, $\dim \ker R(G(\Delta),p)-\binom{d+1}{2}$ can be arbitrarily large for any $(\kappa,(d-1,1))$-sparse point configuration $p$.
\begin{example} \label{eg:(d-1,1)}
Let $\Gamma$ be a stacked $(d-1)$-simplicial sphere.
Let $\Delta$ be the simplicial $(d-1)$-sphere obtained from $\Gamma$ by subdividing all facets of $\Gamma$.
Then $\Delta$ is a stacked $(d-1)$-sphere with $f_0(\Delta)+f_{d-1}(\Delta)$ vertices.
Define $\kappa:V(\Delta) \rightarrow \{1,2\}$ by $\kappa(v)=1$ if $v \in V(\Gamma)$ and $\kappa(v)=2$ otherwise. Then $\kappa$ is a $(d-1,1)$-coloring of $\Delta$.
Let $p:V(\Delta) \rightarrow \mathbb{R}^d$ be a $(\kappa,(d-1,1))$-sparse point configuration of $G(\Delta)$.
As $\Delta$ is a stacked sphere, we have $\dim \ker R(G(\Delta),p)^\top = \dim \ker R(G(\Delta),p)-\binom{d+1}{2}$. 
The subframework $(G(\Gamma),p|_{V(\Gamma)})$ of $(G(\Delta),p)$ is in a $(d-1)$-dimensional affine subspace and has $d f_0(\Gamma)-\binom{d+1}{2}$ edges. Thus the dimension of the linear space of the equilibrium stresses of $(G(\Gamma),p|_{V(\Gamma)})$ is at least $d f_0(\Gamma)-\binom{d+1}{2} - ((d-1)f_0(\Gamma)-\binom{d}{2})=f_0(\Gamma)-d$.
Hence $\dim \ker R(G(\Delta),p)-\binom{d+1}{2}$ can be arbitrarily large by setting $f_0(\Gamma)$ large enough.
\end{example}

\section{Further observations on $L$-sparse rigidity}
We can prove further results similar to Theorem~\ref{thm:a-bal}.
The following proposition treats the case in which $\Delta$ satisfies the weaker condition than $\bm{a}$-balancedness.
\begin{prop}
For $m \geq 1$, let $\bm{a}=(a_1,\ldots,a_m) \in \mathbb{Z}_{> 0}^m$ be a positive integer vector with $a_1 \geq 4$ and $a_i \geq 2$ for all $i \in [m]$. 
Let $\Delta$ be a minimal $(d-1)$-cycle complex, where $d=\sum_{i=1}^m a_i$.
Let $\kappa:V(\Delta) \rightarrow [m]$ be a map such that, for any facet $F$ of $\Delta$, $\left(|F \cap \kappa^{-1}(i)|\right)_i=\bm{a}+\bm{e}_j-\bm{e}_1 \in \mathbb{Z}_{> 0}^m$ holds for some $j \in [m]$.
Then, $G(\Delta)$ is $(\kappa,\bm{a})$-sparse rigid.
\end{prop}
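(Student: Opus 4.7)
The plan is to adapt the proof of Theorem~\ref{thm:a-bal}, using induction on $|V(\Delta)|$ together with Fogelsanger's decomposition (Lemma~\ref{lem:Fog_dec}) and refining the vertex-splitting step via a careful classification of facet types in each $\Delta_i^+$. If $\Delta$ is trivial, then $G(\Delta)=K_d$ and the unique facet $F$ has type $\bm{a}+\bm{e}_j-\bm{e}_1$, whose excess over $\bm{a}$ summed over any subset of colors is at most $1$; hence condition (H) holds on $F$, Lemma~\ref{lem:gen_pos} yields affine independence of the images of $F$ under a generic $L_{\kappa,\bm{a}}$-sparse realization, and $K_d$ is infinitesimally rigid in $\mathbb{R}^d$.

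For the inductive step, I would pick $uv\in E(\Delta)$ with $\kappa(u)=\kappa(v)=1$ (possible since every facet of $\Delta$ has at least $a_1-1\geq 3$ color-$1$ vertices) and apply Lemma~\ref{lem:Fog_dec} to obtain minimal $(d-1)$-cycle complexes $\Delta_1^+,\ldots,\Delta_t^+$. By Lemma~\ref{lem:Fog_dec}(b), every facet of $\Delta_i^+\setminus\Delta$ contains both $u$ and $v$, hence becomes a $(d-2)$-face upon contracting $uv$; consequently every facet of $\Delta_i^+/uv$ is a facet of $\Delta$ and still has type $\bm{a}+\bm{e}_j-\bm{e}_1$, so by the induction hypothesis $G(\Delta_i^+/uv)$ is $(\kappa_i',\bm{a})$-sparse rigid, where $\kappa_i':=\kappa|_{V(\Delta_i^+/uv)}$.

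The crux is to produce a $(d-1)$-subset $C\subseteq N_{G(\Delta_i^+)}(u)\cap N_{G(\Delta_i^+)}(v)$ with $t_\kappa(C+u+v)=\bm{a}+\bm{e}_k$ for some $k\in[m]$, so that Corollary~\ref{cor:hall} lifts rigidity to $\Delta_i^+$. A case analysis based on Lemma~\ref{lem:Fog_dec}(b) shows that the type of any facet of $\Delta_i^+$ containing $u$ and $v$ falls into exactly one of the forms (I) $\bm{a}+\bm{e}_p-\bm{e}_1$ with $p\neq 1$, (II) $\bm{a}$, (III) $\bm{a}+\bm{e}_p-\bm{e}_q$ with $p,q\neq 1$ and $p\neq q$, or (IV) $\bm{a}+\bm{e}_1-\bm{e}_q$ with $q\neq 1$, each of which satisfies (H). Fix a facet $F^*$ of $\Delta_i^+$ containing $u$ and $v$ and set $C:=F^*-u-v+x$ for an $x$ produced by Lemma~\ref{lem:basic}(ii) applied to $F^*-y$, for a carefully chosen $y\in F^*-u-v$. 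When $F^*$ has type (I), take $y$ of color $1$ (possible since $a_1-3\geq 1$ by $a_1\geq 4$), and comparing $t_\kappa(F^*-y+x)=\bm{a}+\bm{e}_p-2\bm{e}_1+\bm{e}_{\kappa(x)}$ against the list (I)--(IV) forces $\kappa(x)=1$ and yields $k=p$; when $F^*$ has type (III) or (IV) with deficit color $q$, take $y$ of color $q$ (possible since $a_q\geq 2$), which similarly forces $\kappa(x)=q$ and yields $k=p$ or $k=1$; when $F^*$ has type (II), any extensor $x$ gives $k=\kappa(x)$. Finally, since each facet of $\Delta_i^+$ satisfies (H), Lemma~\ref{lem:gen_pos} ensures that every facet's image affinely spans a $(d-1)$-dimensional subspace in a generic $(\kappa,\bm{a})$-sparse realization, so Lemma~\ref{lem:gluing} combined with Lemma~\ref{lem:Fog_dec}(d), (e) yields the $(\kappa,\bm{a})$-sparse rigidity of $G(\Delta)=\bigcup_{i=1}^tG(\Delta_i^+)$.

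The hard part is the color-forcing step: once $y$ is chosen in the prescribed color for each facet type of $F^*$, every alternative value of $\kappa(x)$ assigns to $F^*-y+x$ a type outside the list (I)--(IV), a contradiction that pins $\kappa(x)$ to the desired color. The hypotheses $a_1\geq 4$ and $a_i\geq 2$ for all $i$ are precisely what guarantees that the prescribed $y$ exists in $F^*-u-v$ for each of the four facet types, so that the vertex-splitting argument can be carried out in every case.
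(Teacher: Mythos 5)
Your proposal is correct and follows essentially the same route as the paper's proof: induction on $|V(\Delta)|$ with Fogelsanger's decomposition, contracting an edge $uv$ with $\kappa(u)=\kappa(v)=1$, classifying facet types of $\Delta_i^+$ as $\bm{a}+\bm{e}_p-\bm{e}_q$ and forcing $\kappa(x)$ by choosing $w$ (your $y$) in the deficit color, using exactly the hypotheses $a_1\geq 4$ and $a_i\geq 2$ where you use them, and gluing via Lemmas~\ref{lem:gen_pos} and~\ref{lem:gluing}. The only nitpick is that a facet $F-v+u$ of $\Delta_i^+/uv$ coming from a facet $F\ni v$, $F\not\ni u$ of $\Delta$ need not itself be a facet of $\Delta$, but since $\kappa(u)=\kappa(v)$ its type is unchanged, which is all the induction requires.
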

\begin{proof}
The proof is similar to that of Theorem~\ref{thm:a-bal}.
Define a type $t_\kappa(U)$ of $U \subseteq V(\Delta_i^+)$ as $(|U \cap \kappa^{-1}(i)|)_i \in \mathbb{Z}_{\geq 0}^m$.
Note that by Lemma~\ref{lem:gen_pos}, for a generic $(\kappa,\bm{a})$-sparse point configuration $p:V(\Delta)\rightarrow \mathbb{R}^d$ and $U \subseteq V(\Delta)$ with $|U|=d+1$, $p(U)$ is affinely independent if and only if $t_\kappa(U)=\bm{a}+\bm{e}_j$ for some $j \in [m]$.

The proof is done by the induction on $|V(\Delta)|$. As a base case, if $\Delta$ is a trivial minimal $(d-1)$-cycle complex, $G(\Delta)=K_d$ and by Lemma~\ref{lem:gen_pos}, for a generic $(\kappa,\bm{a})$-sparse point configuration $p:V(\Delta)\rightarrow \mathbb{R}^d$, $p(V(\Delta))$ spans a $(d-1)$-dimensional affine space. 
Hence $G(\Delta)$ is $(\kappa,\bm{a})$-sparse rigid.

Suppose that $\Delta$ is a nontrivial minimal $(d-1)$-cycle complex. Pick $u,v \in \kappa^{-1}(1)$ with $uv \in E(G)$, which exist by $a_1 \geq2$. Let $\Delta_1^+,\ldots,\Delta_t^+$ be the nontrivial minimal $(d-1)$-cycle complexes given in Lemma~\ref{lem:Fog_dec} with respect to $\Delta$ and $uv$.
\begin{claim} \label{claim:nonbal}
For each $i$, there is a $(d-1)$-set $C \subseteq N_{G(\Delta_i^+)}(u) \cap N_{G(\Delta_i^+)}(v)$ such that $t_\kappa(C+u+v)=\bm{a}+\bm{e}_k$ for some $k \in [m]$.
\end{claim}
\begin{proof}[Proof of claim]
Pick a facet $F^* \in \Delta_i^+$ containing $u$ and $v$. 
For every $w \in F^*-u-v$, there is $x (\neq w)$ such that $F^*-w+x$ is a facet of $\Delta_i^+$.
We show that for an appropriate choice of $w \in F^*-u-v$, $C:=F^*+x-u-v$ satisfies $t_\kappa(C+u+v)=\bm{a}+\bm{e}_j$ for some $j \in [m]$.

By Lemma~\ref{lem:Fog_dec} (b), for every facet $F$ of $\Delta_i^+$ containing $v$, $t_\kappa(F-v)=\bm{b}-\bm{e}_j$ for some $j \in [m]$ and some $\bm{b} \in \{\bm{a}+\bm{e}_k-\bm{e}_1: k \in [m]\}$. 
Hence, we have the following:
\begin{quote}
($\star$) for every facet $F$ of $\Delta_i^+$ containing $v$, $t_\kappa(F)=\bm{a}+\bm{e}_k-\bm{e}_j$ for some $k,j \in [m]$.
\end{quote}
By the property ($\star$) of $F^*$, $t_\kappa(F^*)=\bm{a}+\bm{e}_k-\bm{e}_j$ for some $k,j \in [m]$.
If $k=j$, for any choice of $w$, we have $t_\kappa(C+u+v)=\bm{a}+\bm{e}_{\kappa(x)}$ as desired.
Thus assume that $k \neq j$.
If $j=1$, pick $w \in F^*-u-v$ with $\kappa(w)=1$. Note that such $w$ exists by $a_1 \geq 4$.
Then for $F^*-w+x$ to satisfy $(\star)$, $\kappa(x)$ must be $1$, and we get $t_\kappa(C+u+v)=\bm{a}+\bm{e}_k$ as desired.
If $j \neq 1$, by $a_j \geq 2$, we can pick $w \in F-u-v$ with $\kappa(w)=j$. Then from the property $(\star)$ of $F^*-w+x$, we have $\kappa(x)=j$. Thus we get $t_\kappa(C+u+v)=\bm{a}+\bm{e}_k$ as desired.
\end{proof}

The restriction of $\kappa$ to $V(\Delta_i^+)$ (resp. $V(\Delta_i^+/uv)$) satisfies the assumption for $\Delta_i^+$ (resp. $\Delta_i^+/uv$).
Hence by the same argument as the proof of Theorem~\ref{thm:a-bal}, from Claim~\ref{claim:nonbal}, we can deduce that $G(\Delta)$ is $(\kappa,\bm{a})$-sparse rigid by Lemma~\ref{lem:gluing}, Lemma~\ref{lem:vertex splitting}, Lemma~\ref{lem:Fog_dec} (d) and (e).
\end{proof}

The following proposition asserts that the assumption $a_i \geq 2$ for all $i$ in Theorem~\ref{thm:a-bal} can be weakened to $a_1 \geq 2$ (and no assumption on $a_i$ for $i \neq 1$) if vertices in $\kappa^{-1}(1)$ are allowed to have full support.
\begin{prop}
Let $d > a \geq 2$ be integers and let $\bm{b} \in \mathbb{Z}_{>0}^m$ be an integer vector with $d-a=\sum_{i=1}^m b_i$.
Let $\Delta$ be a minimal $(d-1)$-cycle complex and $X \subseteq V(\Delta)$ be a $(\geq a)$-transversal set of $\Delta$. 
Let $\kappa:V(\Delta) \setminus X \rightarrow [m]$ be a map satisfying $|F \cap \kappa^{-1}(i)| \leq b_i$ for every $F \in \Delta$ and $i \in [m]$.

Let $I_1,\ldots,I_m$ be disjoint subsets of $[d]$ with $|I_i|=b_i$ for $i \in [m]$ and define a map $L^*:V(\Delta) \rightarrow 2^{[d]}$ by $L^*(v)=[d]$ if $v \in X$ and $L^*(v)=I_{\kappa(v)}$ if $v\in V(\Delta) \setminus X$.
Then $G(\Delta)$ is $L^*$-sparse rigid.
\end{prop}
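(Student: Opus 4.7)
The plan is to follow the Fogelsanger-style template of Theorem~\ref{thm:a-bal} almost verbatim, replacing Corollary~\ref{cor:hall} by the more general Lemma~\ref{lem:hall}, and to induct on $|V(\Delta)|$. For $U \subseteq V(\Delta) \setminus X$, I write $t_\kappa(U) := (|U \cap \kappa^{-1}(i)|)_{i \in [m]}$ and use $\leq$ for the componentwise order on $\mathbb{Z}^m$. For the base case $\Delta$ is trivial, so $G(\Delta) = K_d$ and $|V(\Delta)| = d$; the Hall condition (H) on $V(\Delta)$ is then automatic, because any $W \subseteq V(\Delta)$ meeting $X$ satisfies $\bigcup_{v \in W} L^*(v) = [d]$, and for $W \subseteq V(\Delta) \setminus X$ the inequality $|W| \leq \sum_{i : W \cap \kappa^{-1}(i) \neq \emptyset} b_i = |\bigcup_{v \in W} L^*(v)|$ follows from $t_\kappa(W) \leq \bm{b}$. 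Lemma~\ref{lem:gen_pos} then places a generic $L^*$-sparse configuration in affine general position, so $K_d$ is $L^*$-sparse rigid.

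For the inductive step, I use that $a \geq 2$ together with the $(\geq a)$-transversality of $X$ to choose an edge $uv \in E(\Delta)$ with $u, v \in X$, and apply Lemma~\ref{lem:Fog_dec} to obtain minimal cycle complexes $\Delta_1^+, \ldots, \Delta_t^+$. For each $\Delta_i^+$ I verify that the hypothesis of the proposition is inherited by $(\Delta_i^+, X \cap V(\Delta_i^+), \kappa|_{V(\Delta_i^+) \setminus X})$: facets of $\Delta_i^+ \cap \Delta$ inherit both $|F \cap X| \geq a$ and $t_\kappa(F \setminus X) \leq \bm{b}$ immediately, while for $F \in \Delta_i^+ \setminus \Delta$ one uses Lemma~\ref{lem:Fog_dec}(b): $F - v$ is a $(d-2)$-face of $\Delta$ that extends to some facet $G \in \Delta$, and transferring the hypotheses along the single extra vertex of $G$ handles both conditions (splitting on whether that extra vertex lies in $X$). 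The same verification applies to $\Delta_i^+ / uv$ (a minimal cycle complex by Lemma~\ref{lem:Fog_dec}(c)) with $(X \cap V(\Delta_i^+)) - v$, so the induction hypothesis applies there.

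The key step is producing $C \subseteq N_{G(\Delta_i^+)}(u) \cap N_{G(\Delta_i^+)}(v)$ of size $d-1$ such that $C + u + v$ satisfies (H), so that Lemma~\ref{lem:hall}---applied with $L(u) = L(v) = [d]$---upgrades the $L^*$-sparse rigidity of $G(\Delta_i^+ / uv)$ to that of $G(\Delta_i^+)$. Mimicking Claim~\ref{claim:a-bal}, I take a facet $F^* \ni u, v$ (Lemma~\ref{lem:Fog_dec}(a)), pick any $w \in F^* - u - v$ (nonempty since $d \geq 3$), obtain $x$ with $F^* - w + x$ a facet (Lemma~\ref{lem:basic}(ii)), and set $C := F^* - u - v + x$; as in Claim~\ref{claim:a-bal}, $C \subseteq N(u) \cap N(v)$. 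Verifying (H) is in fact simpler here than in Theorem~\ref{thm:a-bal}: since $L^*(u) = L^*(v) = [d]$, only subsets $W \subseteq C \setminus X = (F^* \setminus X) \cup (\{x\} \setminus X)$ can be binding, and $t_\kappa(F^* \setminus X) \leq \bm{b}$ forces $t_\kappa(C \setminus X)$ to exceed $\bm{b}$ in at most one coordinate by at most one, which is exactly (H). No careful choice of $w$ is needed, unlike in Claim~\ref{claim:a-bal}. Finally, for a generic $L^*$-sparse $p$ and every facet $F$ of each $\Delta_i^+$, $p(F)$ affinely spans $\mathbb{R}^d$ (by Lemma~\ref{lem:gen_pos} and $t_\kappa(F \setminus X) \leq \bm{b}$), so iterated application of Lemma~\ref{lem:gluing} together with Lemma~\ref{lem:Fog_dec}(d),(e) assembles the pieces into $L^*$-sparse rigidity of $G(\Delta)$. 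The main obstacle I anticipate is the bookkeeping of hypothesis inheritance through the Fogelsanger decomposition; the Hall verification itself---the delicate step in earlier proofs---becomes straightforward here because the contracted vertices lie in $X$ and so carry full support.
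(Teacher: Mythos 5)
Your proof is correct and follows essentially the same route as the paper's: induction on $|V(\Delta)|$, contracting an edge $uv$ with $u,v\in X$, applying the Fogelsanger decomposition, verifying the Hall condition (H) for $C+u+v=F^*+x$ with an arbitrary $w$, and assembling via Lemma~\ref{lem:hall} and Lemma~\ref{lem:gluing}. The paper's write-up is terser (it delegates the hypothesis inheritance and final gluing to ``the same argument as Theorem~\ref{thm:a-bal}''), but your key observation --- that the Hall verification trivializes because the contracted vertices lie in $X$ and carry full support $[d]$ --- is exactly what the paper's proof exploits.
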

\begin{proof}
For $U \subseteq V(\Delta)$, let $t(U):=(|U \cap X|,|U \cap \kappa^{-1}(1)|, \ldots,|U \cap \kappa^{-1}(m)|) \in \mathbb{Z}_{\geq 0}^{m+1}$.
The assumption on $X$ and $\kappa$ is that, for each facet $F$ of $\Delta$, $t(F)=(a+\sum_{i=1}^m d_i,\bm{b}-\bm{d})$ for some $\bm{d} \in \mathbb{Z}_{\geq 0}^m$ with $\bm{d} \leq \bm{b}$.
One can check that a $(d+1)$-set $U \subseteq V(\Delta)$ satisfies the condition (H) for $L^*$ if $t(U)=(a+\sum_{i=1}^m d_i,\bm{b}-\bm{d})+\bm{e}_j$ for some $j \in [m+1]$ and some $\bm{d} \in \mathbb{Z}_{\geq 0}^m$ with $\bm{d} \leq \bm{b}$.

The proof is done by the induction on $|V(\Delta)|$. When $\Delta$ is trivial, the statement easily follows.
Suppose that $\Delta$ is a nontrivial minimal $(d-1)$-cycle complex. Pick $u,v \in X$ with $uv \in E(\Delta)$, which exist by $a \geq 2$.
Let $\Delta_1^+,\ldots,\Delta_t^+$ be the nontrivial minimal $(d-1)$-cycle complexes given in Lemma~\ref{lem:Fog_dec} with respect to $\Delta$ and $uv$.
\begin{claim}
For each $i$, there exists $C \in N_{G(\Delta_i^+)}(u) \cap N_{G(\Delta_i^+)}(v)$ such that $C+u+v$ satisfies the condition (H) for $L^*$.
\end{claim}
\begin{proof}[Proof of claim]
For each facet $F$ of $\Delta_i^+$ containing $v$, we have $t(F)=(a+\sum_{i=1}^m d_i,\bm{b}-\bm{d})$ for some $\bm{d} \in \mathbb{Z}_{\geq 0}^m$ with $\bm{d} \leq \bm{b}$.
Pick $F^* \in \Delta_i^+$ with $u,v  \in F^*$, and then pick $w \in F^*-u-v$.
Then there is $x (\neq w)$ such that $F^*+x-w$ is a facet of $\Delta_i^+$.
Now $C:=F^*+x-u-v$ is the desired set.
\end{proof}
By the similar argument to the proof of Theorem~\ref{thm:a-bal}, the $L^*$-sparse rigidity of $G(\Delta)$ is deduced.
\end{proof}

\paragraph{Acknowledgement}
I would like to thank my advisor Shin-ichi Tanigawa for helpful discussions and helpful comments on the presentation.

\bibliographystyle{plain}
\bibliography{myreference}

\end{document}